\definecolor{lava}{rgb}{0.81, 0.06, 0.13}
\definecolor{newblue}{rgb}{0.2, 0.3, 0.85}
\definecolor{mygreen}{rgb}{0.0, 0.5, 0.0}
\numberwithin{equation}{section}
\newcommand{\N}{\mathbb{N}}
\newcommand{\R}{\mathbb{R}}
\renewcommand{\S}{\mathbb{S}}
\renewcommand{\epsilon}{\varepsilon}
\renewcommand{\theta}{\vartheta}
\renewcommand{\rho}{\varrho}
\renewcommand{\phi}{\varphi}
\newcommand{\de}{\,{\rm d}}
\renewcommand{\d}{{\rm d}}
\newcommand{\st}{\ensuremath{\ :\ }} 
\newcommand{\mres}{\mathbin{\vrule height 1.6ex depth 0pt width
0.13ex\vrule height 0.13ex depth 0pt width 1.3ex}}
\newcommand{\grad}{\nabla}
\newcommand{\hausdorff}{\mathcal{H}}
\newcommand{\symmdiff}{\Delta}
\newcommand{\eps}{\epsilon}
\theoremstyle{plain}
\newtheorem{theorem}{Theorem}[section]
\newtheorem{proposition}[theorem]{Proposition}
\newtheorem{lemma}[theorem]{Lemma}
\newtheorem{corollary}[theorem]{Corollary}
\theoremstyle{remark}
\newtheorem{remark}[theorem]{Remark}
\theoremstyle{definition}
\newtheorem{definition}[theorem]{Definition}
\DeclareMathOperator{\diam}{diam}
\DeclareMathOperator{\divergence}{div}
\DeclareMathOperator{\Id}{Id}
\DeclareMathOperator{\barycenter}{bar}
\DeclareMathOperator{\iinterior}{int}
\newcommand{\interior}[1]{\iinterior\left(#1\right)}
\begin{document}


\title[Quantitative isoperimetric inequalities in capillarity problems and cones]{Quantitative isoperimetric inequalities in capillarity problems and cones in strong and barycentric forms}

\author{Davide Carazzato}
\address{Faculty of Mathematics, University of Vienna, Oskar-Morgenstern-Platz 1, 1090 Vienna, Austria} \email{davide.carazzato@univie.ac.at \orcidlink{0000-0001-7511-6698}}

\author{Giulio Pascale}
\address{Dipartimento di Matematica e Applicazioni ``Renato Caccioppoli'', Universit\'a degli Studi di Napoli ``Federico II'', via Cintia - Monte Sant'Angelo, 80126 Napoli, Italy} \email{giulio.pascale@unina.it \orcidlink{0000-0003-1680-3425}}

\author{Marco Pozzetta}
\address{Politecnico di Milano, Via Bonardi 9, 20133 Milano, Italy} \email{marco.pozzetta@polimi.it \orcidlink{0000-0002-2757-0826}}

\date{\today}

\begin{abstract}
We study quantitative isoperimetric inequalities for two different perimeter-type functionals. We first consider classical capillarity functionals, which measure the perimeter of sets in a Euclidean half-space, assigning a constant weight $\lambda \in (-1, 1)$ to the portion of the boundary that lies on the boundary of the half-space. 
We then consider the relative perimeter of sets contained in some suitable convex cone in the Euclidean space.


In both settings, we establish sharp quantitative isoperimetric inequalities in the so-called strong form.
More precisely, we show that the isoperimetric deficit of a competitor not only controls the Fraenkel asymmetry, but it also controls an oscillation asymmetry that measures how much the unit normals to the boundary of a competitor deviate from those of an isoperimetric set.\\
Our technique is also able to explicitly identify a center that can be employed to compute the asymmetries. In particular, we also derive barycentric versions of these quantitative isoperimetric inequalities (both in the classical and in the strong form).

The proofs are based on the derivation of Fuglede-type estimates for graphs defined on general spherically convex domains, in combination with a new application of the selection principle that directly provides the inequalities in barycentric form.
\end{abstract}
\subjclass{Primary: 49J40, 49Q10. Secondary: 49Q20, 28A75, 52A40}
\keywords{Isoperimetric problem, capillarity, convex cones, quantitative isoperimetric inequality, strong form of quantitative isoperimetric inequality, barycentric asymmetry, selection principle}

\maketitle

\setcounter{tocdepth}{1} 
\vspace{-1.2cm}
\tableofcontents

\vspace{-1.7cm}
\section{Introduction}

The study of quantitative stability of geometric and functional inequalities, and in particular of isoperimetric inequalities, represents a wide research line in Calculus of Variations in the last thirty years. The sharp quantitative isoperimetric inequality for the classical perimeter in $\R^n$ was first proved in \cite{FuscoMaggiPratelli}, after \cite{Fuglede, HallQuantitative, HallHaymanWeitsman}, and it states that
\begin{equation}\label{eq:zzClassicalQuantIsop}
\inf_{x_0 \in \R^n} |E \Delta B_1(x_0)|^2 \le C(n) \big[ P(E) - P(B_1) \big],
\end{equation}
for any measurable set $E\subset \R^n$ with $|E|=|B_1|$, where $P(\cdot)$ denotes the perimeter functional, and $|\cdot|$ denotes Lebesgue measure. 
The left hand side in \eqref{eq:zzClassicalQuantIsop} is the so-called Fraenkel asymmetry with respect to Euclidean balls. After \cite{FuscoMaggiPratelli}, many other proofs of the quantitative inequality have been found; in fact, quantitative isoperimetric inequalities have been proved for several different notions of perimeter functionals, such as anisotropic, weighted, or non-local perimeters \cite{FigalliMaggiPratelli, CicaleseLeonardi, AcerbiFuscoMorini, FuscoJulin, BarchiesiBrancoliniJulin, FigalliFuscoMaggiMillotMorini, CianchiFuscoMaggiPratelli, FigalliIndrei, CintiGlaudoPratelliRosOtonSerra2022}. We refer the interested reader to the very nice survey \cite{FuscoDispensa} and to references therein.\\
While \eqref{eq:zzClassicalQuantIsop} bounds from above an $L^1$-notion of distance of a competitor from the set of minimizers in terms of the perimeter deficit, it is well-known since \cite{Fuglede} that if a competitor $E$ is a sufficiently small $C^1$-graphical perturbation of a ball in terms of a function $u:\S^{n-1}\to \R$ that parametrizes $\partial E$ over the sphere, then the left hand side in \eqref{eq:zzClassicalQuantIsop} can be replaced by the $H^1$-norm of $u$. More recently, in \cite{FuscoJulin} the authors managed to prove that \eqref{eq:zzClassicalQuantIsop} can be, in fact, improved in order to bound a higher order notion of distance of a competitor from the set of minimizers in terms of its perimeter deficit. Such an inequality represents a so-called \emph{strong form} of the quantitative isoperimetric inequality and reads
\begin{equation}\label{eq:zzQuantFuscoJulin}
    \inf \left\{ \int_{\partial^*E}\left|\nu_E(x) - \frac{x - x_0}{|x - x_0|}\right|^2 \de \hausdorff^{n - 1}(x) \st x_0 \in \R^n \right\} \le C(n) \big[ P(E) - P(B_1) \big],
\end{equation}
for any measurable set $E\subset \R^n$ with $|E|=|B_1|$, where $\partial^*E$ denotes the reduced boundary of $E$, and $\nu_E$ denotes its generalized outward unit normal, see \cref{sec:SetsFinitePerimeter}. 
The left hand side in \eqref{eq:zzQuantFuscoJulin} is an oscillation asymmetry that measures how much the unit normals of a competitor deviate from those of an isoperimetric sets, see \cref{fig:asymmetry}. 
Roughly speaking, it represents an $H^1$-notion of distance of a general competitor from the set of minimizers.\\
Further quantitative inequalities in strong form have been obtained in \cite{Neumayer16} for anisotropic perimeters, in \cite{DeMason24} in the case of crystalline perimeters, and in \cite{BogeleinDuzaarFusco, BogeleinDuzaarScheven} for the problem set on the sphere or on the hyperbolic space. We will compare these works with ours below. We also mention that in \cite{BarchiesiBrancoliniJulin} it is proved a dimension-free inequality in strong form in the Gaussian space.
%

Another possible strengthening of inequalities \eqref{eq:zzClassicalQuantIsop}, \eqref{eq:zzQuantFuscoJulin} is represented by \emph{barycentric forms} of quantitative isoperimetric inequalities. In \cite{GambicchiaPratelli2025}, after \cite{BianchiniCroceHenrot2023}, it is proved that
\begin{equation}\label{eq:GambicchiaPratelli}
|E \Delta B_1(b_E)|^2 \le C(n,d) \big[ P(E) - P(B_1) \big],
\end{equation}
for any measurable set $E\subset \R^n$ with $|E|=|B_1|$ and diameter ${\rm diam}(E)\le d$, where $b_E$ is the barycenter of $E$. The dependence of the constant in \eqref{eq:GambicchiaPratelli} on $d$ cannot be removed, but the left hand side gets a remarkable improvement compared to \eqref{eq:zzClassicalQuantIsop} as the asymmetry is computed with respect to a unique ball, centered at a well-determined point.

\medskip

In this paper, by proposing a unifying approach, we will prove quantitative isoperimetric inequalities in both \emph{strong and barycentric forms} for the isoperimetric problems defined by two perimeter-type functionals: (i) the classical capillarity functional in a half-space, and (ii) the standard (relative) perimeter in a convex cone. To our knowledge, this is the first instance where a \emph{strong} quantitative isoperimetric inequality is also proved in barycentric form.

\medskip

The capillarity functional with parameter $\lambda\in(-1,1)$ in a half-space $H:=\{x_n\ge 0\}\subset \R^n$ is defined by
\[
    P_{\lambda}(E) := P(E,\{x_n>0\})-\lambda P(E,\{x_n=0\}),
\]
for any set $E\subset H$ of finite perimeter.
It is well-known that isoperimetric sets for the capillarity functional are given by (translated rescalings of) the bubble $B^{\lambda}:=(B_1(0)-\lambda e_n)\cap H$, where $e_n:=(0,\ldots,0,1)\in\R^n$ (see \cite[Section 19, Theorem 19.21]{MaggiBook}, \cite{Finn1986} and the classical works \cite{GonzalezMassariTamanini, Giusti1981, Tamanini}). The latter minimality property comes with the sharp and rigid isoperimetric inequality
\begin{equation}\label{eq:zzIsoperimetricaCapillare}
    P_{\lambda}(E)\geq  n|B^\lambda|^{\frac1n} |E|^{\frac{n-1}{n}}\qquad \forall E\subset H, \quad |E|<
    \infty.
\end{equation}
Capillarity-type problems still attract a remarkable attention. We mention here recent works on the fine regularity of almost minimizers and varifolds \cite{DePhilippisMaggi2015, ChodoshEdelenLi, DeMasiChodoshGasparettoLi}, and recent developments on sharp and rigid relative and capillarity isoperimetric inequalities outside convex bodies \cite{ChoeGhomiRitoreInequality, FuscoMorini, LiuWangWeng, Krummel, FuscoJulinMorini}.
A quantitative version of \eqref{eq:zzIsoperimetricaCapillare} for the Fraenkel asymmetry in the spirit of \eqref{eq:zzClassicalQuantIsop} has been recently proved in \cite{PascalePozzettaQuantitative, KreutzSchmidt24} (see \cref{thm:PPquantitativaFraenkelCapillare} below).

The next theorem contains our main results about the capillarity isoperimetric problem, that is the natural analogue of the previously known quantitative isoperimetric inequalities in strong form (see \cref{fig:asymmetry}).

\begin{theorem}[{Strong and barycentric quantitative inequalities in capillarity problems -- cf. \cref{thm:capillarityStrongQuantitative}, \cref{cor:bar-quantitative}, \cref{cor:barycentric-Fraenkel}}]\label{thm:MAINcapillarity}
Let $n \in \N$ with $n\ge 2$, $\lambda \in (-1,1)$. There exists $C_{\ref{thm:MAINcapillarity}}>0$ depending on $n,\lambda$ such that for any set of finite perimeter $E \subset \{x_n\ge 0\}\subset \R^n$ with $|E|=|B^\lambda|$ there holds
\begin{equation*}
    \inf\left\{\int_{\partial^* E \cap \{x_n>0\}} \left| \nu_E(x) - \frac{x-(x',-\lambda)}{|x-(x',-\lambda)|} \right|^2 \de \hausdorff^{n-1} (x) \st x' \in\{x_n=0\}
\right\} \le 
    C_{\ref{thm:MAINcapillarity}} \big[P_\lambda(E)- P_\lambda(B^\lambda)\big].
\end{equation*}
Moreover, for any $d>\diam_H B^{\lambda}$ there exists a constant $C_{\ref{thm:MAINcapillarity}}'>0$ depending on $n$, $\lambda$ and $d$ such that for every set of finite perimeter $E\subset \{x_n\ge 0\}\subset \R^n$ with $|E|=|B^\lambda|$ and $\diam_H E\leq d$ there holds
    \begin{equation*}
        |E\symmdiff (B^{\lambda}+\barycenter_H E)|^2 + \int_{\partial^*E\cap \{x_n>0\}} \left|\nu_E(x)-\frac{x-(\barycenter_H E,-\lambda)}{|x-(\barycenter_H E,-\lambda)|}\right|^2\de\hausdorff^{n-1}(x) \leq C_{\ref{thm:MAINcapillarity}}' \big[P_\lambda(E)- P_\lambda(B^\lambda)\big],
    \end{equation*}
where $\diam_H E$ is the diameter of the projection of $E$ onto $\{x_n=0\}$, and $\barycenter_H E:= \tfrac{1}{|E|} \int_E (x_1,\ldots, x_{n-1}) $.
\end{theorem}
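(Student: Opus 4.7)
The plan is to prove both statements by a contradiction/selection-principle argument, reducing to a Fuglede-type estimate for normal graphs over the spherical cap $\Sigma:=\partial B^{\lambda}\cap\{x_n>0\}$, viewed as a spherically convex subset of $\S^{n-1}$ centered at $-\lambda e_n$. The novelty required for the second inequality is a selection principle in which the freedom of horizontal translation is replaced by a penalization forcing the barycenter to stay pinned.

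\textbf{Fuglede-type expansion on the cap.} For $E\subset H$ close to $B^{\lambda}$ with $|E|=|B^{\lambda}|$ whose free boundary $\partial^*E\cap\{x_n>0\}$ is the normal graph over $\Sigma$ of a small $u\colon\Sigma\to\R$, a second-order expansion of the capillarity functional yields
\[
    P_{\lambda}(E)-P_{\lambda}(B^{\lambda})\ \geq\ c\,\|u\|_{H^{1}(\Sigma)}^{2}+o(\|u\|_{C^{1}}^{2}).
\]
The capillarity contact-angle condition on $\partial \Sigma$ becomes a Neumann-type condition for $u$; the volume constraint kills the constant mode; the $(n-1)$ further nontrivial modes of the associated Jacobi operator correspond exactly to horizontal translations of $B^{\lambda}$. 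A parallel expansion shows that the oscillation asymmetry with center $(x',-\lambda)$ and the $L^{1}$-asymmetry against $B^{\lambda}+x'$ are both controlled by $\|u\|_{H^{1}(\Sigma)}^{2}+|x'|^{2}$, so choosing $x'$ equal to the projection of $u$ onto the translation modes yields sharp two-sided control.

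\textbf{Strong form, first inequality.} Argue by contradiction: a sequence $E_k$ with $|E_k|=|B^{\lambda}|$ and deficit $\to 0$ would have infimal oscillation asymmetry larger than $k$ times the deficit. The classical Cicalese--Leonardi selection principle, adapted to the capillarity functional, replaces $E_k$ with a minimizer $F_k$ of a penalized functional of the type $P_{\lambda}+\Lambda\big||\cdot|-|B^{\lambda}|\big|+\varepsilon_k\cdot(\text{oscillation asymmetry})$ and, after a horizontal translation, produces $L^{1}$-convergence $F_k\to B^{\lambda}$. Boundary $\varepsilon$-regularity for almost-minimizers of the capillarity functional (as in the references cited in the introduction) upgrades this to $C^{1,\alpha}$-convergence on the free boundary, so $F_k$ becomes a normal graph of $u_k\to 0$ in $C^{1,\alpha}$. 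Applying the Fuglede expansion to $F_k$ contradicts the failure assumption.

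\textbf{Barycentric form.} Here the selection principle is redesigned: minimize a functional of the form $P_{\lambda}+\Lambda\big||\cdot|-|B^{\lambda}|\big|+M|\barycenter_H\cdot|+\varepsilon_k\cdot(\text{sum of the two asymmetries centered at }(\barycenter_H\cdot,-\lambda))$ among sets with $\diam_H\leq d$. The diameter bound serves two roles: it gives compactness of the minimizing sequence (preventing horizontal mass escape) and makes the map $E\mapsto\barycenter_H E$ continuous under $L^{1}$-convergence; thus the limit is again $B^{\lambda}$ with $\barycenter_H=0$. Regularity produces the graphical representation, and since by construction $|\barycenter_H F_k|=O(\|u_k\|_{L^{1}})$, the Fuglede estimate applied with $x'=\barycenter_H F_k$ simultaneously controls both the oscillation and the $L^{1}$ contribution on the left-hand side, contradicting the failure assumption.

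\textbf{Main obstacle.} The delicate point is the barycentric selection principle: one must check that adding the barycenter penalization $M|\barycenter_H\cdot|$ preserves enough almost-minimality for boundary regularity to apply, tune $M$, $\Lambda$, $\varepsilon_k$ so that a finite-dimensional minimization survives the limit, and verify that the horizontal translation modes in the Fuglede expansion are compatible with the barycentric constraint (so that setting $x'=\barycenter_H F_k$ produces a quadratic remainder that the $\varepsilon_k$-penalization can absorb). The Fuglede estimate on the spherically convex cap with Neumann data is also nontrivial: the identification of the translation modes in the spectrum and the proof of a sharp Poincaré inequality orthogonal to them are the technical heart needed to obtain a strong form rather than an $L^{2}$ one.
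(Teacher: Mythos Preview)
Your high-level architecture (Fuglede estimate + selection principle) matches the paper, but both halves diverge from what the paper actually does, and at least one of your choices runs into a genuine obstruction that the paper explicitly identifies.

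\textbf{Fuglede step.} You propose to parametrize $E$ as a normal graph over $\Sigma=\partial B^{\lambda}\cap\{x_n>0\}$ and run a spectral argument (Jacobi operator, translation modes, Poincar\'e orthogonal to them). The paper rules this out on two grounds. First, normal graphs over $B^{\lambda}$ are too restrictive to mesh with the selection principle: the regularity theory delivers sets whose contact curve with $\partial H$ need not coincide with $\partial\Sigma$. Second, the spectrum of the Laplacian on the cap with the relevant boundary data is not available, so the ``sharp Poincar\'e orthogonal to translations'' is precisely the missing ingredient. The paper instead parametrizes both $B^{\lambda}$ and $E$ as radial graphs over $\S^{n-1}_+$ (via the non-constant function $w_\lambda$), Taylor-expands $P_\lambda$ directly, and replaces the spectral gap by the already-known quantitative Fraenkel inequality (\cref{thm:PPquantitativaFraenkelCapillare}) combined with a Nash-type interpolation. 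Your Fuglede estimate, as stated, would require you to supply exactly the spectral information the authors say they do not have.

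\textbf{Selection step.} Your auxiliary functional penalizes with the \emph{translation-invariant} oscillation asymmetry $\mu_\lambda^2$. The paper devotes a full discussion (\cref{sec:StrategyProof}) to why this fails: rewriting $\mu_\lambda^2$ via the divergence theorem produces an anisotropic perimeter whose density depends on the optimal center $x'=x'(F)$, hence on $F$ itself, and this set-dependent integrand is outside the reach of the available $\varepsilon$-regularity theory. The paper's fix is to penalize with the \emph{fixed-center} quantity $\mu_{\lambda,0}^2$ and simultaneously add a bounded nonlinear barycenter term $|\widetilde b(F)|^2$; this gives a genuine weighted-anisotropic almost-minimality to which the De~Philippis--Maggi regularity applies. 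A second issue you do not address is existence of minimizers: your functional is defined on all sets of finite measure (for the first inequality) with no confinement, and the paper shows minimizers may fail to exist (immediate for $\lambda=0$). Their remedy is Ekeland's variational principle in the $L^1$ metric, producing almost-minimizers with uniform quasi-minimality and density estimates, which then yield confinement a posteriori. Your barycentric version with the hard constraint $\diam_H\le d$ is a reasonable alternative for compactness, but you would still need to explain why the full oscillation term does not spoil regularity, and the paper's analysis suggests it does.
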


Inequalities in \cref{thm:MAINcapillarity} are stated here for sets having fixed volume $|E|=|B^\lambda|$, but there clearly hold scaling invariant counterparts of such inequalities by normalizing the quantities involved in terms of the suitable power of $|E|$ (see \cref{thm:capillarityStrongQuantitative}, \cref{cor:bar-quantitative}, \cref{cor:barycentric-Fraenkel}).\\
In fact, we will also prove a barycentric version of the strong quantitative inequality for capillarity problems for possibly \emph{unbounded} sets, replacing $\barycenter_H E$ (whose definition could be ill posed in this case) with a nonlinear notion of barycenter, well-defined for any set of finite measure, thus getting an inequality with a constant independent of the diameter of the competitor. Such a quantitative inequality is stated in \cref{cor:tilde-bar-quantitative}. See also \cref{rem:MoreBarycentricInequlities} for additional comments.\\
We remark that the exponents in the inequalities in \cref{thm:MAINcapillarity} are sharp, as we show in \cref{subsec:sharp}. We mention that one may try to combine \cite{Neumayer16} with \cite{KreutzSchmidt24}, where a connection between capillarity perimeters and anisotropic perimeters is found, to obtain a quantitative isoperimetric inequality in strong form for the capillarity problem. However, since the anisotropic perimeter corresponding to the capillarity one is non-smooth, one would get a non-sharp exponent in the final inequality (see \cite[Theorem~1.1]{Neumayer16}). More importantly, the oscillation asymmetry obtained following such an approach would lack of a clear geometric interpretation. In fact, as we comment below, our oscillation asymmetry is substantially different from that considered in the sharp strong quantitative inequality in \cite[Theorem 1.3]{Neumayer16}.

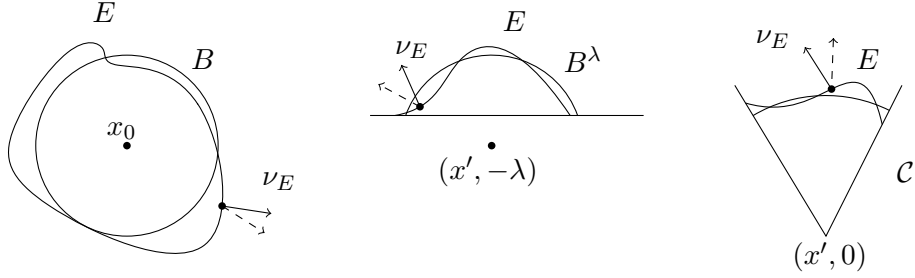
\begin{figure}[H]
    \centering
    \begin{tikzpicture}[scale=0.025]
      \draw (80, 64) circle[radius=48];
      \draw
        (117.3333, 86)
         .. controls (130.6667, 65.3333) and (137.3333, 22.6667) .. (120, 10.6667)
         .. controls (102.6667, -1.3333) and (61.3333, 17.3333) .. (39.3333, 32)
         .. controls (17.3333, 46.6667) and (14.6667, 57.3333) .. (20, 72)
         .. controls (25.3333, 86.6667) and (38.6667, 105.3333) .. (48.6667, 113.3333)
         .. controls (58.6667, 121.3333) and (65.3333, 118.6667) .. (67.3333, 115.3333)
         .. controls (69.3333, 112) and (66.6667, 108) .. (75.3333, 106.6667)
         .. controls (84, 105.3333) and (104, 106.6667) .. cycle;
      \fill (80, 64) circle (2cm);
      \fill (130.0941, 31.9908) circle (2cm);
      \draw[->] (130.0941, 31.9908) -- (156, 28);
      \draw[shift={(130.094, 31.991)}, scale=0.446, dashed, ->] (0, 0) -- (50.094, -32.0092);
      
      \draw (208, 80) -- (352, 80);
      \draw (226.7452, 80)
         arc[start angle=-160.5288, end angle=-19.4712, x radius=48, y radius=-48];
      \draw (221.1323, 80) .. controls (244, 84) and (246, 100) .. (256, 110) .. controls (266, 120) and (284, 124) .. (313.3455, 80);
      \fill (272, 64) circle (2cm);
      
      \fill (234.265, 84.6299) circle (2cm);
      \draw[shift={(234.265, 84.63)}, scale=0.95,->] (0, 0) -- (-10.265, 23.3701);
      \draw[shift={(234.265, 84.63)}, scale=0.58, dashed, ->] (0, 0) -- (-37.735, 20.6299);
      \draw (400, 96) -- (448, 16) -- (488, 96);
      \draw (409.6253, 79.9578) arc[start angle=-120.9638, end angle=-63.4349, x radius=74.587, y radius=-74.587];
      \draw (405.632, 86.6133) .. controls (428, 80) and (442, 90) .. (453, 95) .. controls (464, 100) and (472, 100) .. (477.6018, 75.2037);
      
      \draw[shift={(450.918, 94.017)}, scale=0.7412,->] (0, 0) -- (-18.9178, 29.9826);
      \draw[shift={(451.93, 121.091)}, scale=0.347, dashed,<-] (0, 0) -- (-2.918, -78.0174);
        \fill (450.9178, 94.0174) circle (2cm);

      \draw (77.242, 72) node {$x_0$};
      \draw (270, 50.49) node {$(x',-\lambda)$};
      
      \draw (450, 5) node {$(x',0)$};
      \draw (230, 115) node {$\nu_E$};
      \draw (420, 120.527) node {$\nu_E$};
      \draw (160, 45) node {$\nu_E$};
      \draw (284, 130) node {$E$};
      \draw (470, 110) node {$E$};
      \draw (68.071, 135) node {$E$};
    
      \draw (120,110) node {$B$};
      \draw (320,110) node {$B^{\lambda}$};
      \draw (490,50) node {$\mathcal{C}$};
      
    \end{tikzpicture}
    \caption{Representation of the oscillation asymmetries we consider. Left: setting considered in \cite{FuscoJulin}. Center: capillarity model. Right: section of a subset in a cone ($x'$ lies in an axis that is orthogonal to the picture). In all cases, the dashed lines represent the normal vector to the isoperimetric set centered at $x_0$, $(x',-\lambda)$ and $(x',0)$ respectively.}
    \label{fig:asymmetry}
\end{figure}

\vspace{-0.3cm}

For a measurable set $E$ contained in a closed convex cone $\mathcal{C}\subset\R^n$ with non-empty interior $\interior{\mathcal{C}}$, the relative perimeter of $E$ in $\mathcal{C}$ is simply $P_{\mathcal{C}}(E):= P(E, \interior{\mathcal{C}})$. It is well-known since \cite{LionsPacella1990} that isoperimetric sets for $P_{\mathcal{C}}$ are given by relative balls in $\mathcal{C}$ centered at tips, i.e., if $\mathcal{C}=\R^m\times\widetilde{\mathcal{C}}\subset\R^n$ where $\tilde{\mathcal{C}}\subset\R^{n-m}$ does not contain lines, isoperimetric sets are given by $B_r(x',0) \cap \mathcal{C}$, for any $(x',0) \in \R^m\times\{0\}$ and $r>0$. There exists a corresponding sharp quantitative isoperimetric inequality for the Fraenkel asymmetry in the spirit of \eqref{eq:zzClassicalQuantIsop} for the isoperimetric problem in cones, which was proved in \cite{FigalliIndrei} (see \cref{thm:QuantitativeConesFraenkel} below).
We also mention that isoperimetric problems in cones with respect to weighted notions of volume or perimeter have been largely studied, see e.g. \cite{BrockChiacchioMercaldo12, CabreRosOtonSerra12BallsNonradialWeights, AlvinoBrockChiacchioMercaldoPosteraro19, CabreRosOtonSerra} and references therein. A remarkable quantitative inequality with respect to Fraenkel asymmetry for the problem considered in \cite{CabreRosOtonSerra} has been proved in \cite{CintiGlaudoPratelliRosOtonSerra2022}.

The next statement contains our main results about the isoperimetric problem in convex cones  (see also \cref{fig:asymmetry}).

\begin{theorem}[{Strong and barycentric quantitative inequalities in cones -- cf. \cref{thm:ConesStrongQuantitative}, \cref{cor:barycentric-ineq-cones}}]\label{thm:MAINcones}
Let $\mathcal{C}=\R^m\times\widetilde{\mathcal{C}}\subset\R^n$ be a piecewise $C^2$ closed convex cone (see \cref{def:piecewise-C2}) with non-empty interior $\interior{\mathcal{C}}$, where $\tilde{\mathcal{C}}\subset\R^{n-m}$ does not contain lines.
Then there exists  $ C_{\ref{thm:MAINcones}}>0$ depending on $\mathcal{C}$ such that for any set of finite perimeter $E\subset \mathcal{C}$ with $|E|=|B_1 \cap \mathcal{C}|$ there holds
\begin{equation*}
     \inf\left\{\int_{\partial^* E \cap \interior{\mathcal{C}}} \left| \nu_E(x) - \frac{x-(x',0)}{|x-(x',0)|} \right|^2 \de \hausdorff^{n-1} (x) \st x' \in \R^m \right\}
        \le C_{\ref{thm:MAINcones}} \big[P(E,\interior{\mathcal{C}}) - P(B_1 \cap \mathcal{C}, \interior{\mathcal{C}})\big].
\end{equation*}
Moreover, for any $d>\diam_{\R^m}(B_1\cap \mathcal{C})$ there exists a constant $ C_{\ref{thm:MAINcones}}'>0$ depending on $\mathcal{C}$ and $d$ such that for every set of finite perimeter $E\subset \mathcal{C}$ with $|E|=|B_1\cap \mathcal{C}|$ and $\diam_{\R^m}E\leq d$ there holds
    \[
    \begin{split}
        |E \symmdiff((B_1\cap \mathcal{C}) + (\barycenter_{\R^m}E,0))|^2&+\int_{\partial^*E\cap \interior{\mathcal{C}}}\left|\nu_E(x)-\frac{x-(\barycenter_{\R^m}E,0)}{|x-(\barycenter_{\R^m}E,0)|}\right|^2d\hausdorff^{n-1}(x)\\
        &\qquad\qquad\qquad\qquad \qquad\qquad\leq C_{\ref{thm:MAINcones}}' \big[P(E,\interior{\mathcal{C}}) - P(B_1 \cap \mathcal{C}, \interior{\mathcal{C}})\big],
    \end{split}
    \]
where $\diam_{\R^m} E$ is the diameter of the projection of $E$ onto $\R^m$, and $\barycenter_{\R^m}E:= \tfrac{1}{|E|} \int_E (x_1,\ldots, x_m)$.
\end{theorem}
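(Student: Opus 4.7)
The plan is to prove both inequalities by a selection-principle scheme that reduces the global quantitative statement to a local Fuglede-type spectral estimate on the spherical domain $\Omega := \S^{n-1} \cap \interior{\mathcal{C}}$. The innovation for the barycentric form is that the penalized functional is set up with the barycenter as the centering point \emph{directly}, rather than taking an infimum over translations; this is the new application of the selection principle announced in the abstract.

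\textbf{Fuglede-type estimate.} Given $F\subset\mathcal{C}$ with $|F|=|B_1\cap\mathcal{C}|=:|B|$ sufficiently $C^{1,\alpha}$-close to $B$, write $\partial F\cap\interior{\mathcal{C}}=\{(1+u(\omega))\omega : \omega\in\Omega\}$ with $u:\Omega\to\R$ small; the dilation-invariance of $\mathcal{C}$ imposes a natural Neumann condition on $u$ along $\partial\Omega$, which is piecewise $C^2$ by hypothesis. A second-variation expansion and the volume constraint yield
\[
P(F,\interior{\mathcal{C}})-P(B,\interior{\mathcal{C}}) = \tfrac{1}{2}\int_\Omega \bigl(|\nabla_{\S} u|^2-(n-1)u^2\bigr)\de\hausdorff^{n-1} + o(\|u\|_{H^1}^2), \qquad \int_\Omega u\de\hausdorff^{n-1}=O(\|u\|_{L^2}^2).
\]
The crucial spectral fact is that the Neumann eigenfunctions of $-\Delta_{\S}$ on $\Omega$ with eigenvalue $\le n-1$ are exactly the constants (eigenvalue $0$) and the restrictions of the linear coordinates $x_1,\dots,x_m$ (eigenvalue $n-1$), since these are the only infinitesimal translations of $B$ that stay inside $\mathcal{C}=\R^m\times\widetilde{\mathcal{C}}$. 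The constraint $\barycenter_{\R^m} F=0$ (respectively, the optimality of $x'$ in the infimum for the first inequality) kills the $x_1,\dots,x_m$ components of $u$ up to quadratic errors, yielding the coercive bound $\int_\Omega(|\nabla_{\S} u|^2-(n-1)u^2)\de\hausdorff^{n-1}\ge c_0\|u\|_{H^1(\Omega)}^2$. Finally, both the oscillation asymmetry and the symmetric difference are quadratic in $u$ to leading order in the graph parametrization, so they are bounded by $\|u\|_{H^1}^2$ and therefore by the deficit.

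\textbf{Selection principle.} Argue by contradiction: assume the barycentric inequality fails along a sequence $E_k\subset\mathcal{C}$ with $|E_k|=|B|$, $\diam_{\R^m}E_k\le d$, deficit $\delta_k\to 0$, and asymmetry $\ge k\delta_k$. Translate each $E_k$ in $\R^m$ so that $\barycenter_{\R^m}E_k=0$. In the spirit of Cicalese--Leonardi, replace $E_k$ by a minimizer $F_k$ of a penalized functional $P(\cdot,\interior{\mathcal{C}})+\Lambda_k\Phi_k$, where $\Phi_k$ couples a volume penalization with the \emph{barycentric} asymmetry (computed at the barycenter of the competitor, not at an infimum), plus a confinement by the diameter bound. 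By Hausdorff compactness and lower semicontinuity of perimeter and of the asymmetry about a continuously-varying center, $F_k\to B$ in $L^1$. Uniform $(\Lambda,r_0)$-minimality, together with regularity of almost-minimizers of the relative perimeter up to the piecewise $C^2$ boundary of $\mathcal{C}$, upgrades this to $C^{1,\alpha}$ convergence. Hence $F_k$ is eventually a small normal graph $u_k$ over $B$ satisfying the linearized barycentric constraint, and the Fuglede estimate applied to $F_k$ contradicts the violation inherited from $E_k$. The first (non-barycentric) inequality is proved by the same scheme, with the center $(x',0)$ chosen to realize the infimum for each $F_k$ (and with no diameter bound needed, since the strong form is scale invariant and sets far from balls have deficit bounded below).

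\textbf{Main obstacle.} The most delicate step is the Fuglede-type estimate on the spherically convex domain $\Omega$: one must set up the correct Sobolev framework with Neumann boundary conditions along the piecewise $C^2$ part of $\partial\Omega$, and prove the spectral identification of the low Neumann modes with the coordinates $x_1,\dots,x_m$. The diameter assumption enters precisely to guarantee that the barycentric centering is compatible with these translational modes in the limit. A secondary difficulty is the boundary regularity of almost-minimizers of the relative perimeter near the non-smooth locus of $\partial\mathcal{C}$, which is needed to justify the graphical representation near the edges of $\partial\Omega$.
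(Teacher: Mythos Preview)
Your outline follows the classical Fuglede-plus-selection scheme, but your Fuglede step takes a genuinely different route from the paper. You propose to identify the low Neumann spectrum of $-\Delta_{\S}$ on $\Omega=\S^{n-1}\cap\mathcal{C}$ and obtain coercivity of $\int_\Omega(|\nabla u|^2-(n-1)u^2)$ on the orthogonal complement of constants and $x_1,\dots,x_m$. The paper deliberately avoids this spectral identification: instead it bootstraps the already-known Fraenkel quantitative inequality of Figalli--Indrei to bound $\|u\|_{L^1}^2\le C(D_{\mathcal{C}}(E)+|\barycenter_{\R^m}E|^2)$, and then uses a Nash-type interpolation $\|u\|_{L^2}^2\le \delta\|\nabla u\|_{L^2}^2+C_\delta\|u\|_{L^1}^2$ to absorb the negative $-(n-1)u^2$ term into the gradient. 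This buys robustness: the argument needs only a volume lower bound on $\Omega$ and never touches the Neumann spectrum. Your spectral claim is plausible (Lichnerowicz--Escobar gives $\mu_1\ge n-1$ on convex spherical domains, and an Obata-type rigidity should force any eigenfunction at level $n-1$ to be the restriction of a linear function whose direction lies in the lineality space $\R^m$), but you have asserted rather than proved it, and carrying the Bochner/Reilly equality case through the edges of a merely piecewise $C^2$ domain is exactly the kind of boundary-regularity issue you flag as delicate.

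Your selection step is also organized differently. The paper does not impose confinement via the diameter bound; it minimizes the auxiliary functional over \emph{all} finite-measure sets, using Ekeland's variational principle in $(L^1,\symmdiff)$ to produce genuine minimizers, and it fixes the center of the oscillation term at the origin while adding a bounded nonlinear barycenter penalty $|\widetilde b_{\R^m}(F)|^2$ (so that the functional stays lower semicontinuous and the regularity theory for $(\Lambda,r_0)$-almost minimizers applies directly, via Edelen--Li near $\partial\mathcal{C}$). Your proposal to place the asymmetry ``at the barycenter of the competitor'' or ``at the infimizing $x'$'' makes the integrand depend on the competitor itself, which is precisely the obstruction to almost-minimality that the paper works to avoid; in the cone case this is less severe than in capillarity (since $\mu_{\mathcal{C},0}^2$ splits as perimeter plus a volume term), but it still complicates the existence and regularity steps. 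In short: your approach could likely be made to work, but the two unjustified pivots (Neumann--Obata rigidity on piecewise $C^2$ convex caps, and almost-minimality with a competitor-dependent center) are exactly what the paper's design sidesteps.
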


For \cref{thm:MAINcones} we can draw considerations analogous to those discussed above for \cref{thm:MAINcapillarity}. In particular, scaling invariant versions of the inequalities in \cref{thm:MAINcones} hold true (see \cref{thm:ConesStrongQuantitative}), and a barycentric version of the inequality holds for possibly unbounded sets, up to replacing $\barycenter_{\R^m} E$ with a nonlinear notion of barycenter (see \cref{cor:barycentric-ineq-cones}). Moreover, analogously to the case of \cref{thm:MAINcapillarity}, it can be seen that exponents in the inequalities in \cref{thm:MAINcones} are sharp.\\
We also remark that it is possible to unravel a more precise dependence of constant $C_{\ref{thm:MAINcones}}$ on the cone $\mathcal{C}$. Such a dependence is explicitly pointed out in \cref{thm:ConesStrongQuantitative}, where it is proved that $C_{\ref{thm:MAINcones}}$ depends on the dimension $n$, a lower bound on $|B_1(0) \cap \mathcal{C}|$, the constant in the quantitative isoperimetric inequality for the Fraenkel asymmetry (see \cref{thm:QuantitativeConesFraenkel}), and a threshold parameter under which it is possible to apply an $\eps$-regularity result for perimeter almost minimizers in the cone under consideration (see \cref{thm:almost-regularity-cones}).

In fact, a cone $\mathcal{C}$ in \cref{thm:MAINcones} is assumed to be piecewise $C^2$ precisely in order to apply the $\eps$-regularity result in \cref{thm:almost-regularity-cones}, which is a corollary of the main result from \cite{EdelenLi22}. We believe that whenever one has at disposal an analogous $\eps$-regularity result holding in a more general class of cones, our proof could be repeated verbatim leading to \cref{thm:MAINcones} accordingly stated on the less regular class of cones.

As we shall describe below in \cref{sec:StrategyProof}, both \cref{thm:MAINcapillarity} and \cref{thm:MAINcones} follow by applying a new rather robust strategy that builds upon the classical selection principle first developed in \cite{CicaleseLeonardi, AcerbiFuscoMorini}.

\begin{remark}[More barycentric inequalities]\label{rem:MoreBarycentricInequlities}
Our method can be applied to further isoperimetric-type problems, producing barycentric forms of quantitative inequalities. In fact, in \cref{cor:barycentric-Fraenkel} we derive a barycentric version of the quantitative inequality \eqref{eq:PPquantitativaFraenkelCapillare} for the Fraenkel asymmetry in capillarity problems.
Moreover, with our technique we could obtain the barycentric version of the first strong quantitative isoperimetric inequality \eqref{eq:zzQuantFuscoJulin} from \cite{FuscoJulin}. Similarly, our method would also recover the quantitative isoperimetric inequality with barycentric Fraenkel asymmetry for the standard perimeter $P$, which was obtained in dimension $2$ in \cite[Theorem~3.1]{BianchiniCroceHenrot2023}, and recently generalized to every dimension in \cite[Theorem~A]{GambicchiaPratelli2025}.

We stress that our approach is always able to detect an explicit point $x^*$ representing a ``center'' where the asymmetry can be computed, see \cref{cor:tilde-bar-quantitative} and \eqref{eq:BaricentricaConiConBtilde}. In this case, the constants appearing in the inequalities are \emph{independent} of the diameter of the considered sets. On the other hand, if one whishes to compute the asymmetries with respect to the the actual barycenter, as stated in \cref{thm:MAINcapillarity} and \cref{thm:MAINcones}, our technique provides a constant depending on the diameter $d$ of the set. The dependence on $d$ is, in this case, inevitable; however, we are not able to obtain a precise dependence of the constants $C'_{\ref{thm:MAINcapillarity}}$, $C'_{\ref{thm:MAINcones}}$ on $d$.

Finally, we believe that our approach might be useful to provide a quantitative inequality with barycentric asymmetry for the fractional perimeter as well. There is a very recent result for convex sets in \cite{GambicchiaMerlinoRuffiniTalluri2025}, and our methods could be fruitful to treat the general case.
\end{remark}

\subsection{Strategy of the proof and technical comments}\label{sec:StrategyProof}

We first discuss the content of our work regarding the capillarity perimeter, which contains the main technical novelties.
Next, we comment on the isoperimetric problem in convex cones, that requires similar (and in many cases simpler) arguments.

To prove \cref{thm:MAINcapillarity}, we essentially follow the classical strategy leading to a selection-type argument, as also done in \cite{FuscoJulin,Neumayer16, DeMason24}. The method consists in two very different steps:
\begin{description}
    \item [{(Fuglede inequality)}] prove the quantitative inequality for sets that are regular small perturbations of our model (in this case $B^{\lambda}$). This is the content of \cref{prop:FugledeCapillarity}, building on \cref{lem:OScillationBoundedByH1Norm,prop:EspansionePlambda};
    \item [{(Selection principle)}] reduce to regular sets by finding suitable competitors as minimizers of some well designed auxiliary functionals, exploiting the regularity theory already developed for the perimeter functional (see \cref{thm:almost-regularity}). This is the content of \cref{prop:MinimizationProblemSelection}.
\end{description}
In our setting, both of the steps present nontrivial difficulties with respect to the classical results available in the literature. Concerning the Fuglede inequality, the standard approach consists in a Fourier expansion of the perimeter and asymmetry terms (see \cite[Section~1]{Fuglede} and \cite[Lemma~4.2]{CicaleseLeonardi}). However, for the capillarity problem, this is not feasible for different reasons: even if we could consider only sets that can be parametrized as normal graphs on $B^{\lambda}$ (and this is already too restrictive in order to combine it with the selection principle), we lack of a description of the eigenvalues of the Laplacian on the spherical cap $\partial B^{\lambda}\cap\{x_n\geq0\}$ with Dirichlet data. Instead, our idea is to parametrize both $B^{\lambda}$ and its perturbation over $\S^{n-1}\cap\{x_n\geq 0\}$, and then to do a Taylor expansion of the quantities involved. Still, the parametrization of $B^{\lambda}$ does not seem to have any good properties in terms of spherical harmonics, and thus we cannot perform the expansion of our perturbation with the usual technique. We then follow the path paved by \cite[Proposition~1.9]{Neumayer16}, relying on the quantitative isoperimetric inequality with Fraenkel asymmetry (i.e. \cref{thm:PPquantitativaFraenkelCapillare}). In this part of our work, there are much more computations compared to the aforementioned references because the parametrization of $B^{\lambda}$ is clearly non-constant (for $\lambda\neq0$), yielding many more terms in the expressions that we manipulate.

We now briefly describe the selection principle tailored to a proof by contradiction. In a nutshell, we suppose that there exists a sequence of sets $E_k\subset\{x_n\geq0\}$ with $|E_k|=|B^{\lambda}|$ that contradicts the first quantitative inequality in \cref{thm:MAINcapillarity}. If these sets were small perturbations of $B^{\lambda}$, then the Fuglede inequality would give an immediate contradiction, as it provides exactly the desired inequality. To reduce to this case, we consider a sequence of auxiliary functionals $\mathcal{F}_k$ and we take a sequence of minimizers $F_k\subset \{x_n\geq0\}$ of $\mathcal{F}_k$. If we design $\mathcal{F}_k$ properly, then sequence $\{F_k\}_k$ still contradicts \cref{thm:MAINcapillarity}, and they turn out to be regular enough thanks to the regularity theory available for almost minimizers of the (anisotropic weighted) perimeter. Moreover, the sets $F_k$ will converge to $B^{\lambda}$ in $C^1$-topology. Hence, we get to a contradiction by applying the Fuglede inequality to the sets $F_k$ with $k\gg1$. 

We consider the auxiliary functionals
\begin{equation}\label{eq:DefFkINTRO}
    \mathcal{F}_k(F) := P_{\lambda}(F)+\Lambda||F|-|B^{\lambda}||-\frac{1}{k}\int_{\partial^*F\cap\{x_n>0\}}\left|\nu_F(x)-\frac{x+\lambda e_n}{|x+\lambda e_n|}\right|^2\de\hausdorff^{n-1}_x+|\widetilde b(F)|^2,
\end{equation}
where $\Lambda>0$ is a constant, the integral term coincides with the expression in \cref{thm:MAINcapillarity} with $x'=0$, and $\widetilde b(F)$ is a bounded volume term that coincides with $|F|\barycenter_H(F)$ whenever $F\subset B_{100}$. The functional $\mathcal{F}_k$ is defined for any set $F$ with finite measure, without any confinement restriction, and $\widetilde b(F)$ represents a nonlinear notion of barycenter for $F$ (see \eqref{eq:AlternativeBarycenter}). A few comments are in order:
\begin{itemize}
    \item it is easily checked that $F\mapsto P_{\lambda}(F)+\Lambda||F|-|B^{\lambda}||$ is uniquely minimized by $B^{\lambda}$ for $\Lambda>n$, and since $\widetilde b(B^{\lambda})=0$, then $\mathcal{F}_k$ is a small perturbation of a functional that is minimized by our model $B^{\lambda}$, hinting that the minimizers of $\mathcal{F}_k$ (if they exist) converge to $B^{\lambda}$ in some sense as $k\to\infty$;
    \item the structure of the functional is not completely new, as one can check by looking at all the other papers based on the selection principle. Once we minimize $\mathcal{F}_k$, we find sets $F_k$ with $P_{\lambda}(F_k)\sim P_{\lambda}(B^{\lambda})$, while the minus sign in front of the third term in $\mathcal{F}_k$ favors sets for which the asymmetry is not too small. Notice that the integral term in \eqref{eq:DefFkINTRO} is not equal to the oscillation asymmetry considered in the final main inequality in \cref{thm:MAINcapillarity}, as the reference center here is fixed to be $x'=0$;
    \item adding the nonlinear barycenter term $\widetilde b(F)$ is new, and we introduce it here also because the integral term in \eqref{eq:DefFkINTRO} is not translation invariant. In fact, if $\widetilde b(F)$ were not present, then the functionals $\mathcal{F}_k$ might not possess minimizers for some values of $\lambda$ (this is immediate for $\lambda=0$ using \cref{lem:RiscritturaDiMuLambdaZero}).
\end{itemize} 

Comparing with \cite{FuscoJulin,Neumayer16,DeMason24,BogeleinDuzaarFusco,BogeleinDuzaarScheven}, one immediately notices that their auxiliary functionals contain a term that \emph{exactly} coincides with the oscillation asymmetry in their quantitative inequalities. 
However, doing that same (natural) choice in our situation would lead to serious difficulties in applying the regularity theory for almost minimizers of the perimeter, that is the most restraining point in this approach. These issues are due to the different nature of our oscillation asymmetry compared to those present in \cite{FuscoJulin,Neumayer16,DeMason24,BogeleinDuzaarFusco,BogeleinDuzaarScheven}. Indeed, while the oscillation asymmetries considered in those references can be rewritten as the their perimeter functional plus a volume term, in our case we have (see computations in \cref{lem:RiscritturaDiMuLambdaZero})
\begin{equation*}
\begin{split}
    &\inf_{x'\in\{x_n=0\}}\int_{\partial^* F \cap \{x_n>0\}} \left| \nu_F(x) - \frac{x-(x',-\lambda)}{|x-(x',-\lambda)|} \right|^2 \de \hausdorff^{n-1}_x\\
    &\qquad= \inf_{x'\in\{x_n=0\}}\int_{\partial^*F \cap \{x_n>0\}} 1 - \left\langle \nu_F , \frac{\lambda e_n}{|x- \Braket{x,e_n}e_n -(x',-\lambda)|} \right\rangle\de\hausdorff^{n-1}_x
 - \int_F \frac{n-1}{|x-(x',-\lambda)|}\de x.
\end{split}
\end{equation*}
The first term on the second line is a ``perimeter term'' (so a main term in the regularity theory) that is computed at some point $x'=x'(F)$. So, placing our oscillation asymmetry in place of the integral term in \eqref{eq:DefFkINTRO} would not lead to a higher order perturbation of the capillarity perimeter, but rather to a perturbation of a perimeter functional with a density that \emph{depends itself} on the set $F$ that we are considering.
For this reason we consider the auxiliary functionals $\mathcal{F}_k$ containing a non-translation-invariant integral term. Even with this simplification, in \cref{prop:MinimizationProblemSelection} we need to rely on the regularity theory developed in \cite{DePhilippisMaggi2015} for capillarity almost minimizers of weighted anistropic perimeters (see \cref{AppendixA} for the precise definitions and results).

The existence of minimizers for the auxiliary functionals $\mathcal{F}_k$ (at least for $k\gg1$) is fundamental to pursue this strategy and it is nontrivial in our case. Indeed, typically one is able to prove that it is sufficient to consider sets with uniformly bounded diameter. Hence auxiliary functionals in a selection-type argument are usually defined on sets that are a priori contained in some ball.
%
%
See, e.g., \cite[Lemma~3.2]{FuscoJulin} and \cite[Lemma~3.1]{Neumayer16} for this confinement method. However, this confinement procedure is once again based on the expansion of the oscillation asymmetry as the sum of perimeter plus a volume term, which is not available in our situation. Hence we are forced to minimize $\mathcal{F}_k$ among all sets having finite measure, which means that existence of minimizers for $\mathcal{F}_k$ is not immediate.
%
%
To overcome this difficulty, we use a robust and rather simple argument, that appears to be new in our context. The cornerstone of this approach is Ekeland's variational principle (see \cref{thm:Ekeland}), that ensures the existence of minimizers for some \emph{volume} perturbations of $\mathcal{F}_k$. Indeed, the space $\left\{E\subset \{x_n\geq0\}:|E|<+\infty\right\}$ endowed with the $L^1$ distance is a complete metric space, and we prove that $\mathcal{F}_k$ is lower semicontinuous for $k\gg1$. Hence, for any $j>1$, Ekeland's \cref{thm:Ekeland} guarantees the existence of a set $E_j^k\subset\{x_n\geq0\}$ minimizing
\[
    F\mapsto \mathcal{F}_k(F)+\frac{1}{j}|F\symmdiff E_j^k|.
\]
The use of the $L^1$ distance is particularly effective since it amounts to adding a (very small) volume term, and this is harmless when we apply regularity theory. In fact, it is not difficult to prove that $E_j^k$ are quasi-minimizers of the perimeter with uniform constants (that is weaker than being almost minimizers, see \cref{def:quasiminimizer}). Then, \cref{thm:quasi-regularity} provides density estimates, that automatically lead to a uniform confinement of $E_j^k$ for $k\gg1$ thanks to \cref{thm:PPquantitativaFraenkelCapillare} and to a control on the barycenter. This is basically the starting point of \cref{prop:MinimizationProblemSelection}. We mention that an application of Ekeland's principle was also employed in \cite{BoccardoFeroneFuscoOrsina99} in a very different variational problem to improve the regularity of minimizing sequences.

We finally point out that adding the ``barycentric penalization'' in the auxiliary functionals (see \eqref{eq:DefFkINTRO}) in place of performing a confinement reduction eventually has the advantage of automatically producing also the barycentric version of the target quantitative inequality (see the discussion in \cref{sec:BarycentricCapillarity}).

\medskip

To deal with the problem in convex cones, the strategy is basically the same as the one exploited in the capillarity setting. In fact, given a convex cone $\mathcal{C}=\R^m\times \widetilde{\mathcal{C}}\subset\R^n$, where $\widetilde{\mathcal{C}}$ does not contain lines, the Fuglede inequality was already known in a particular class of cones (see \cite[Lemma~3.1]{BaerFigalli17}); after extending such an inequality to the class of convex cones as we did in the capillarity case, we proceed with the selection principle introducing the sequence of auxiliary functionals
\[
    \mathcal{G}_k(F) = P(F,\interior{\mathcal{C}}) + \Lambda\left||F|-|B_1\cap \mathcal{C}|\right|-\frac{1}{k}\int_{\partial^*F\cap\interior{\mathcal{C}}}\left|\nu_F(x)-\frac{x}{|x|}\right|^2\de\hausdorff^{n-1}_x+|\widetilde{b}_{\R^m}(F)|^2,
\]
where $\widetilde{b}_{\R^m}(F)$ is again a nonlinear barycenter, i.e., a volume term coinciding with $|F|\barycenter_{\R^m}(F)$ whenever $F\subset B_{100}$. Now the situation is even simpler than before, since this setting is analogous to the capillarity problem with $\lambda=0$ from many points of view.
The proof of the final \cref{thm:ConesStrongQuantitative} is more explicit than that of the capillarity case (\cref{thm:capillarityStrongQuantitative}), in the sense that no compactness is used at that point and we can keep track rather explicitly of the dependence of the final constant in the quantitative inequality.

\medskip
\noindent\textbf{Organization.}
In \cref{sec:SetsFinitePerimeter} we collect the basic definitions and facts on sets of finite perimeter, while in \cref{sec:capillarity} we provide the first tools specific for the capillarity problem. In \cref{subsec:fuglede-capillarity} we obtain Fuglede-type results for the capillarity functional.
In \cref{sec:proof-capillarity} and \cref{sec:BarycentricCapillarity} we prove the strong and barycentric quantitative inequalities in capillarity problems. In \cref{subsec:sharp} we show that the exponents in our results are sharp.
In \cref{sec:cones} we prove the strong and barycentric quantitative inequalities in cones.
In \cref{AppendixA} we recall some results in regularity theory for sets satisfying certain weak notions of perimeter minimality and a version of Ekeland's variational principle. 
In \cref{AppendixB} we recall some technical identities and estimates.

\section{Strong quantitative isoperimetric inequalities for capillarity problems}

\begin{center}
\emph{Throughout the whole paper it is assumed that $n\in \N$ with $n\ge 2$, and $\lambda \in (-1,1)$ are fixed.}    
\end{center}

\noindent\textbf{List of symbols.}

\begin{itemize}
    \item $B_r(x)$ = open ball of radius $r$ and center $x$ in $\R^n$. We denote $B_r:= B_r(0)$.

    \item $|E|$ = Lebesgue measure of a set $E\subset \R^n$.

    \item $\Braket{v,w}$ denotes Euclidean scalar product, for $v,w \in \R^n$.

    \item $d_{\hausdorff}$ denotes Hausdorff distance in $\R^n$, $\hausdorff^s$ denotes $s$-dimensional Hausdorff measure in $\R^n$.

    \item $\partial^*E$ = reduced boundary of a set $E\subset \R^n$, $P(E,\cdot):= \hausdorff^{n-1}\mres \partial^*E$ denotes the perimeter measure, $P(E):=P(E,\R^n)$.
    

    \item $B^\lambda = \{x \in B : \Braket{x, e_n} > \lambda\} - \lambda e_n$.
    
    \item $B^\lambda(v) := \frac{v^{\frac{1}{n}}}{|B^\lambda|^{\frac1n}} B^\lambda$, for any $v>0$.
    
    \item $B^\lambda(v,x) := B^\lambda(v)  +x$, for any $x \in \{x_n=0\}$. In particular $B^\lambda(v)=B^\lambda(v,0)$.

    \item $H^+:=\{ x_n >0\}$, $H:=\{x_n\ge0\} = \overline{H^+}$.

    \item $\S^{n-1} := \partial B_1$, $\S^{n-1}_+:=\partial B_1 \cap \{x_n\ge 0\}$.

    \item $P_\lambda(E) := P(E,H^+) -\lambda \hausdorff^{n-1}(\partial^* E \cap \partial H)$.

    \item $\alpha_\lambda, D_\lambda$ denote capillarity Fraenkel asymmetry and deficit, respectively, defined in \cref{def:Asymmetry}.

    \item $w_\lambda$ denotes the function that parametrizes the boundary of $B^\lambda$ as a graph over $\S^{n-1}_+$, defined in \cref{def:DistanzaC1}.

    \item $\mu_{\lambda}, \mu_{\lambda,0}$ denote capillarity oscillation asymmetries, defined in \cref{def:capillarity-oscillation}.
    

    \item $C_{n,\lambda}$ denotes a positive constant depending on $n,\lambda$ only, that may change from line to line.

    \item $O(A)$ denotes a real quantity such that $O(A) \le C_{n,\lambda} A$ for some $C_{n,\lambda}>0$.

    \item ${\rm bar}_H F := \tfrac{1}{|F|} \int_F (x-\Braket{x, e_n}e_n) \de x$, for any set $E\subset H$ with $\int_E |x| <+\infty$.

    \item $\psi(t) := \max\{ -100 , \min\{ t, 100\}\}$, defined in \eqref{eq:DefPSI}.

    \item $\widetilde{b}(F) := \int_F \left( \psi(x_1), \ldots, \psi(x_{n-1}) \right) \de x$ for any $F \subset H$ with $|F|<+\infty$, defined in \eqref{eq:AlternativeBarycenter}.

    \item $k_\lambda \in \N$ is the least integer such that $k_\lambda\ge \max\{10, 16/(1-|\lambda|)^2)\}$, defined in \eqref{eq:DefKlambda}.
\end{itemize}

\subsection{Sets of finite perimeter}\label{sec:SetsFinitePerimeter}

We refer to \cite{AmbrosioFuscoPallara, MaggiBook} for a complete account on the theory of sets of finite perimeter.
The perimeter of a measurable set $E\subset\R^n$ in an open set $A\subset \R^n$ is defined by
\begin{equation*}
P(E,A):=\sup\left\{\int_E {\rm div}\, T \st T \in C_c^1(A;\R^n), \,\, \|T\|_\infty \le1\right\}.
\end{equation*}
Denoting $P(E):=P(E,\R^n)$, we say that $E$ is a set of finite perimeter if $P(E)<+\infty$.
If $E$ is a set of finite perimeter, the characteristic function $\chi_E$ has a distributional gradient $D\chi_E$ that is a vector-valued Radon measure on $\R^n$ such that
\begin{equation*}
\int_E {\rm div}\, T  = - \int_{\R^n} T  \de D\chi_E ,
\end{equation*} 
for any field $T \in C_c^1(\R^n;\R^n)$.
The set function $P(E,\cdot)$ defined above can be extended as a positive Borel measure on $\R^n$. Moreover, the measure $P(E,\cdot)$ coincides with the total variation $|D\chi_E|$ of the distributional gradient, and it is concentrated on the reduced boundary
\begin{equation*}
\partial^*E : = \left\{ x \in {\rm spt} |D\chi_E| \st \exists\, \nu^E(x):=-\lim_{r\to0} \frac{D\chi_E(B_r(x))}{|D\chi_E(B_r(x))|} \text{ and } |\nu^E(x)|=1 \right\}.
\end{equation*}
The vector $\nu^E$ is called the generalized outer normal of $E$.
Moreover $P(E,\cdot) = \hausdorff^{n-1}\mres \partial^*E$, and the distributional gradient can be written as $D\chi_E = - \nu^E \hausdorff^{n-1}\mres \partial^*E$.

\subsection{Capillarity problem in the half-space and related asymmetry indexes}\label{sec:capillarity}

We begin with the basic definitions and properties about the quantities involved with the capillarity problem.

\begin{remark}[Alternative expression for $P_{\lambda}$]\label{rem:PlambdaConDivergenza}
Let $E \subset H$ be a measurable set. We observe that
\[
P_\lambda(E) = \int_{\partial^*E\cap H^+}1 - \lambda \Braket{e_n, \nu_E} \de \hausdorff^{n-1},
\]
where $\nu_E$ is the generalized outer normal to $E$. In particular, since $|\lambda|<1$, we have that $P_\lambda(E)\ge0$. The previous identity follows from the divergence theorem, indeed
\begin{equation*} 0 = \int_E \divergence\, e_n \de x = - \hausdorff^{n - 1}(\partial^*E \cap \partial H) + \int_{\partial^*E \cap H^+} \Braket{e_n, \nu_E}  \de \hausdorff^{n-1}.
\end{equation*}
\end{remark}

\begin{definition}[Capillarity Fraenkel asymmetry and isoperimetric deficit]\label{def:Asymmetry}
Let $E \subset H$ be a Borel set with measure $|E|=v\in(0,+\infty)$. Denoting $B^\lambda(v) := \frac{v^{\frac{1}{n}}}{|B^\lambda|^{\frac1n}} B^\lambda$, and $B^\lambda(v,x) := B^\lambda(v)  +x$, for any $x \in \{x_n=0\}$, we define the capillarity Fraenkel asymmetry by
\begin{equation*}
\alpha_\lambda(E) := \inf\left\{\frac{|E \Delta B^\lambda(v,x)|}{v} \st  x \in \partial H\right\}. \end{equation*}
It is readily checked that the Fraenkel asymmetry of $E$ is a minimum. We further define the capillarity isoperimetric deficit by
\begin{equation*}
D_\lambda(E) := \frac{P_\lambda(E) - P_\lambda(B^\lambda(v))}{P_\lambda(B^\lambda(v))}. \end{equation*}
\end{definition}

We recall here the quantitative isoperimetric inequality for the Fraenkel asymmetry in capillarity problems.

\begin{theorem}[{\cite{PascalePozzettaQuantitative, KreutzSchmidt24}}]\label{thm:PPquantitativaFraenkelCapillare}
There exists $C_{\ref{thm:PPquantitativaFraenkelCapillare}} = C_{\ref{thm:PPquantitativaFraenkelCapillare}}(n,\lambda)>0$ such that for any set of finite perimeter $E \subset H$ with $|E|<+\infty$ there holds
\begin{equation}\label{eq:PPquantitativaFraenkelCapillare}
\alpha_\lambda^2(E) \le C_{\ref{thm:PPquantitativaFraenkelCapillare}} D_\lambda
(E).
\end{equation}
\end{theorem}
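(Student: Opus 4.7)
The plan is to prove \cref{thm:PPquantitativaFraenkelCapillare} by contradiction via a selection-principle argument in the spirit of Cicalese--Leonardi, reducing to a Fuglede-type inequality for nearly-spherical perturbations of the bubble $B^\lambda$. First I would normalize: by the scaling invariance $\alpha_\lambda(tE)=\alpha_\lambda(E)$ and $D_\lambda(tE)=D_\lambda(E)$ for $t>0$, it suffices to prove the inequality among sets $E\subset H$ with $|E|=|B^\lambda|$. One then supposes for contradiction that there exists a sequence $E_k\subset H$, $|E_k|=|B^\lambda|$, such that $D_\lambda(E_k)\to 0$ but $\alpha_\lambda^2(E_k)/D_\lambda(E_k)\to\infty$.

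The Fuglede step is to prove that there exist $\delta_0=\delta_0(n,\lambda)>0$ and $C_F=C_F(n,\lambda)>0$ such that whenever $\partial E\cap H^+$ is a normal graph over $\partial B^\lambda\cap H^+$ of a function $u$ with $\|u\|_{C^1}\le \delta_0$ meeting $\partial H$ at the prescribed contact angle in a $C^1$ sense, and $|E|=|B^\lambda|$, then $\alpha_\lambda^2(E)\le C_F\, D_\lambda(E)$. This is carried out by parametrizing $\partial E$ over $\S^{n-1}_+$ (as in the approach described in the introduction for \cref{thm:MAINcapillarity}), expanding the capillarity perimeter to second order as in \cref{prop:EspansionePlambda} below, writing the leading quadratic form in terms of the associated Dirichlet eigenvalue problem on the spherical cap, and using the volume and (vanishing barycenter/centering) constraints to kill the low modes. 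The Fraenkel asymmetry $\alpha_\lambda(E)$ is simultaneously bounded by $C\|u\|_{L^1}\le C'\|u\|_{H^1}$, so the quadratic lower bound on the deficit yields the claimed Fuglede estimate.

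For the selection step, for each $k$ take a minimizer $F_k$ of the auxiliary capillarity functional
\begin{equation*}
G_k(F):=P_\lambda(F)+\Lambda\big||F|-|B^\lambda|\big|+\Big|\sqrt{D_\lambda(F)}-\varepsilon_k\Big|,
\end{equation*}
(with $\varepsilon_k\to 0$ suitably tuned to the contradiction sequence) among sets $F\subset H$, possibly after Ekeland-type penalization in $L^1$ and translation along $\partial H$ as in the argument set up later in the paper for \cref{thm:MAINcapillarity}. A standard computation shows $\{F_k\}$ is a sequence of $(\omega,r_0)$-almost minimizers of $P_\lambda$ with uniform constants, which by the $\eps$-regularity theory recalled in \cref{thm:almost-regularity} (De Philippis--Maggi) gives $C^{1,\alpha}$ convergence of $\partial F_k$ to $\partial B^\lambda$ (after translation, and up to subsequence). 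Consequently $F_k$ falls into the Fuglede regime for $k$ large, which yields $\alpha_\lambda^2(F_k)\le C_F D_\lambda(F_k)$; since the auxiliary functional is designed so that $F_k$ still violates \eqref{eq:PPquantitativaFraenkelCapillare} with a worse constant, this is the desired contradiction.

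The main obstacle is the Fuglede inequality at the bubble $B^\lambda$: unlike the classical ball, $\partial B^\lambda\cap H^+$ is a spherical cap with non-empty boundary on $\partial H$, so the usual Fourier decomposition into spherical harmonics is unavailable, the parametrization of $B^\lambda$ over $\S^{n-1}_+$ is non-constant when $\lambda\ne 0$, and one must handle the natural (capillarity) boundary condition at the contact line $\partial B^\lambda\cap\partial H$ rather than a Dirichlet condition. This forces either an ad-hoc eigenvalue analysis on the cap or, as in the approach outlined in \cref{sec:StrategyProof}, a reduction to the Fraenkel-asymmetry inequality via a careful Taylor expansion of $P_\lambda$. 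A secondary, but manageable, difficulty is ensuring existence and uniform confinement of the selection minimizers $F_k$, which (as the authors note) requires either a volume perturbation à la Ekeland or a suitable barycentric penalization, since the naive confinement trick based on rewriting the asymmetry as perimeter plus volume is not available here.
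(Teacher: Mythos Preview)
There is a genuine gap: your proposal is circular. The paper does \emph{not} prove \cref{thm:PPquantitativaFraenkelCapillare}; it is quoted as an external input from \cite{PascalePozzettaQuantitative, KreutzSchmidt24} and then \emph{used} as a tool throughout. In particular, the paper's Fuglede-type estimate \cref{prop:FugledeCapillarity} (which you invoke as the core of your Fuglede step) relies on \cref{thm:PPquantitativaFraenkelCapillare} in an essential way: the proof of \eqref{eq:L1-IsoperimetricDeficit} begins by applying \cref{thm:PPquantitativaFraenkelCapillare} to obtain a point $z\in\partial H$ with $|E\symmdiff(z+B^\lambda)|\le C D_\lambda(E)^{1/2}$, and this is precisely what converts the $L^2$-bound coming from \cref{prop:EspansionePlambda} into a bound by the deficit. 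So the route ``Fuglede $\Rightarrow$ selection $\Rightarrow$ \cref{thm:PPquantitativaFraenkelCapillare}'' using the paper's Fuglede estimate assumes what you want to prove.

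You implicitly acknowledge this when you write that the Fuglede step ``forces either an ad-hoc eigenvalue analysis on the cap or \ldots\ a reduction to the Fraenkel-asymmetry inequality''. The second option is exactly the circularity above. The first option is not a mere technicality: as the introduction of the paper notes, one lacks a usable description of the Laplace eigenvalues on the spherical cap with the relevant boundary condition, and the parametrization $w_\lambda$ is nonconstant for $\lambda\neq 0$, so the classical spherical-harmonic argument that kills low modes via volume and barycenter constraints does not go through. Without an independent proof of the Fuglede inequality (i.e., a direct spectral or Poincar\'e-type lower bound on the second variation of $P_\lambda$ at $B^\lambda$ that does \emph{not} already use \eqref{eq:PPquantitativaFraenkelCapillare}), your selection-principle strategy does not close. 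The actual proofs in \cite{PascalePozzettaQuantitative, KreutzSchmidt24} proceed by different means (mass-transport/ABP-type arguments, and a reduction to an anisotropic perimeter, respectively), precisely to avoid this circularity.
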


\begin{definition}\label{def:DistanzaC1}
Denote $\S^{n-1}_+:=\partial B_1 \cap \{x_n\ge 0\}$.
We denote by $w_\lambda : \S^{n-1}_+\to \R$ the function such that
\[
 \overline{\partial B^\lambda \cap H^+}= \{ w_\lambda(x) \, x \st x \in \S^{n-1}_+\}.
\]
Let $k \in \N$ with $k\ge 1$, $\beta \in (0,1)$.
Let $E \subset H$ be a bounded open set. We say that the boundary of $E$ is a $C^{k,\beta}$-graph over $\S^{n-1}_+$ (or, more precisely, that $\overline{\partial E\cap H^+}$ is a $C^{k,\beta}$-graph over $\S^{n-1}_+$) if $\partial E$ is Lipschitz and if there exists a $C^{k,\beta}$-function $f:\S^{n-1}_+\to (0,+\infty)$ such that
\[
 \overline{\partial E\cap H^+}= \{ f(x) \, x \st x \in \S^{n-1}_+\}.
\]
If the boundaries of two sets $E,F$ are $C^{k,\beta}$-graphs over $\S^{n-1}_+$ with respect to some functions $f,g$, we define
\[
\d_{C^{k,\beta}}(E,F):=\|f-g\|_{C^{k,\beta}(\S^{n-1}_+)}.
\]
If $\{E_i\}_{i \in \N}, E$ have boundary given by $C^{k,\beta}$-graphs over $\S^{n-1}_+$, we say that $\overline{\partial E_i \cap H^+}$ converges $\overline{\partial E \cap H^+}$ in $C^{k,\beta}$ (or, simply, that $E_i$ converges to a $E$ in $C^{k,\beta}$) if $\d_{C^{k,\beta}}(E_i,E)\to 0$ as $i\to\infty$.
\end{definition}

\begin{definition}[Capillarity oscillation asymmetry]\label{def:capillarity-oscillation}
    Let $E \subset H$ be a Borel set with measure $|E|=|B^\lambda|$. We define the capillarity oscillation asymmetry by
\[
    \mu_\lambda^2(E) \coloneqq \inf\left\{ \frac{1}{2} \int_{\partial^* E \cap H^+} \left| \nu_E(x) - \frac{x-(x',-\lambda)}{|x-(x',-\lambda)|} \right|^2 \de \hausdorff^{n-1} (x) \st x' \in\partial H
\right\}.
\]
For a generic Borel set $E \subset H$ with finite perimeter and finite measure, setting $r_E:=(|E|/|B^\lambda|)^{\frac1n}$, we define
\[
    \mu_\lambda^2(E) \coloneqq \mu_\lambda^2\left( r_E^{-1} E \right) = \inf\left\{
    \frac{1}{2 r_E^{n-1}} \int_{\partial^* E \cap H^+} \left| \nu_E(x) - \frac{x-r_E(x',-\lambda)}{|x-r_E(x',-\lambda)|} \right|^2 \de \hausdorff^{n-1} (x) \st x' \in\partial H
\right\}.
\]
For technical reasons, it will be convenient to also consider an oscillation asymmetry that is not translation or scaling invariant. For any Borel set $E\subset H$ with finite measure and finite perimeter, we define
\[
    \mu_{\lambda,0}^2(E) = \frac{1}{2}\int_{\partial^* E\cap H^+}\left|\nu_E(x)-\frac{x+\lambda e_n}{|x+\lambda e_n|}\right|^2\de \hausdorff^{n-1}(x).
\]
\end{definition}

\begin{lemma}\label{lem:RiscritturaDiMuLambdaZero}
Let $E\subset H$ be a set of finite perimeter with finite volume. Then
\[
 \mu_{\lambda,0}^2(E) = \int_{\partial^*E \cap H^+} 1 - \left\langle \nu_E , \frac{\lambda e_n}{|x- \Braket{x,e_n}e_n +\lambda e_n|} \right\rangle\de\hausdorff^{n-1}_x
 - \int_E \frac{n-1}{|x+\lambda e_n|}\de x.
\]
\end{lemma}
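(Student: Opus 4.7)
The plan is a direct divergence-theorem computation after expanding the square. Write
\[
    \mu_{\lambda,0}^2(E)=\frac12\int_{\partial^*E\cap H^+}\bigl|\nu_E(x)-V_1(x)\bigr|^2\de\hausdorff^{n-1}(x),\qquad V_1(x):=\frac{x+\lambda e_n}{|x+\lambda e_n|},
\]
and observe that $|\nu_E|=|V_1|=1$, so $|\nu_E-V_1|^2=2-2\Braket{\nu_E,V_1}$. This already gives
\[
    \mu_{\lambda,0}^2(E)=\int_{\partial^*E\cap H^+}1-\Braket{\nu_E,V_1}\de\hausdorff^{n-1}.
\]
Thus the claim is equivalent to showing
\[
    \int_{\partial^*E\cap H^+}\Braket{\nu_E,V_1-V_2}\de\hausdorff^{n-1}=\int_E\frac{n-1}{|x+\lambda e_n|}\de x,\qquad V_2(x):=\frac{\lambda e_n}{|x-\Braket{x,e_n}e_n+\lambda e_n|}.
\]

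Next I would collect the elementary properties of $V_1$ and $V_2$ on which the argument rests. Writing $x=(x',x_n)$, one has $V_2(x)=\frac{\lambda}{\sqrt{|x'|^2+\lambda^2}}\,e_n$, which depends only on $x'$ and points in the $e_n$ direction; hence $\divergence V_2\equiv 0$. On the other hand $V_1=\grad_x |x+\lambda e_n|$, so $\divergence V_1=\Delta|x+\lambda e_n|=(n-1)/|x+\lambda e_n|$ on $\R^n\setminus\{-\lambda e_n\}$. Finally, on $\partial H=\{x_n=0\}$ both fields have the same $e_n$-component $\lambda/\sqrt{|x'|^2+\lambda^2}$, so
\[
    \Braket{V_1-V_2,-e_n}=0\quad\text{on }\partial H.
\]

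The core step is then the divergence theorem applied to $V_1-V_2$ on $E$, recalling that $\partial^*E\subset \partial^*E\cap H^+\cup(\partial^*E\cap\partial H)$ with outer normal $-e_n$ on the flat part:
\[
    \int_E \divergence(V_1-V_2)\de x=\int_{\partial^*E\cap H^+}\Braket{\nu_E,V_1-V_2}\de\hausdorff^{n-1}+\int_{\partial^*E\cap\partial H}\Braket{V_1-V_2,-e_n}\de\hausdorff^{n-1}.
\]
The second boundary integral vanishes by the last observation above, and the left-hand side equals $\int_E(n-1)/|x+\lambda e_n|\de x$. Rearranging the first displayed identity yields exactly the statement.

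The only non-routine point is the singularity of $V_1$ at $-\lambda e_n$, which lies inside $H^+$ when $\lambda<0$ and may lie inside $E$. I would handle this in the standard way: remove a ball $B_\epsilon(-\lambda e_n)$ from $E$, apply the divergence theorem for sets of finite perimeter to $V_1-V_2$ on $E\setminus B_\epsilon(-\lambda e_n)$ (on which $V_1-V_2$ is smooth), and let $\epsilon\to 0$. The extra boundary term on $\partial B_\epsilon(-\lambda e_n)$ is $O(\epsilon^{n-1})$ since both $V_1$ and $V_2$ are bounded (in fact $|V_1|=1$ and $|V_2|\le 1$), and the integral of $(n-1)/|x+\lambda e_n|$ over $E$ is finite because this density is locally integrable in $\R^n$ for $n\ge 2$. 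This is the main—but entirely standard—technical obstacle; everything else is a direct computation.
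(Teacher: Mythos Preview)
Your proof is correct and follows essentially the same approach as the paper: both expand the square and apply the divergence theorem using that $\divergence V_1=(n-1)/|x+\lambda e_n|$, $\divergence V_2=0$, and that $V_1$ and $V_2$ have the same $e_n$-component on $\partial H$. The paper applies the divergence theorem separately to $V_1$ and then to $V_2$, while you apply it once to $V_1-V_2$; this is the same computation, and your explicit handling of the singularity at $-\lambda e_n$ is a nice addition that the paper leaves implicit.
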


\begin{proof}
The claim directly follows form the divergence theorem since
\[
\begin{split}
\mu_{\lambda,0}^2(E) 
& = \int_{\partial^*E \cap H^+} 1 - \left\langle\nu_E , \frac{x+ \lambda e_n}{|x+\lambda e_n |}\right\rangle 
\\&=
P(E,H^+) - 
\int_E {\rm div} \left( \frac{x+ \lambda e_n}{|x+\lambda e_n |}\right) 
+ \int_{\partial^*E \cap \partial H} \left\langle - e_n, \frac{x+ \lambda e_n}{|x+\lambda e_n |}\right\rangle \\
&=
P(E,H^+) - 
\int_E \frac{n-1}{|x+\lambda e_n|}
+ \int_{\partial^*E \cap \partial H} \left\langle - e_n, \frac{x+ \lambda e_n}{|x+\lambda e_n |}\right\rangle,
\end{split}
\]
and
\[
\begin{split}
    \int_{\partial^*E \cap \partial H} \left\langle - e_n, \frac{x+ \lambda e_n}{|x+\lambda e_n |}\right\rangle
    &= \int_{\partial^*E \cap \partial H} \left\langle
    -e_n, \frac{\lambda e_n}{|x -\Braket{x,e_n}e_n +\lambda e_n|}
    \right\rangle 
    \\
    &= 
    \int_E {\rm div}\left(  \frac{\lambda e_n}{|x - \Braket{x,e_n}e_n +\lambda e_n|}\right)
    - \int_{\partial^* E \cap H^+} \left\langle
    \nu_E,   \frac{\lambda e_n}{|x - \Braket{x,e_n}e_n +\lambda e_n|}
    \right\rangle,
\end{split}
\]
and ${\rm div}\left(  \frac{\lambda e_n}{|x - \Braket{x,e_n}e_n +\lambda e_n|}\right)=0$.
\end{proof}

\subsection{Fuglede-type computations for the capillarity problem}\label{subsec:fuglede-capillarity}
The following three technical results, namely \cref{lem:OScillationBoundedByH1Norm}, \cref{prop:EspansionePlambda} and \cref{prop:FugledeCapillarity}, proceed along the lines of \cite[Section 4]{Neumayer16}. However, an additional complication here is that we need to parametrize the boundaries of both $B^{\lambda}$ and a of generic competitor as graphs over $\S^{n-1}_+$. Hence, both of the parametrizing graphs are non-constant, leading to a more involved computation. Moreover, in \cref{prop:FugledeCapillarity} we will derive a Fuglede-type inequality for the capillarity perimeter keeping track of a possible barycentric displacement.

\begin{lemma}\label{lem:OScillationBoundedByH1Norm}
There exist $\eps_{\ref{lem:OScillationBoundedByH1Norm}}, C_{\ref{lem:OScillationBoundedByH1Norm}}>0$ depending on $n,\lambda$ such that the following bounds hold.\\
Let $E \subset H$ be an open set such that $\overline{\partial E \cap H^+} = \{ (w_\lambda(x) + u(x))x \st x \in \S^{n-1}_+\}$ for some $C^1$ function $ u : \S^{n-1}_+ \to \R$ with $\|u\|_{C^1(\S^{n-1}_+)}\le \eps_{\ref{lem:OScillationBoundedByH1Norm}}$. Then
\[
\mu_{\lambda,0}^2(E)\leq C_{\ref{lem:OScillationBoundedByH1Norm}} \| u \|^2_{H^1(\S^{n-1}_+)} .
\]
In particular, if also $|E|=|B^\lambda|$, then 
\[
\mu_\lambda^2(E) \le \mu_{\lambda,0}^2(E)\leq C_{\ref{lem:OScillationBoundedByH1Norm}} \| u \|^2_{H^1(\S^{n-1}_+)} .
\]
\end{lemma}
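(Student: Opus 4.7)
The plan is to parametrize both $\partial B^{\lambda}\cap H^+$ and $\partial E\cap H^+$ as radial graphs over $\S^{n-1}_+$, respectively via $r=w_\lambda$ and $r=w_\lambda+u$, and then to expand the integrand defining $\mu_{\lambda,0}^2(E)$ as a smooth function of $(u,\nabla^{\S} u)$ that vanishes when $u\equiv 0$ (here $\nabla^{\S}$ denotes the tangential gradient on $\S^{n-1}$). The key algebraic fact I would invoke is the classical formula for the outer unit normal to a radial graph: if $\Phi(\omega)=r(\omega)\omega$ with $\omega\in\S^{n-1}_+$ and $r>0$ of class $C^1$, then
\[
\nu(\Phi(\omega)) = \frac{r(\omega)\,\omega - \nabla^{\S} r(\omega)}{\sqrt{r(\omega)^2+|\nabla^{\S} r(\omega)|^2}}.
\]
Applied to $r=w_\lambda$, together with the fact that $w_\lambda(\omega)\,\omega+\lambda e_n\in\S^{n-1}$ is by construction the outer unit normal to $B^\lambda$ at the point $w_\lambda(\omega)\omega$, this gives the pointwise identity
\[
\frac{w_\lambda(\omega)\,\omega - \nabla^{\S} w_\lambda(\omega)}{\sqrt{w_\lambda(\omega)^2+|\nabla^{\S} w_\lambda(\omega)|^2}} = w_\lambda(\omega)\,\omega+\lambda e_n \qquad\forall\,\omega\in\S^{n-1}_+.
\]

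Next I would introduce the smooth auxiliary map
\[
G(r,p,\omega) := \frac{r\,\omega - p}{\sqrt{r^2+|p|^2}} - \frac{r\,\omega+\lambda e_n}{|r\,\omega+\lambda e_n|},
\]
defined for $\omega\in\S^{n-1}_+$, $r>0$ and $p\in\R^n$ with $p\perp\omega$, so that the identity above reads $G(w_\lambda(\omega),\nabla^{\S} w_\lambda(\omega),\omega)=0$ for every $\omega\in\S^{n-1}_+$. Since $|\lambda|<1$, a direct computation gives the explicit formula $w_\lambda(\omega)=-\lambda\omega_n+\sqrt{1-\lambda^2+\lambda^2\omega_n^2}$, which is smooth on the whole closed hemisphere $\S^{n-1}_+$ and bounded below by a positive constant $c(\lambda)>0$. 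Hence, provided $\|u\|_{C^1(\S^{n-1}_+)}\le\eps_{\ref{lem:OScillationBoundedByH1Norm}}$ is small enough, the triple $(w_\lambda+u,\nabla^{\S} w_\lambda+\nabla^{\S} u,\omega)$ stays in a compact region on which $G$ is $C^1$ in $(r,p)$ uniformly in $\omega$. A first-order Taylor expansion of $G$ in $(r,p)$ about $(w_\lambda(\omega),\nabla^{\S} w_\lambda(\omega))$ then yields the pointwise bound
\[
\left|\nu_E(\Phi(\omega)) - \frac{\Phi(\omega)+\lambda e_n}{|\Phi(\omega)+\lambda e_n|}\right| \le C_{n,\lambda}\bigl(|u(\omega)|+|\nabla^{\S} u(\omega)|\bigr).
\]

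To conclude, I would apply the area formula: since $\partial E\cap H^+$ is the image of $\Phi(\omega)=(w_\lambda(\omega)+u(\omega))\omega$ and the associated tangential Jacobian $J\Phi=(w_\lambda+u)^{n-2}\sqrt{(w_\lambda+u)^2+|\nabla^{\S}(w_\lambda+u)|^2}$ is bounded above by $C_{n,\lambda}$ whenever $\|u\|_{C^1}\le \eps_{\ref{lem:OScillationBoundedByH1Norm}}$, squaring the pointwise bound above and integrating produces
\[
\mu_{\lambda,0}^2(E) \le C_{n,\lambda}\int_{\S^{n-1}_+}\bigl(|u|^2+|\nabla^{\S} u|^2\bigr)\de\hausdorff^{n-1} = C_{n,\lambda}\|u\|_{H^1(\S^{n-1}_+)}^2,
\]
which is the claim up to renaming constants. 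The ``in particular'' part of the statement then follows by choosing $x'=0$ in the infimum defining $\mu_\lambda^2(E)$, since with $|E|=|B^\lambda|$ this choice makes the integrand in the definition of $\mu_\lambda^2$ coincide with that of $\mu_{\lambda,0}^2$.

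The main technical point that needs attention is that all constants (in the Taylor expansion and in the Jacobian bound) be uniform in $\omega\in\S^{n-1}_+$ up to the equator $\partial\S^{n-1}_+=\{\omega_n=0\}$, where the radial graph meets the flat boundary $\partial H$ of the half-space. This is however guaranteed by the closed-form expression for $w_\lambda$ recalled above, which is smooth on the closed hemisphere with $w_\lambda\ge\min\{1-|\lambda|,\sqrt{1-\lambda^2}\}>0$. I emphasize that no capillarity-type boundary condition on $u$ is required at this stage: the estimate uses only the full $H^1$-norm of $u$ on $\S^{n-1}_+$, with no trace contribution from $\partial\S^{n-1}_+$.
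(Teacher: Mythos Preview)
Your proof is correct and follows essentially the same approach as the paper: parametrize $\partial E\cap H^+$ as a radial graph, use the explicit formula for the outer normal to a radial graph together with the identity $\nu_{B^\lambda}(w_\lambda x)=w_\lambda x+\lambda e_n$, observe that the integrand vanishes when $u\equiv 0$, derive a pointwise bound $O(|u|+|\nabla^{\S}u|)$, and integrate using the bounded Jacobian. The only cosmetic difference is that the paper splits the integrand explicitly as a sum of two terms (one comparing $\nu_E$ to $\nu_{B^\lambda}$, the other comparing $\nu_{B^\lambda}$ to $\tfrac{(w_\lambda+u)x+\lambda e_n}{|(w_\lambda+u)x+\lambda e_n|}$) and estimates each by hand, whereas you package everything into a single smooth map $G$ and invoke a uniform Lipschitz bound; both routes are equivalent in content.
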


\begin{proof}
We take $\eps_{\ref{lem:OScillationBoundedByH1Norm}}\in(0,1)$ sufficiently small so that $|w_\lambda + u| \ge c=c(n,\lambda)>0$ on $\S^{n-1}_+$.
We will exploit identities from \cref{lemma:formulas}.
We can rewrite
\begin{equation}\label{eq:zzz0}
\begin{split}
     &\mu_{\lambda,0}^2(E)
     = \frac12 \int_{\partial E \cap H^+} 
     \left| \nu_E(y) - \frac{y+\lambda e_n}{|y+\lambda e_n|} \right|^2 \de \hausdorff^{n-1}(y) \\
     &= \frac12 \int_{\S^{n-1}_+} 
     \left| \nu_E((w_\lambda+u)x) - \frac{(w_\lambda+u)x+\lambda e_n}{|(w_\lambda+u)x+\lambda e_n|} \right|^2
     (w_\lambda+u)^{n-2}[ (w_\lambda+u)^2 + |\nabla w_\lambda+ \nabla u|^2]^{\frac12} \de \hausdorff^{n-1}(x) \\
     &\le C \int_{\S^{n-1}_+} 
     \left| \nu_E((w_\lambda+u)x) - \frac{(w_\lambda+u)x+\lambda e_n}{|(w_\lambda+u)x+\lambda e_n|} \right|^2 \de \hausdorff^{n-1}(x).
\end{split}
\end{equation}
Moreover
\begin{equation}\label{eq:Normali}
    \begin{split}
        \nu_E((w_\lambda+u)x) = \frac{(w_\lambda+u)x - \nabla w_\lambda - \nabla u}{[(w_\lambda+u)^2 + |\nabla w_\lambda + \nabla u|^2]^{\frac12}},\\
        \nu_{B^\lambda}(w_\lambda x) = w_\lambda x +\lambda e_n = \frac{w_\lambda x - \nabla w_\lambda}{[w_\lambda^2+ |\nabla w_\lambda|^2]^{\frac12}},
    \end{split}
\end{equation}
for any $x \in \S^{n-1} \cap H^+$. Hence, for $x \in \S^{n-1} \cap H^+$, we have
\begin{equation}\label{eq:zzz1}
    \begin{split}
        \nu_E((w_\lambda+u)x) &- \frac{(w_\lambda+u)x+\lambda e_n}{|(w_\lambda+u)x+\lambda e_n|} 
        \\&\qquad\qquad=
        \underbrace{
        \frac{(w_\lambda+u)x - \nabla w_\lambda - \nabla u}{[(w_\lambda+u)^2 + |\nabla w_\lambda + \nabla u|^2]^{\frac12}} - \nu_{B^\lambda}(w_\lambda x)}_{A}
        + 
        \underbrace{
        w_\lambda x +\lambda e_n - \frac{(w_\lambda+u)x+\lambda e_n}{| u x + \nu_{B^\lambda}(w_\lambda x)|}}_{B}.
    \end{split}
\end{equation}
We study the summands in \eqref{eq:zzz1} separately. First, since $|w_\lambda + u| \ge c>0$, we have
\begin{equation}\label{eq:zzz2}
    \begin{split}
        |A&| 
        = 
        \left|
        \frac{w_\lambda x - \nabla w_\lambda }{[(w_\lambda+u)^2 + |\nabla w_\lambda + \nabla u|^2]^{\frac12}} - \nu_{B^\lambda}(w_\lambda x)
        +
        \frac{u x - \nabla u}{[(w_\lambda+u)^2 + |\nabla w_\lambda + \nabla u|^2]^{\frac12}} \right| \\
        &\overset{\eqref{eq:Normali}}{\le}
        |w_\lambda x - \nabla w_\lambda| \left( \frac{1}{[(w_\lambda+u)^2 + |\nabla w_\lambda + \nabla u|^2]^{\frac12}}
        -
        \frac{1}{[w_\lambda^2 + |\nabla w_\lambda|^2]^{\frac12}} 
        \right) + O(|u| + |\nabla u|) 
        \\
        &= \frac{|w_\lambda x - \nabla w_\lambda|}{[(w_\lambda+u)^2 + |\nabla w_\lambda + \nabla u|^2]^{\frac12}[w_\lambda^2 + |\nabla w_\lambda|^2]^{\frac12}} 
        \left( 
        [w_\lambda^2 + |\nabla w_\lambda|^2]^{\frac12}
        -
        [(w_\lambda+u)^2 + |\nabla w_\lambda + \nabla u|^2]^{\frac12}
        \right) \\
        &\qquad\qquad\qquad\qquad\qquad\qquad\qquad\qquad\qquad\qquad\qquad\qquad\qquad\qquad\qquad\qquad\qquad\qquad
        + O(|u| + |\nabla u|) 
        \\
        &=\frac{|w_\lambda x - \nabla w_\lambda|}{[(w_\lambda+u)^2 + |\nabla w_\lambda + \nabla u|^2]^{\frac12}[w_\lambda^2 + |\nabla w_\lambda|^2]^{\frac12}} 
        \left( 
        -\frac{2uw_{\lambda}+u^2+|\grad u|^2+2\grad u\cdot\grad w_{\lambda}}{2[w_{\lambda}^2+|\grad w_{\lambda}|^2]^{1/2}}+O(|u|+|\grad u|)
        \right)\\
    & = O(|u|+|\grad u|)
    \end{split}
\end{equation}
where $O(\cdot)$ may change from line to line.

Moreover
\begin{equation}\label{eq:zzz3}
    \begin{split}
        |B|
        &= \left| w_\lambda x +\lambda e_n - \frac{(w_\lambda+u)x+\lambda e_n}{[1+ u^2 + 2u \Braket{\nu_{B^\lambda}, x}]^{\frac12}} \right|
        = \left|  w_\lambda x +\lambda e_n - \frac{(w_\lambda+u)x+\lambda e_n}{1 + u \Braket{\nu_{B^\lambda}, x} + \tfrac12 u^2 + O(|u|)} \right|
        \\
        &= \left| w_\lambda x +\lambda e_n - (w_\lambda x+u x+\lambda e_n) \left( 1 - u \Braket{\nu_{B^\lambda}, x} - \frac12 u^2 + O(|u|) \right) \right| =O(|u|).
    \end{split}
\end{equation}
Inserting \eqref{eq:zzz1}, \eqref{eq:zzz2}, \eqref{eq:zzz3} into \eqref{eq:zzz0} we find
\begin{equation*}
    \begin{split}
    \mu_{\lambda,0}^2(E)
    &\le C \int_{\S^{n-1}_+} 
     \left|A + B\right|^2 \de \hausdorff^{n-1}(x) 
     \le C \int_{\S^{n-1}_+} |A|^2 + |B|^2 \le C \| u \|^2_{H^1(\S^{n-1}_+)}.
    \end{split}
\end{equation*}
\end{proof}


\begin{proposition}\label{prop:EspansionePlambda}
There exist $c_{\ref{prop:EspansionePlambda}}, C_{\ref{prop:EspansionePlambda}}, \epsilon_{\ref{prop:EspansionePlambda}}>0$ depending on $n,\lambda$ such that the following holds.
Let $E \subset H$ be an open bounded set with measure $|E|=|B^\lambda|$. Suppose that the boundary of $E$ is a $C^1$-graph over $\S^{n-1}_+$, see \cref{def:DistanzaC1}, with respect to a function $f= w_\lambda + u$, with $\|u \|_{C^1(\S^{n-1}_+)} \le\epsilon\le \epsilon_{\ref{prop:EspansionePlambda}}$. Then
\begin{equation*}
    P_\lambda(E) = P_\lambda(B^\lambda) + B(u) + \epsilon \, O \left( \|u \|^2_{H^1(\S^{n-1}_+)} \right),
\end{equation*}
where $B(u)$ satisfies
\[
B(u) \ge c_{\ref{prop:EspansionePlambda}} \int_{\S^{n-1}_+} |\grad u|^2 - C_{\ref{prop:EspansionePlambda}} \int_{\S^{n-1}_+} u^2.
\]
\end{proposition}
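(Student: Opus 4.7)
The plan is to follow the Fuglede-type strategy of \cite[Section~4]{Neumayer16}: perform an explicit second-order Taylor expansion of $P_\lambda$ in the graph parametrization, and then exploit the volume constraint together with the minimality of $B^\lambda$ to convert the linear term into an $L^2$-quantity absorbable by the quadratic term. Using the graph identities as in the proof of \cref{lem:OScillationBoundedByH1Norm}, I would rewrite
\[
P_\lambda(E) = \int_{\S^{n-1}_+} F(g,\nabla g)\de\hausdorff^{n-1} - \frac{\lambda}{n-1}\int_{\S^{n-2}} g^{n-1}\de\hausdorff^{n-2},\qquad |E|=\frac{1}{n}\int_{\S^{n-1}_+} g^n\de\hausdorff^{n-1},
\]
where $g:=w_\lambda+u$ and $F(g,\nabla g):=g^{n-2}\sqrt{g^2+|\nabla g|^2}$, and Taylor-expand $F$, $g^n$, $g^{n-1}$ to second order around $g=w_\lambda$ to obtain $P_\lambda(E)-P_\lambda(B^\lambda)=L(u)+Q(u)+R_0(u)$ with $L$ linear, $Q$ quadratic in $(u,\nabla u)$, and $R_0$ a cubic-type remainder.

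For the linear term, since $B^\lambda$ is the unique volume-constrained critical point of $P_\lambda$, $L(u)$ must be proportional to the linear part of the volume, i.e.\ $L(u)=\mu\int_{\S^{n-1}_+} w_\lambda^{n-1}u\de\hausdorff^{n-1}$ for a constant $\mu=\mu(n,\lambda)$. This can also be verified directly by integration by parts on $\S^{n-1}_+$: the constant mean curvature $n-1$ of $\partial B^\lambda\cap H^+$ produces the bulk coefficient, while the Young contact angle condition $\langle \nu_{B^\lambda}, e_n\rangle=\lambda$ at the equator makes the conormal boundary contribution cancel the linearization of the flat-part integral. Expanding the volume constraint $\int g^n=\int w_\lambda^n$ to second order gives
\[
\int_{\S^{n-1}_+} w_\lambda^{n-1}u\de\hausdorff^{n-1} = -\frac{n-1}{2}\int_{\S^{n-1}_+} w_\lambda^{n-2}u^2\de\hausdorff^{n-1}+\rho(u),
\]
with $|\rho(u)|\le C\|u\|_{C^0}\|u\|_{L^2}^2\le C\epsilon\|u\|_{H^1}^2$. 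Defining $B(u):=Q(u)-\tfrac{\mu(n-1)}{2}\int_{\S^{n-1}_+} w_\lambda^{n-2}u^2\de\hausdorff^{n-1}$ then yields $P_\lambda(E)-P_\lambda(B^\lambda)=B(u)+\epsilon\,O(\|u\|_{H^1}^2)$.

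It remains to show $B(u)\ge c\int|\nabla u|^2-C\int u^2$. The principal coercive contribution in $Q$ comes from $F_{\nabla g\nabla g}(w_\lambda,\nabla w_\lambda)$ and equals
\[
\frac{1}{2}\int_{\S^{n-1}_+} w_\lambda^{n-2}\,\frac{(w_\lambda^2+|\nabla w_\lambda|^2)|\nabla u|^2-\langle\nabla w_\lambda,\nabla u\rangle^2}{(w_\lambda^2+|\nabla w_\lambda|^2)^{3/2}}\de\hausdorff^{n-1};
\]
the matrix $(w_\lambda^2+|\nabla w_\lambda|^2)I-\nabla w_\lambda\otimes\nabla w_\lambda$ has eigenvalues $w_\lambda^2$ and $w_\lambda^2+|\nabla w_\lambda|^2$, both bounded below by $c(n,\lambda)>0$ since $w_\lambda$ is smooth and strictly positive on the closed hemisphere $\S^{n-1}_+$ for $|\lambda|<1$, so this term is $\ge c\int|\nabla u|^2$. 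The remaining quadratic terms (from $F_{gg}$, $F_{g\nabla g}$, the quadratic part of the flat-boundary integral, and the substituted $L(u)$ contribution) have $L^\infty$-bounded coefficients; Young's inequality absorbs the $u\nabla u$ cross terms into $\tfrac{c}{2}\int|\nabla u|^2-C\int u^2$, giving the coercivity, while $R_0(u)=\epsilon\,O(\|u\|_{H^1}^2)$ follows from pointwise third-order Taylor estimates using $\|u\|_{C^1}\le\epsilon$. The main technical obstacle, which makes this computation significantly more involved than in \cite{Neumayer16} (where the reference set is a centered ball with $w\equiv 1$ and $\nabla w=0$), is the proliferation of cross terms produced by $\nabla w_\lambda\ne 0$ in the second-order expansion of $F$; these must be carefully tracked, but all of them fall into one of the three categories above, so the overall scheme goes through.
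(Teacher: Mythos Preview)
Your overall scheme matches the paper's, but the implementation differs in a way that creates a gap you have not closed. The paper does \emph{not} split $P_\lambda(E)=P(E,H^+)-\lambda\,\hausdorff^{n-1}(\partial^*E\cap\partial H)$ and expand each piece; instead it uses \cref{rem:PlambdaConDivergenza} to write $P_\lambda(E)=\int_{\partial^*E\cap H^+}\phi(\nu_E)$ with the $1$-homogeneous integrand $\phi(v)=|v|-\lambda\langle v,e_n\rangle$, and Taylor-expands $\phi$ on the unnormalized normal vectors $\omega,\omega^B$. This packages everything as a single integral over $\S^{n-1}_+$ with \emph{no} boundary term on $\S^{n-2}$, and the linear term drops out directly as $(n-1)\int_{\S^{n-1}_+} w_\lambda^{n-1}u$ without any appeal to criticality or integration by parts with the contact-angle condition.

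In your route the flat-part integral $-\tfrac{\lambda}{n-1}\int_{\S^{n-2}} g^{n-1}$, expanded to second order, contributes $-\tfrac{\lambda(n-2)}{2}(1-\lambda^2)^{(n-3)/2}\int_{\S^{n-2}}u^2$ to $Q(u)$. For $\lambda>0$ and $n\ge 3$ this is negative, and it is \emph{not} bounded by $C\int_{\S^{n-1}_+}u^2$ merely because its coefficient is bounded: $\|u\|_{L^2(\S^{n-2})}^2$ is not controlled by $\|u\|_{L^2(\S^{n-1}_+)}^2$ alone. To absorb this boundary contribution before concluding $B(u)\ge c\int|\nabla u|^2-C\int u^2$ you need a trace interpolation inequality of the form $\int_{\S^{n-2}}u^2\le \delta\int_{\S^{n-1}_+}|\nabla u|^2+C_\delta\int_{\S^{n-1}_+}u^2$. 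This is standard and repairs the argument, but it is a genuine missing step in what you wrote; the paper's anisotropic-integrand formulation sidesteps the issue entirely.
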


\begin{proof}
Recalling \cref{rem:PlambdaConDivergenza}, letting $\phi(v) := |v|-\lambda\Braket{v,e_n}$ for any $v \in \R^n$, there holds
    \[
        P_{\lambda}(E) = \int_{\partial^* E\cap H^+}\phi(\nu_E) \de \hausdorff^{n-1}.
    \]
We compute
    \[
        \grad\phi(v) = \frac{v}{|v|}-\lambda e_n,\qquad \grad^2 \phi(v) = \frac{1}{|v|}\left(\Id - \frac{v}{|v|}\otimes\frac{v}{|v|}\right).
    \]
    By \cref{lemma:formulas}, the outer unit normals to $E$ and $B^\lambda$ are given by
    \begin{equation}\label{eq:normal-to-graph}
        \nu_E(f(x)\,x)=\frac{f(x)\,x-\grad f(x)}{\sqrt{f(x)^2+|\grad f(x)|^2}} ,
        \qquad
        \nu_{B^\lambda}( w_\lambda(x) \, x ) =
        \frac{w_\lambda(x)\,x-\grad w_\lambda(x)}{\sqrt{w_\lambda(x)^2+|\grad w_\lambda(x)|^2}} ,
    \end{equation}
for any $x\in \S^{n-1}_+ \cap H^+$.

To simplify the computations, in this proof we will adopt Einstein's sum convention (i.e., summation is understood over the repeated indexes). We will denote by $\{\tau_1,\ldots,\tau_{n-1}\}$ an orthonormal basis of $T_x\S^{n-1}_+$, for generic $x \in \S^{n-1}_+$, and we will denote by $\partial_i:= \partial_{\tau_i}$ the tangential derivative along $\S^{n-1}_+$ in direction $\tau_i$.

We consider the basis $\{g_i^B\}_{i=1}^{n-1}$ for the tangent space $T_{w_{\lambda}(x)x}B^{\lambda}$ given by
    \[
        g_i^B = \partial_i(w_{\lambda}\, x)=w_{\lambda}\tau_i+\partial_iw_{\lambda} \, x,
    \]
for $x \in \S^{n-1}_+$. 


Define the vectors
\[
\omega^B(x)  = w_{\lambda}^{n-1}x-w_{\lambda}^{n-2}\partial_i w_{\lambda}\tau_i,
\qquad
\omega(x)  = (w_{\lambda}+u)^{n-1}x-(w_{\lambda}+u)^{n-2}\partial_i (w_{\lambda}+u)\tau_i,
\]
for $x \in \S^{n-1}_+$. Notice that $\Braket{\omega^B, g_j^B}= w_\lambda^{n-2} \Braket{w_\lambda x - \partial_i w_\lambda \tau_i, w_\lambda \tau_j + \partial_j w_\lambda x} = w_\lambda^{n-2} ( w_\lambda \partial_j w_\lambda - w_\lambda \partial_j w_\lambda) =0$ for any $x \in \S^{n-1}_+$, hence we can write $\nu_{B^\lambda}( w_\lambda \, x) = \omega^B/|\omega^B|$.

By 1-homogeneity of $\varphi$ and by \cref{lemma:formulas} we have
\[
\begin{split}
 P_{\lambda}(B^{\lambda}) &= \int_{\partial B^{\lambda}\cap H^+} \phi(\nu_{B^{\lambda}}) = \int_{\S_+^{n-1}}\phi\left( \nu_{B^\lambda}(w_\lambda(x) \, x)\right) w_\lambda^{n-2} \sqrt{w_\lambda^2 + |\nabla w_\lambda|^2}  
 \de\hausdorff^{n-1}(x)
 \\ &
 = \int_{\S_+^{n-1}}\phi( w_\lambda \, x - \partial_i w_\lambda \, \tau_i ) w_\lambda^{n-2} \de \hausdorff^{n-1}(x) =
 \int_{\S_+^{n-1}}\phi(\omega^B),
\end{split}
\]
where we used that $\sqrt{w_\lambda^2 + |\nabla w_\lambda|^2}  \nu_{B^\lambda}(w_\lambda(x) \, x) = w_\lambda \, x - \nabla w_\lambda  = w_\lambda \, x - \partial_i w_\lambda \, \tau_i $.

Analogously one gets
    \begin{equation}\label{eq:expansion-perimeter}
    \begin{split}
        P_{\lambda}(E) &= \int_{\S_+^{n-1}}\phi(\omega)\de\hausdorff^{n-1}\\
        &= \int_{\S_+^{n-1}}\phi(\omega^B) + \Braket{\grad\phi(\omega^B),\omega-\omega^B} + \frac12\Braket{\grad^2\phi(\omega^B)(\omega-\omega^B),(\omega-\omega^B)}+O(|\omega-\omega^B|^3)\de\hausdorff^{n-1}.
    \end{split}
    \end{equation}
A direct computation gives
    \begin{equation}\label{eq:ExpansionOmegaOmegaB}
    \begin{split}
        \omega &= \omega^B + w_{\lambda}^{n-2}\left[(n-1)ux-\left((n-2)w_{\lambda}^{-1}\partial_iw_{\lambda}u+\partial_i u\right)\tau_i\right]\\
        &\quad+\frac{(n-1)(n-2)}{2}w_{\lambda}^{n-3}u^2x-\left(\frac{(n-2)(n-3)}{2}w_{\lambda}^{n-4}\partial_iw_{\lambda}u^2+(n-2)w_{\lambda}^{n-3}u\partial_iu\right)\tau_i\\
        &\quad +\epsilon O\left(u^2+|\grad u|^2\right).
    \end{split}
    \end{equation}
    It is straightforward to see that $\nu_{B^{\lambda}}(w_{\lambda} \, x)=w_{\lambda}\, x+\lambda e_n$ for every $x\in \S^{n-1}_+$. 
    Moreover we observed above that $\nu_{B^\lambda}( w_\lambda \, x) = \omega^B/|\omega^B|$. 
    Hence
    \[
    \begin{split}
        \grad \phi(\omega^B)&=\frac{\omega^B}{|\omega^B|}-\lambda e_n=\nu_{B^{\lambda}}(w_{\lambda}\, x)-\lambda e_n=w_{\lambda}\, x,\\
        \grad^2\phi(\omega^B) &=\frac{1}{|\omega^B|}\left(\Id - \nu_{B^{\lambda}}(w_{\lambda}\,x)\otimes \nu_{B^{\lambda}}(w_{\lambda}\, x)\right),
    \end{split}
    \]
    for $x \in \S^{n-1}_+$.
    So the gradient term in the expansion for $P_{\lambda}(E)$ in~\eqref{eq:expansion-perimeter} becomes
    \begin{equation}\label{eq:zzExpansionGradient}
        \Braket{\grad\phi(\omega^B),\omega-\omega^B}=\Braket{w_{\lambda}\,x,\omega-\omega^B} \overset{\eqref{eq:ExpansionOmegaOmegaB}}{=} (n-1)w_{\lambda}^{n-1}u+\frac{(n-1)(n-2)}{2}w_{\lambda}^{n-2}u^2+\epsilon O\left(u^2+|\grad u|^2\right).
    \end{equation}
    To expand the Hessian term in \eqref{eq:expansion-perimeter} it is sufficient to consider the first order expansion of $\omega-\omega^B$, and we single out a multiple of $\nu_{B^{\lambda}}$ since it belongs to the kernel of $\grad^2 \phi(\omega^{B})$. We have that
    \[
    \begin{split}
        \omega-\omega^B&=w_{\lambda}^{n-2}\left[(n-1)ux-((n-2)w_{\lambda}^{-1}\partial_iw_{\lambda} u+\partial_i u)\tau_i\right] + O(u^2+|\grad u|^2)\\
        &=w_{\lambda}^{n-2}\left[(n-1)ux-\grad u-(n-2)u\left(\frac{\grad w_{\lambda}}{w_{\lambda}}-x\right)-(n-2)ux\right] + O(u^2+|\grad u|^2)\\
        & \overset{\eqref{eq:normal-to-graph}}{=}
        w_{\lambda}^{n-2}\left[ux-\grad u+(n-2)u\left(1+\frac{|\grad w_{\lambda}|^2}{w_{\lambda}^2}\right)^{1/2}\nu_{B^{\lambda}}\right] + O(u^2+|\grad u|^2),
    \end{split}
    \]
Exploiting \eqref{eq:normal-to-graph} once more, we compute the value of $\grad^2 \phi(\nu_{B^{\lambda}}) = \Id-\nu_{B^{\lambda}}\otimes\nu_{B^{\lambda}}$ on the orthonormal basis $\{x,\tau_1,\ldots,\tau_{n-1}\}$ of $\R^n$:
    \[
    \begin{split}
        \Braket{(\Id-\nu_{B^{\lambda}}\otimes \nu_{B^{\lambda}})x,x} &= 1-\frac{w_{\lambda}^2}{w_{\lambda}^2+|\grad w_{\lambda}|^2} = \frac{|\grad w_{\lambda}|^2}{w_{\lambda}^2+|\grad w_{\lambda}|^2},\\
        \Braket{(\Id-\nu_{B^{\lambda}}\otimes \nu_{B^{\lambda}})\tau_i,\tau_j} &= \delta_{ij}-\frac{\partial_iw_{\lambda}\partial_jw_{\lambda}}{w_{\lambda}^2+|\grad w_{\lambda}|^2},\\
        \Braket{(\Id-\nu_{B^{\lambda}}\otimes \nu_{B^{\lambda}})x,\tau_i} &= \frac{w_{\lambda}\partial_iw_{\lambda}}{w_{\lambda}^2+|\grad w_{\lambda}|^2}.
    \end{split}
    \]
    Since $\nu_{B^\lambda}\in \ker \grad^2 \phi(\omega^B)$, we deduce that
    \begin{equation}\label{eq:zzExpansionHessian}
    \begin{split}
        &\Braket{\grad^2\phi(\omega^B)(\omega-\omega^B),\omega-\omega^B}=
        \frac{w_{\lambda}^{2n-4}}{|\omega^B|}
        \Braket{(\Id-\nu_{B^{\lambda}}\otimes\nu_{B^{\lambda}})(ux-\grad u),ux-\grad u} +\epsilon O\left(u^2+|\grad u|^2\right)
        \\
        &\quad=
        \frac{w_{\lambda}^{2n-4}}{|\omega^B|} \left[\frac{|\grad w_{\lambda}|^2}{w_{\lambda}^2+|\grad w_{\lambda}|^2} u^2-2\frac{w_{\lambda}\partial_iw_{\lambda}}{w_{\lambda}^2+|\grad w_{\lambda}|^2}u\partial_i u + |\grad u|^2-\frac{\partial_iw_{\lambda}\partial_jw_{\lambda}}{w_{\lambda}^2+|\grad w_{\lambda}|^2}\partial_i u\partial_j u\right]+\epsilon O\left(u^2+|\grad u|^2\right).
    \end{split}
    \end{equation}
    
Plugging \eqref{eq:zzExpansionGradient} and \eqref{eq:zzExpansionHessian} into \eqref{eq:expansion-perimeter}, we find
\begin{equation}\label{eq:zzExpansion1}
    \begin{split}
        P_{\lambda}(E)-P_{\lambda}(B^{\lambda}) &= \int_{\S^{n-1}_+} (n-1)w_{\lambda}^{n-1}u+\frac{(n-1)(n-2)}{2}w_{\lambda}^{n-2}u^2\\
        &\qquad\qquad
        +\frac{1}{2}
        \frac{w_{\lambda}^{2n-4}}{|\omega^B|}
         \,\,
        \left[\frac{|\grad w_{\lambda}|^2}{w_{\lambda}^2+|\grad w_{\lambda}|^2} u^2-2\frac{\Braket{w_{\lambda}\grad w_{\lambda},u\grad u}}{w_{\lambda}^2+|\grad w_{\lambda}|^2} + |\grad u|^2 - \frac{\Braket{\grad w_{\lambda},\grad u}^2}{w_{\lambda}^2+|\grad w_{\lambda}|^2}\right]\\
        &\qquad\qquad +\epsilon O\left(u^2+|\grad u|^2\right).
    \end{split}
\end{equation}
We can turn the first order term in the previous expansion into a second order term by exploiting the volume constraint. Indeed, by \cref{lemma:formulas} and by assumptions we get
\begin{equation}\label{eq:zzExpansion2}
    \int_{\S^{n-1}_+} \frac1n w_\lambda^n = |B^\lambda| = |E| = \int_{\S^{n-1}_+} \frac1n (w_\lambda + u)^n
    = \int_{\S^{n-1}_+} \frac1n w_\lambda^n + w_\lambda^{n-1} u + \frac{n-1}{2}w_\lambda^{n-2} u^2 + O(u^3).
\end{equation}
Moreover, since $\min w_\lambda>0$, then there is $c=c(n,\lambda)\in(0,1)$ such that $|\grad w_{\lambda}|^2 /(w_{\lambda}^2+|\grad w_{\lambda}|^2) \le c$, and thus
\begin{equation}\label{eq:zzExpansion3}
        |\grad u|^2-\frac{\Braket{\grad w_{\lambda},\grad u}^2}{w_{\lambda}^2+|\grad w_{\lambda}|^2}\geq (1-c^2)|\grad u|^2.
\end{equation}
On the other hand, by applying the standard inequality $ab\leq \frac{\delta a^2}{2}+\frac{b^2}{2\delta}$ with $a=|\nabla w_\lambda| \,|\grad u|$ and $b= w_\lambda\,|u|$ and $\delta= \delta(n,\lambda)>0$ small enough, we see that
\begin{equation}\label{eq:zzExpansion4}
        \left|2\frac{\Braket{w_{\lambda}\grad w_{\lambda},u\grad u}}{w_{\lambda}^2+|\grad w_{\lambda}|^2}\right|
        \le \frac{2}{w_{\lambda}^2+|\grad w_{\lambda}|^2} \left( \frac\delta2 |\nabla w_\lambda|^2 |\grad u|^2 + \frac{w_\lambda^2 u^2}{2 \delta}\right) 
        \leq \frac{1-c^2}{2}|\grad u|^2 + Cu^2,
\end{equation}
where $C>0$ depends only on $n, \lambda$. Applying \eqref{eq:zzExpansion2}, \eqref{eq:zzExpansion3}, \eqref{eq:zzExpansion4} in \eqref{eq:zzExpansion1}, the statement follows with
\[
    \begin{split}
        B(u)=\int_{\S^{n-1}_+}
        \frac{1-n}{2} w_\lambda^{n-2} u^2
        + 
        \frac{1}{2}
         \frac{w_{\lambda}^{2n-4}}{|\omega^B|}
         \,\,
        \Bigg\{|\grad u|^2- \frac{\Braket{\grad w_{\lambda},\grad u}^2}{w_{\lambda}^2+|\grad w_{\lambda}|^2} &- 2\frac{\Braket{w_{\lambda}\grad w_{\lambda},u\grad u}}{w_{\lambda}^2+|\grad w_{\lambda}|^2}\\
        &\quad+\frac{|\grad w_{\lambda}|^2}{w_{\lambda}^2+|\grad w_{\lambda}|^2}  u^2\Bigg\}\de\hausdorff^{n-1}.
    \end{split}
    \]
\end{proof}

\begin{proposition}[Fuglede-type estimate for capillarity problems]\label{prop:FugledeCapillarity}
There exist $C_{\ref{prop:FugledeCapillarity}}, \epsilon_{\ref{prop:FugledeCapillarity}}>0$ depending on $n,\lambda$ such that the following holds.
Let $E \subset H$ be an open bounded set with measure $|E|=|B^\lambda|$. Suppose that the boundary of $E$ is a $C^1$-graph over $\S^{n-1}_+$, see \cref{def:DistanzaC1}, with respect to a function $f= w_\lambda + u$, with $\|u \|_{C^1(\S^{n-1}_+)} \le\epsilon\le \epsilon_{\ref{prop:FugledeCapillarity}}$. Then, denoting $\barycenter_H E := \frac{1}{|E|}\int_E (x-\Braket{x,e_n}e_n)\de x$, there holds
\begin{equation*}
    \| u \|^2_{H^1(\S^{n-1}_+)} \le C_{\ref{prop:FugledeCapillarity}} (D_\lambda(E)+|\barycenter_H E|^2).
\end{equation*}
\end{proposition}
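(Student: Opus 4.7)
The plan is to combine the perimeter expansion from \cref{prop:EspansionePlambda} with the quantitative Fraenkel inequality \cref{thm:PPquantitativaFraenkelCapillare}, supplemented by a Gagliardo--Nirenberg interpolation, in the spirit of \cite[Proposition~1.9]{Neumayer16} but adjusted to accommodate the extra barycentric contribution on the right-hand side.

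\textbf{Step 1 (Master inequality).} From \cref{prop:EspansionePlambda} and the lower bound $B(u)\ge c_{\ref{prop:EspansionePlambda}}\int|\nabla u|^2-C_{\ref{prop:EspansionePlambda}}\int u^2$, for $\epsilon_{\ref{prop:FugledeCapillarity}}\le\epsilon_{\ref{prop:EspansionePlambda}}$ sufficiently small the remainder $\epsilon\,O(\|u\|_{H^1}^2)$ can be absorbed into the positive gradient term, yielding
\[
    \|u\|_{H^1(\S^{n-1}_+)}^2 \le C_1\, D_\lambda(E) + C_2\, \|u\|_{L^2(\S^{n-1}_+)}^2.
\]
Hence it suffices to bound $\|u\|_{L^2}^2$ by $C(D_\lambda(E)+|\barycenter_H E|^2)$.

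\textbf{Step 2 ($L^1$ bound via Fraenkel and barycenter).} Since $w_\lambda$ is bounded below on $\S^{n-1}_+$ and $|u|\le\epsilon\ll 1$, a polar-coordinate computation gives $\|u\|_{L^1(\S^{n-1}_+)}\le C\,|E\Delta B^\lambda|$. Let $x^*\in\partial H$ realize the Fraenkel infimum of $E$ relative to $B^\lambda$, so that $|E\Delta(B^\lambda+x^*)|=\alpha_\lambda(E)\,|B^\lambda|$. Since $E\subset B_{1+\epsilon}$ and $\alpha_\lambda(E)$ is small by $C^1$-closeness of $E$ to $B^\lambda$, the optimal $x^*$ automatically satisfies $|x^*|=O(1)$, so $|(B^\lambda+x^*)\Delta B^\lambda|\le C|x^*|$ and both $E$ and $B^\lambda+x^*$ are contained in a common ball of universal radius. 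Using $\barycenter_H(B^\lambda+x^*)=x^*$,
\[
    |x^*-\barycenter_H E| \le \frac{C}{|B^\lambda|}\,|E\Delta(B^\lambda+x^*)| = C\,\alpha_\lambda(E),
\]
so $|x^*|\le|\barycenter_H E|+C\alpha_\lambda(E)$, and combining with the triangle inequality,
\[
    \|u\|_{L^1}\le C\,(\alpha_\lambda(E)+|\barycenter_H E|).
\]
Squaring and invoking \cref{thm:PPquantitativaFraenkelCapillare}, $\alpha_\lambda^2(E)\le C\,D_\lambda(E)$, yields
\[
    \|u\|_{L^1}^2 \le C\,\bigl(D_\lambda(E)+|\barycenter_H E|^2\bigr).
\]

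\textbf{Step 3 (Gagliardo--Nirenberg interpolation and absorption).} The Gagliardo--Nirenberg inequality on the bounded Lipschitz domain $\S^{n-1}_+$ yields, for some $\theta\in(0,1)$ depending on $n$,
\[
    \|u\|_{L^2(\S^{n-1}_+)}^2 \le C\,\|u\|_{H^1(\S^{n-1}_+)}^{2\theta}\,\|u\|_{L^1(\S^{n-1}_+)}^{2(1-\theta)},
\]
whence, by weighted Young's inequality, for every $\delta>0$,
\[
    \|u\|_{L^2}^2 \le \delta\,\|u\|_{H^1}^2 + C_\delta\,\|u\|_{L^1}^2.
\]
Plugging this into Step 1 and choosing $\delta$ so that $C_2\delta\le 1/2$, the $\delta\|u\|_{H^1}^2$ term is absorbed into the left-hand side, and Step 2 then gives
\[
    \|u\|_{H^1}^2 \le C\,D_\lambda(E) + C\,\|u\|_{L^1}^2 \le C\,\bigl(D_\lambda(E)+|\barycenter_H E|^2\bigr),
\]
as claimed. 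The hardest part is Step 2, specifically the estimate $|x^*-\barycenter_H E|\le C\alpha_\lambda(E)$: it requires a uniform a priori control on the diameter of $E\cup(B^\lambda+x^*)$ which is insensitive to $x^*$, extracted from the $C^1$-smallness of $u$ together with the fact that a large $|x^*|$ would automatically render $\alpha_\lambda(E)$ large, in contradiction with the smallness of $\alpha_\lambda$ forced by the $C^1$ hypothesis.
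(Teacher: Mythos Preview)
Your proof is correct and follows essentially the same strategy as the paper's: both combine the perimeter expansion of \cref{prop:EspansionePlambda} with the quantitative Fraenkel inequality \cref{thm:PPquantitativaFraenkelCapillare} and a barycenter argument to bound $\|u\|_{L^1}^2$ by $D_\lambda(E)+|\barycenter_H E|^2$, then interpolate $L^2$ between $L^1$ and $H^1$ (you via Gagliardo--Nirenberg plus Young, the paper via an equivalent Nash-type inequality, \cref{lemma:NashIneq}) to absorb the $\|u\|_{L^2}^2$ term. The only cosmetic difference is the order of absorption: you absorb the $\epsilon\,O(\|u\|_{H^1}^2)$ remainder already in Step~1, whereas the paper carries it through and absorbs it together with the interpolation parameter $\delta$ at the end.
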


\begin{proof}
    The proof is inspired by \cite[Proposition~1.9]{Neumayer16}. To simplify the notation, let $b=\barycenter_H E\in\partial H$. We first claim that there exists a constant $C>0$, depending on $n,\lambda$, such that if $\epsilon_{\ref{prop:FugledeCapillarity}}$ is small enough then
    \begin{equation}\label{eq:L1-IsoperimetricDeficit}
        \|u\|_{L^1(\S^{n-1}_+)}^2\leq C (D_{\lambda}(E)+|b|^2).
    \end{equation}
    To prove this, we start by showing that $|E\symmdiff B^{\lambda}|\leq C (D_{\lambda}(E)^{1/2}+|b|)$. The quantitative inequality from \cref{thm:PPquantitativaFraenkelCapillare} guarantees that there exists a point $z\in\partial H$ such that $|E\symmdiff (z+B^{\lambda})|\leq C D_{\lambda}(E)^{1/2}$. Applying the triangle inequality and \cite[Lemma~17.9]{MaggiBook} we obtain the estimate
    \begin{equation}\label{eq:asymmetry+translation}
        |E\symmdiff B^{\lambda}| \leq C D_{\lambda}(E)^{1/2}+|B^{\lambda}\symmdiff(z+B^{\lambda})|\leq C D_{\lambda}(E)^{1/2} + 2|z|P(B^{\lambda}),
    \end{equation}
    where $C$ depends on $n,\lambda$ only and may change from line to line.
    By definition of $\barycenter_H E$, writing $x=(x',x_n) \in \R^n$, since $\int_{B^\lambda}x'\de x=0$, we get that
    \begin{equation*}\label{eq:translation-bound}
        \begin{split}
            z-b&= \frac{1}{|B^{\lambda}|}\int_{B^{\lambda}}(z-b)\de x 
            = \frac{1}{|B^{\lambda}|}\int_{B^{\lambda}}(x'+z-b)\de x - \frac{1}{|E|}\int_E (x'-b)\de x\\
        &= \frac{1}{|B^{\lambda}|}\int_{B^{\lambda}+z-b}x'\de x-\frac{1}{|E|}\int_{E-b}x'\de x.
        \end{split}
    \end{equation*}
    We observe that $|z|\leq 2\diam E$, and since that set is bounded by a unversal constant (as soon as $\epsilon_{\ref{prop:FugledeCapillarity}}<1$), then there exists a constant $C$ depending only on $n$ and $\lambda$ such that, for any $x\in B^{\lambda}+z-b$ and any $x\in E$, we have that $|x|\leq C$. Hence, using again \cref{thm:PPquantitativaFraenkelCapillare}, we estimate
    \[
        |z| \leq 
        \frac{1}{|B^\lambda|} \int_{(B^{\lambda}+z-b) \Delta (E-b)} |x'| \de x  + |b|
        \le
        C|E\symmdiff (z+B^{\lambda})|+|b| \leq CD_{\lambda}(E)^{1/2}+|b|.
    \]
    Plugging the latter estimate in~\eqref{eq:asymmetry+translation} we obtain that $|E\symmdiff B^{\lambda}|\leq C(D_{\lambda}(E)^{1/2}+|b|)$. In order to obtain~\eqref{eq:L1-IsoperimetricDeficit}, we may bound $\|u\|_{L^1}$ from above in terms of $|E\symmdiff B^{\lambda}|$. We perform an expansion as in~\eqref{eq:VolumeExpansion} to get
    \[
    \begin{split}
        |E\setminus B^{\lambda}| &= 
        \int_{\S^{n-1}_+} 
        \int_{w_\lambda}^{w_\lambda + u_+} r^{n-1} \de r \de \hausdorff^{n-1}
        =
        \int_{\{u>0\}} \frac{1}{n}(w_\lambda + u)^n - \frac{w_\lambda^n}{n} \de \hausdorff^{n-1}
        \\
        &=
        \int_{\{u>0\}}
        w_{\lambda}^{n-1}u+O(u^2)\de \hausdorff^{n-1} \geq C\int_{\{u>0\}}u\de \hausdorff^{n-1},
    \end{split}
    \]
    where the last estimate follows by choosing $\epsilon_{\ref{prop:FugledeCapillarity}}$ small enough, since $\inf w_\lambda >0$ on $\S^{n-1}_+$.
    An analogous computation shows $|B^{\lambda}\setminus E| \geq C\int_{\{u<0\}}(-u)\de\hausdorff^{n-1}$. Hence $|E\symmdiff B^{\lambda}|\geq C\|u\|_{L^1}$. Combining this with the upper bound for $|E\symmdiff B^{\lambda}|$ obtained before we readily get~\eqref{eq:L1-IsoperimetricDeficit}.
    
    Taking now $\epsilon_{\ref{prop:FugledeCapillarity}}<\epsilon_{\ref{prop:EspansionePlambda}}$ and applying \cref{prop:EspansionePlambda} we obtain that
    \[
    \begin{split}
        \|u\|_{H^1(\S_+^{n-1})}^2 =\|u\|_{L^2(\S_+^{n-1})}^2+\|\grad u\|_{L^2(\S_+^{n-1})}^2 \leq C\left[D_{\lambda}(E) + \|u\|_{L^2(\S_+^{n-1})}^2\right]+\epsilon O \left( \|u \|^2_{H^1(\S^{n-1}_+)} \right).
    \end{split}
    \]
    Let $\delta>0$ to be chosen. Applying \cref{lemma:NashIneq} with $\Sigma=\S^{n-1}_+$ we get
    \[
    \begin{split}
        \|u\|_{H^1(\S_+^{n-1})}^2&\leq C\left[D_{\lambda}(E)+
        \frac{C_{\ref{lemma:NashIneq}}}{\delta^{\alpha_n}}
        \|u\|_{L^1(\S_+^{n-1})}^2 \right] + (\epsilon+\delta)O \left( \|u \|^2_{H^1(\S^{n-1}_+)} \right)\\
        &\overset{\eqref{eq:L1-IsoperimetricDeficit}}{\leq} C\left( 1 +  \frac{C_{\ref{lemma:NashIneq}}}{\delta^{\alpha_n}}\right)(D_{\lambda}(E)+|b|^2)+(\epsilon+\delta)O \left( \|u \|^2_{H^1(\S^{n-1}_+)} \right).
    \end{split}
    \]
    To conclude the proof, it is sufficient to take $\delta=\epsilon_{\ref{prop:FugledeCapillarity}}$ small enough to absorb the last term on the right hand side in the last inequality.
\end{proof}

\subsection{Proof of the strong quantitative isoperimetric inequality for capillarity problems}\label{sec:proof-capillarity}

We recall the following basic lemma, which readily follows by the isoperimetric inequality \eqref{eq:zzIsoperimetricaCapillare} and by testing the functional with any rescaling of the isoperimetric bubble.

\begin{lemma}\label{lem:ProblemPlambda+LambdaVolumi}
For any $\bar\Lambda>n$, there holds
\begin{equation*}\label{eq:ProblemPlambda+LambdaVolumi}
    \inf\left\{
        P_\lambda(E) + \bar \Lambda \left| |E| - |B^\lambda| \right| \st E \subset H, \,\, |E|<+\infty
    \right\} = P_\lambda(B^\lambda),
\end{equation*}
and the infimum is only achieved by translations $B^\lambda + y$ of the standard bubble $B^\lambda$, for $y \in \partial H$.
\end{lemma}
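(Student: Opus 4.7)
The plan is to reduce the minimization to a one-dimensional analysis after applying the sharp isoperimetric inequality \eqref{eq:zzIsoperimetricaCapillare}.

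First, I would establish the easy upper bound by testing with $E=B^\lambda$: since $||B^\lambda|-|B^\lambda||=0$, the candidate value is exactly $P_\lambda(B^\lambda)$. Second, I would prove the matching lower bound. For any admissible $E\subset H$ with $|E|=v<+\infty$, the sharp isoperimetric inequality \eqref{eq:zzIsoperimetricaCapillare} gives
\[
P_\lambda(E)+\bar\Lambda\big||E|-|B^\lambda|\big|\;\ge\; g(v):= n|B^\lambda|^{\frac1n}v^{\frac{n-1}{n}}+\bar\Lambda\,|v-|B^\lambda||,
\]
so it suffices to show that $\inf_{v\ge 0}g(v)=g(|B^\lambda|)=n|B^\lambda|=P_\lambda(B^\lambda)$.

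The key (and only nontrivial) point is the one-variable analysis of $g$. On $[|B^\lambda|,+\infty)$ both summands of $g$ are increasing, so $g$ is strictly increasing there. On $(0,|B^\lambda|]$ one computes
\[
g'(v)=(n-1)|B^\lambda|^{\frac1n}v^{-\frac1n}-\bar\Lambda,
\]
which vanishes at the unique point $v^{*}=\big(\tfrac{n-1}{\bar\Lambda}\big)^n|B^\lambda|<|B^\lambda|$ (the inequality uses $\bar\Lambda>n>n-1$); thus $g$ is increasing on $(0,v^{*})$ and decreasing on $(v^{*},|B^\lambda|)$, so its minimum on $[0,|B^\lambda|]$ is attained at one of the endpoints. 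A direct comparison gives $g(0^{+})=\bar\Lambda|B^\lambda|>n|B^\lambda|=g(|B^\lambda|)$, where the strict inequality uses precisely the hypothesis $\bar\Lambda>n$. Therefore $g$ attains its global minimum on $[0,+\infty)$ uniquely at $v=|B^\lambda|$, with value $n|B^\lambda|=P_\lambda(B^\lambda)$, as required.

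Finally, for the characterization of minimizers I would trace back the equality cases. If $E$ realizes the infimum, the strict minimality at $v=|B^\lambda|$ just established forces $|E|=|B^\lambda|$, and the bound $P_\lambda(E)\ge n|B^\lambda|^{1/n}|E|^{(n-1)/n}$ must hold as an equality. The rigidity of the capillary isoperimetric inequality \eqref{eq:zzIsoperimetricaCapillare} (see \cite[Section 19]{MaggiBook}) then forces $E$ to be a translation along $\partial H$ of a rescaling of $B^\lambda$; combined with $|E|=|B^\lambda|$, the rescaling is trivial, giving $E=B^\lambda+y$ for some $y\in\partial H$. The only mildly delicate point in the whole argument is the endpoint comparison $g(0^{+})>g(|B^\lambda|)$, which is exactly the numerical content of the assumption $\bar\Lambda>n$.
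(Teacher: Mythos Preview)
Your proof is correct and follows exactly the route the paper indicates: reduce via the sharp isoperimetric inequality \eqref{eq:zzIsoperimetricaCapillare} to the one-variable function $g(v)$ (which amounts to testing the functional on rescalings of $B^\lambda$), and then check that the hypothesis $\bar\Lambda>n$ pins the minimum uniquely at $v=|B^\lambda|$. The rigidity step is also handled correctly.
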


We now get prepared for our formulation of the selection principle. From now on we let $\psi:\R\to \R$ be the function
\begin{equation}\label{eq:DefPSI}
    \psi(t) := \max\{ -100 , \min\{ t, 100\}\}.
\end{equation}
We define
\begin{equation}\label{eq:AlternativeBarycenter}
    \widetilde{b}(F) := \int_F \left( \psi(x_1), \ldots, \psi(x_{n-1}) \right) \de x,
\end{equation}
for any set $F\subset H$ with $|F|<+\infty$. 
Observe that if $F \subset B_{100}(0) \cap H$ then
\begin{equation*}\label{eq:RelationBarycenterBtilde}
    \widetilde{b}(F) = |F| \, {\rm bar}_H F.
\end{equation*}
Moreover
\begin{equation}\label{eq:LipschitzianitaDiBtilde}
\begin{split}
    \left| |\widetilde{b}(F)|^2 - |\widetilde{b}(G)|^2 \right| 
    &= \left||\widetilde{b}(F)| + |\widetilde{b}(G)| \right| \, \left| |\widetilde{b}(F)| - |\widetilde{b}(G)| \right| 
    \\&
    \le C(|F|+|G|) \left| \int_{F\Delta G} \left( \chi_F - \chi_G\right) (\psi(x_1),\ldots,\psi(x_{n-1}) ) \right|
    \\&
    \le C(|F|+|G|) |F\Delta G|,    
\end{split}
\end{equation}
for any $F,G\subset H$ with $|F|, |G| <+\infty$, for a universal constant $C>0$.\\
We recall that in \cref{def:capillarity-oscillation} we defined
\begin{equation*}
    \mu_{\lambda,0}^2(F) = \frac{1}{2} \int_{\partial^* F\cap H^+} \left|\nu_F(x)-\frac{x+\lambda e_n}{|x+\lambda e_n|}\right|^2\de \hausdorff^{n-1},
\end{equation*}
for any set of finite perimeter $F\subset H$ with $|F|<+\infty$.

We can fix $k_\lambda \in \N$ the least integer such that
\begin{equation}\label{eq:DefKlambda}
    k_\lambda\ge \max\{10, 4/(1-|\lambda|)\}.
\end{equation}
Hence for any set of finite perimeter $F\subset H$ with $|F|<+\infty$ and for any $k \in \N$ with $k\ge k_\lambda$, there holds
\begin{equation}\label{eq:PreConvention}
    P_\lambda(F) - \frac{1}{k}\mu_{\lambda,0}^2(F) \ge (1-|\lambda|) P(F,H^+) - \frac{2}{k} P(F,H^+) 
    \ge \frac{1-|\lambda|}{2} P(F,H^+) .
\end{equation}
Hence it is natural to define
\begin{equation}\label{eq:Convention}
    P_\lambda(F) - \frac{1}{k}\mu_{\lambda,0}^2(F) := + \infty,
\end{equation}
for any set $F\subset H$ with $|F|<+\infty$ and $P(F)=+\infty$, for any $k\ge k_\lambda$.

\begin{proposition}\label{prop:MinimizationProblemSelection}
    There exist $\Lambda_{\ref{prop:MinimizationProblemSelection}} >2n$, $k_{\ref{prop:MinimizationProblemSelection}} \ge k_\lambda, R_{\ref{prop:MinimizationProblemSelection}} \ge 10$,
    depending on $n,\lambda$ such that the following holds. For any $k \ge k_{\ref{prop:MinimizationProblemSelection}}$, $k \in \N$, adopting convention \eqref{eq:Convention}, the minimization problem
    \begin{equation}\label{eq:MinimizationProblemSelection}
        \inf \left\{
        \mathcal{F}_k(F) :=
            P_{\lambda}(F)+\Lambda_{\ref{prop:MinimizationProblemSelection}} \left||F|-|B^{\lambda}|\right|-\frac{1}{k}\mu_{\lambda,0}^2(F)+|\widetilde{b}(F)|^2
            \st 
            F\subset H, \,\, |F|<+\infty
        \right\}
    \end{equation}
    has an open minimizer $F_k$ such that
    \begin{itemize}
        \item $F_k \subset B_{R_{\ref{prop:MinimizationProblemSelection}}}(0)$,
    
         \item $|F_k|= |B^\lambda|$ for any $k\ge k_{\ref{prop:MinimizationProblemSelection}}$ sufficiently large,
    
         \item $\overline{\partial F_k \cap H^+}$ is a $C^{1,1/2}$-graph over $\S^{n-1}_+$ for any $k\ge k_{\ref{prop:MinimizationProblemSelection}}$ sufficiently large, and $\overline{\partial F_k \cap H^+} \to \overline{\partial B^\lambda \cap H^+}$ in $C^{1,\beta}$, for any $\beta \in (0,1/2)$, as $k\to \infty$, in the sense of \cref{def:DistanzaC1}.     
    \end{itemize}
\end{proposition}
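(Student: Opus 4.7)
The main obstacle is that the functional is defined without any a priori confinement, so direct minimization is not guaranteed to produce a minimizer. The plan is to first secure approximate minimizers via Ekeland's variational principle on the complete metric space $(\{F\subset H : |F|<\infty\}, d_{L^1})$, then use regularity/density estimates plus the Fraenkel-type inequality in \cref{thm:PPquantitativaFraenkelCapillare} to force confinement uniformly in the approximation parameter, and finally pass to the limit. The starting observation is that by \cref{lem:RiscritturaDiMuLambdaZero}, the functional can be rewritten as
\[
\mathcal{F}_k(F)= \int_{\partial^* F \cap H^+} \Bigl( 1 - \lambda\Braket{e_n,\nu_F} - \tfrac{1}{k}\bigl(1 - \Braket{\nu_F, \tfrac{\lambda e_n}{|x-\Braket{x,e_n}e_n+\lambda e_n|}}\bigr)\Bigr) \de\hausdorff^{n-1} + \tfrac{1}{k}\int_F \tfrac{n-1}{|x+\lambda e_n|}\de x + \Lambda_{\ref{prop:MinimizationProblemSelection}}\bigl||F|-|B^\lambda|\bigr| + |\widetilde b(F)|^2,
\]
so for $k\ge k_\lambda$ the integrand density is bounded below by $\tfrac{1-|\lambda|}{2}$ (cf.\ \eqref{eq:PreConvention}), and $\mathcal{F}_k$ is lower semicontinuous with respect to $L^1$ convergence (the volume term being even continuous, and $|\widetilde b(F)|^2$ Lipschitz in $L^1$ by \eqref{eq:LipschitzianitaDiBtilde}).

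Once lower semicontinuity is established, I would fix $\Lambda_{\ref{prop:MinimizationProblemSelection}}>2n$ (so that $\mathcal{F}_k \geq \tfrac{1-|\lambda|}{2}P_\lambda - O(1/k)$ forces equiboundedness of minimizing sequences by \cref{lem:ProblemPlambda+LambdaVolumi}) and, for each $j\in\N$, apply Ekeland's principle to obtain $E^k_j$ minimizing $F\mapsto \mathcal{F}_k(F) + \tfrac{1}{j}|F\Delta E^k_j|$ with $\mathcal{F}_k(E^k_j)\to \inf \mathcal{F}_k$ as $j\to\infty$. The minimality of $E^k_j$ against competitors $F$ differing from $E^k_j$ on a small ball, together with the rewriting above, makes $E^k_j$ a quasi-minimizer of the standard perimeter in $H^+$ with constants independent of $j,k$ (the $\mu_{\lambda,0}^2$ contribution is absorbed since its density is bounded, and the $\widetilde b$ term is Lipschitz in $L^1$). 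Thus the density estimates of \cref{thm:quasi-regularity} apply uniformly.

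To confine the sets, I would combine three facts: (i) by \cref{lem:ProblemPlambda+LambdaVolumi} and the choice of $\Lambda_{\ref{prop:MinimizationProblemSelection}}$, one has $\big|P_\lambda(E^k_j) - P_\lambda(B^\lambda)\big|+\big||E^k_j|-|B^\lambda|\big|\to 0$ as $k\to\infty$, $j\to\infty$; (ii) the quantitative Fraenkel inequality \eqref{eq:PPquantitativaFraenkelCapillare} then forces $E^k_j$ to be $L^1$-close to a translate $B^\lambda + y^k_j$ for some $y^k_j\in\partial H$; (iii) the term $|\widetilde b(E^k_j)|^2$ stays bounded along a minimizing sequence (just test with $B^\lambda$), hence the truncated barycenter $\widetilde b(E^k_j)$ is bounded, and via the explicit relation $\widetilde b(B^\lambda + y) = |B^\lambda|\cdot y$ when $|y|\le 10$ (using that the truncation $\psi$ acts as the identity there), together with the uniform density lower bounds, one concludes $|y^k_j|\le R_0$ for some constant $R_0$ depending only on $n,\lambda$. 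This yields $E^k_j \subset B_{R_{\ref{prop:MinimizationProblemSelection}}}$ for some $R_{\ref{prop:MinimizationProblemSelection}}$ independent of $j,k$, whence $L^1$-compactness extracts a subsequential limit $F_k$, and lower semicontinuity plus the Ekeland error $\tfrac{1}{j}|F\Delta E^k_j|\to 0$ gives that $F_k$ is a genuine minimizer of $\mathcal{F}_k$.

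Finally, $F_k$ itself is an $(\omega,r_0)$-minimizer of a weighted anisotropic capillarity perimeter (the integrand written above is a smooth anisotropy in $\nu_F$ coupled with a bounded volume perturbation), so the regularity theory of \cite{DePhilippisMaggi2015} recalled in \cref{thm:almost-regularity} yields that $\partial F_k$ is $C^{1,1/2}$ away from $\partial H$. Since $\mathcal{F}_k(F_k)\to P_\lambda(B^\lambda)$, \cref{lem:ProblemPlambda+LambdaVolumi} forces $F_k\to B^\lambda + y_\infty$ in $L^1$ up to subsequences, and the $|\widetilde b|^2$ penalization, which vanishes at $y_\infty=0$ only (among translates of $B^\lambda$), pins down $y_\infty=0$; this also yields $|F_k|=|B^\lambda|$ exactly for $k$ large, by a first variation argument combined with the $C^1$ convergence. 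Uniform $C^{1,1/2}$ bounds together with $L^1$-convergence then upgrade the convergence to $C^{1,\beta}$ for every $\beta<1/2$, in the graphical sense of \cref{def:DistanzaC1}. The most delicate step is the confinement argument in the third paragraph, which crucially exploits the barycentric penalty to remove the translational degeneracy of the capillarity problem.
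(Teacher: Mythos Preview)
Your plan matches the paper's proof essentially step for step: Ekeland's principle on $(\{F\subset H:|F|<\infty\},|\cdot\Delta\cdot|)$ to produce approximate minimizers, uniform quasi-minimality and density estimates, confinement via the Fraenkel inequality \cref{thm:PPquantitativaFraenkelCapillare} combined with the barycentric penalty $|\widetilde b|^2$, passage to a genuine minimizer $F_k$, and then almost-minimality for the anisotropic integrand $\Phi_k$ so as to invoke \cref{thm:almost-regularity}. The only point worth sharpening is that the paper does \emph{not} fix $\Lambda$ at the outset but determines it at the very end, by testing $F_k$ against its rescaling $s_kF_k$ with $|s_kF_k|=|B^\lambda|$ and showing that $|F_k|\neq|B^\lambda|$ would force $\Lambda\le C(n,\lambda)$; your ``first variation argument'' is in the same spirit, but be aware that this competitor comparison is precisely where $\Lambda_{\ref{prop:MinimizationProblemSelection}}$ gets chosen.
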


\begin{proof}
Let $\Lambda> 2n$ and let $k \in \N$ with $k\ge k_\lambda$. We will fix $\Lambda=\Lambda_{\ref{prop:MinimizationProblemSelection}}$, depending on $n,\lambda$ only, at the end of the proof.
If $B^\lambda$ is a minimizer for \eqref{eq:MinimizationProblemSelection} for some $k\ge k_\lambda$, we can take $F_k= B^\lambda$, and any choice of $k_{\ref{prop:MinimizationProblemSelection}} \ge k_\lambda, R_{\ref{prop:MinimizationProblemSelection}} \ge 10$ satisfies the claim for such $k$'s. Hence, without loss of generality, we can assume that
\begin{equation*}
    \inf \left\{
    \mathcal{F}_k(F) 
        \st 
        F\subset H, \,\, |F|<+\infty
    \right\}
    < \mathcal{F}_k(B^\lambda) = P_\lambda(B^\lambda),
\end{equation*}
for any $k\ge k_\lambda$.

For any $k\ge k_\lambda$, let $\{\tilde E_j^k \st j \in \N, \, j\ge 1\}$ be a minimizing sequence for the $k$-th problem \eqref{eq:MinimizationProblemSelection} such that
\begin{equation*}
    \mathcal{F}_k(\tilde E_j^k) \le \min\left\{ \inf \left\{
    \mathcal{F}_k(F) 
        \st 
        F\subset H, \,\, |F|<+\infty
    \right\} 
    + \frac1j
    , \mathcal{F}_k(B^\lambda) \right\}
\end{equation*}

We claim that there exist $\tilde{k}_{\ref{prop:MinimizationProblemSelection}}= \tilde{k}_{\ref{prop:MinimizationProblemSelection}}(n,\lambda)\ge k_\lambda$,
$K_{\ref{prop:MinimizationProblemSelection}} = K_{\ref{prop:MinimizationProblemSelection}}(n,\lambda,\Lambda)\ge 1$, $r_{\ref{prop:MinimizationProblemSelection}} = r_{\ref{prop:MinimizationProblemSelection}}(n,\Lambda, \lambda)  \in (0,1)$ such that there exists a new minimizing sequence $\{E_j^k \st j \in \N,\, j \ge 1\}$ for the $k$-th problem \eqref{eq:MinimizationProblemSelection} such that $E_j^k$ is a $(K_{\ref{prop:MinimizationProblemSelection}}, r_{\ref{prop:MinimizationProblemSelection}})$-quasi-minimizer for any $k \ge \tilde{k}_{\ref{prop:MinimizationProblemSelection}}$ and any $j \ge 1$.\\
To obtain the new sequence $\{E_j^k \st j \ge 1\}$, we apply Ekeland's variational principle, recalled in \cref{thm:Ekeland}, on the complete metric space
\[
    (\mathbb X,\d) := \Big( \{ F\subset H \st |F|<+\infty \}, \,\,\, \d(F,G):= \|\chi_F - \chi_G\|_{L^1(H)} =  |F \Delta G| \} \Big),
\]
and on the function $\mathcal{F}_k:\mathbb X \to [0,+\infty]$ for $k \ge k_\lambda$. In fact, the function $\mathcal{F}_k$ is lower semicontinuous on $(\mathbb X,\d)$ for any $k\ge k_\lambda$. Indeed, the summand $F \mapsto\Lambda\left| |F| - |B^\lambda| \right| + |\widetilde{b}(F)|^2$ is continuous with respect to $L^1(H)$ convergence. 
While, by \cref{lem:RiscritturaDiMuLambdaZero}, we can rewrite the other terms in $\mathcal{F}_k$ as
    \begin{equation}\label{eq:RewritingAsAnisotropicPerimeter}
    \begin{split}
        P_{\lambda}(F)&-\frac{1}{k}\mu_{\lambda,0}^2(F) =\\
        &= \int_{\partial^* F\cap H^+}1-\lambda\Braket{\nu_F(x),e_n}-\frac{1}{k}\left(1-\Braket{\nu_F(x),\frac{\lambda e_n}{\left|x-\Braket{x,e_n}e_n+\lambda e_n\right|}}\right)\de\hausdorff^{n-1}
         \\
        &\qquad\qquad\qquad\qquad\qquad\,\,\,\qquad
        -\frac{1}{k}\int_F\frac{n-1}{|x+\lambda e_n|}\de x\\
        &=
        P_{\Phi_k}(F) -\frac{1}{k}\int_F\frac{n-1}{|x+\lambda e_n|}\de x,
    \end{split}
    \end{equation}
    where $P_{\Phi_k}(F) := \int_{\partial^*F \cap H^+}\Phi_k(x, \nu_F) \de \hausdorff^{n-1}$ is the anisotropic perimeter with density $\Phi_k:H^+ \times \R^n \to \R$ defined by
    \begin{equation}\label{eq:DefPhik}
        \Phi_k(x, \nu) := \left( 1- \frac{1}{k}\right) |\nu| + \Braket{\nu, \left( \frac{1}{k}  \, \frac{1}{\left|x-\Braket{x, e_n}e_n + \lambda e_n\right|} -1\right) \lambda e_n }.
    \end{equation}
    The volume term $\int_F\frac{n-1}{|x+\lambda e_n|}\de x$ appearing in \eqref{eq:RewritingAsAnisotropicPerimeter} is clearly continuous with respect to $L^1(H)$ convergence of sets. On the other hand, for any $|\nu|=1$ there holds
    \[
    \begin{split}
        |\Phi_k(x,\nu)|
        &\ge 1- \frac{1}{k} - |\lambda|\left( 1 + 
        \frac{1}{k}  \, \frac{1}{|x-\Braket{x, e_n}e_n + \lambda e_n|} \right) 
        \ge 1- \frac{1}{k} - |\lambda|\left( 1 + 
        \frac{1}{k|\lambda|} \right) >0,
    \end{split}
    \]
for any $x\in H^+$ for any $k \ge k_\lambda$. Hence, by Reshetnyak lower semicontinuity theorem (see \cite[Theorem 2.38]{AmbrosioFuscoPallara} or \cite[Theorem~1.2]{Spector2011}), it follows that $P_{\Phi_k}$ is lower semicontinuous with respect to $L^1(H)$ convergence of sets.

Therefore, we can apply Ekeland's variational principle \cref{thm:Ekeland} with $x_0= \tilde{E}_j^k$, $\eps=1/j$ and $s=1$, for any $k \ge k_\lambda$, $j\ge 1$. We obtain sets $E_j^k \subset H$ with finite measure such that
\begin{equation}\label{eq:EkelandApplied}
\begin{split}
    \mathcal{F}_k(E_j^k) &\le \min\left\{ \inf \left\{
    \mathcal{F}_k(F) 
        \st 
        F\subset H, \,\, |F|<+\infty
    \right\} 
    + \frac1j
    , \mathcal{F}_k(B^\lambda) \right\}, \\
    \mathcal{F}_k(E_j^k)  &\le \mathcal{F}_k(G) + \frac1j |G \Delta E_j^k|, 
\end{split}
\end{equation}
for any $G\subset H$ with finite measure, for any $k \ge k_\lambda$, $j\ge 1$. We are left to check that $E_j^k$ is a quasi-minimizer, uniformly for large $k$ and $j\ge 1$ as claimed above. 

We first notice that
\begin{equation}\label{eq:MassMinimizingSeqCloseToBlambda}
   \frac12 \le \frac{|E_j^k|}{|B^\lambda|} \le \frac32,
\end{equation}
for any $k \ge k_\lambda$ and any $j\ge 1$. Indeed 
\begin{equation*}
    2n \left||E_j^k| - |B^\lambda| \right| < \Lambda  \left||E_j^k| - |B^\lambda| \right|
    \overset{\eqref{eq:PreConvention}}{\le} \mathcal{F}_k(E_j^k)
    \overset{\eqref{eq:EkelandApplied}}{\le} \mathcal{F}_k(B^\lambda) = P_\lambda(B^\lambda) = n |B^\lambda|.
\end{equation*}
Now let $G\subset H$ be a set of finite perimeter such that $G \Delta E_j^k \Subset B_r(x_0)$ for some $r\in(0,1), x_0 \in H$. 
Then
\begin{equation}\label{eq:zzmainstima}
    \begin{split}
        P_\lambda(E_j^k) &
        \overset{\eqref{eq:EkelandApplied}}{\le} \mathcal{F}_k(G) +  \frac1j |G \Delta E_j^k| -\Lambda \left| |E_j^k| - |B^\lambda| \right| + \frac{1}{k} \mu^2_{\lambda, 0}(E_j^k) - |\widetilde{b}(E_j^k)|^2 \\
        &\le \frac1j |G \Delta E_j^k|+ P_\lambda(G) +\Lambda \left| |G|-|E_j^k| \right| + \frac{1}{k} \left( \mu^2_{\lambda, 0}(E_j^k) - \mu^2_{\lambda, 0}(G) \right) 
        + |\widetilde{b}(G) |^2 - |\widetilde{b}(E_j^k)|^2
        \\
        & \overset{\eqref{eq:LipschitzianitaDiBtilde}}{\le} P_\lambda(G) + \left(\Lambda + \frac{1}{j}\right) |G \Delta E_j^k| 
        +\frac{1}{2k}\bigg(\int_{\partial^* E_j^k\cap H^+\cap B_r(x_0)}\left|\nu_{E_j^k}(x)-\frac{x+\lambda e_n}{|x+\lambda e_n|}\right|^2\de\hausdorff^{n-1}
         \\ 
         &\qquad -\int_{\partial^* G\cap H^+\cap B_r(x_0)}\left|\nu_{G}(x)-\frac{x+\lambda e_n}{|x+\lambda e_n|}\right|^2\de\hausdorff^{n-1} \bigg) 
        +C( n, \lambda) |G \Delta E_j^k|
        \\
        &\le 
         P_\lambda(G) + \left(\Lambda + \frac{1}{j}+ C( n, \lambda) \right) C_n r P(G \Delta E_j^k , H^+) \\
         &\qquad +\frac{2}{k} \left( P(E_j^k, B_r(x_0)\cap H^+)+P(G, B_r(x_0)\cap H^+) \right) \\
        &\le P_\lambda(G) + \left(\left( \Lambda + \frac{1}{j}+ C( n, \lambda) \right) C_n r 
        +\frac{2}{k}\right)\cdot
        \left( P(E_j^k;B_r(x_0)\cap H^+)+P(G;B_r(x_0)\cap H^+) \right) 
    \end{split} 
\end{equation}
where we estimated
\[
\begin{split}
    |G \Delta E_j^k | &= |G \Delta E_j^k |^{\frac1n}|G \Delta E_j^k |^{\frac{n-1}{n}} \le |B_r(x_0)|^{\frac1n} |G \Delta E_j^k |^{\frac{n-1}{n}} \le C_n r P(G \Delta E_j^k, H^+)\\
    &\le
    C_nr\left( P(E_j^k;B_r(x_0)\cap H^+)+P(G;B_r(x_0)\cap H^+) \right) ,
\end{split}
\]
for a dimensional constant $C_n$ obtained using the isoperimetric inequality in a half-space. Now recalling that
\[
P_\lambda(G) = \int_{\partial^* G} 1-\lambda\Braket{\nu_G, e_n}
= P_\lambda(E_j^k)
+
\int_{\partial^* G \cap B_r(x_0) \cap H^+} 1-\lambda\Braket{\nu_G, e_n}
- \int_{\partial^* E_j^k \cap B_r(x_0) \cap H^+} 1-\lambda\Braket{\nu_{E_j^k}, e_n},
\]
we conclude that
\begin{equation*}
\begin{split}
    (1-|\lambda|)& P(E_j^k, B_r(x_0)\cap H^+)
    \le (1+|\lambda|) P(G,B_r(x_0)\cap H^+) +\\
    &\qquad+\left(\left( \Lambda +\frac{1}{j}+ C( n, \lambda) \right) C_n r 
        +\frac{2}{k}\right)
        \left( P(E_j^k;B_r(x_0)\cap H^+)+P(G;B_r(x_0)\cap H^+) \right) 
\end{split}
\end{equation*}
It follows that there exist $\tilde{k}_{\ref{prop:MinimizationProblemSelection}}\ge k_\lambda$ depending on $n,\lambda$, and $r_{\ref{prop:MinimizationProblemSelection}} >0$ depending on $\Lambda, n,\lambda$ such that, as long as $r \le r_{\ref{prop:MinimizationProblemSelection}}$, there holds
\begin{equation*}
    P(E_j^k,B_r(x_0)\cap H^+) \le K_{\ref{prop:MinimizationProblemSelection}} P(G,B_r(x_0)\cap H^+),
\end{equation*}
for a computable constant $K_{\ref{prop:MinimizationProblemSelection}} >0$ depending on $n,\lambda,\Lambda$, for any $k \ge \tilde{k}_{\ref{prop:MinimizationProblemSelection}}$ and any $j\ge 1$.

It follows from the regularity theory for quasi-minimizers, see \cref{thm:quasi-regularity}, that $E_j^k$ has an open representative and there exist $D_{\ref{prop:MinimizationProblemSelection}}, \rho_{\ref{prop:MinimizationProblemSelection}}\in(0,1)$ depending on $n,\lambda,\Lambda$ such that
\begin{equation}\label{eq:DensityMinimizingSeq}
    |E_j^k \cap B_\rho(x) | \ge D_{\ref{prop:MinimizationProblemSelection}} \rho^n,
\end{equation}
for any $\rho \le \rho_{\ref{prop:MinimizationProblemSelection}}$, $x \in \overline{E_j^k}$, $k \ge \tilde{k}_{\ref{prop:MinimizationProblemSelection}}$ and any $j\ge 1$.

\medskip

Up to enlarging the value of $\tilde{k}_{\ref{prop:MinimizationProblemSelection}}$, depending only on $n,\lambda$, we now want to prove the following crucial claim. Let $k_i\nearrow\infty$ be a subsequence, with $k_i\ge \tilde{k}_{\ref{prop:MinimizationProblemSelection}}$ for any $i$, and let $j_i \ge k_i$ be another subsequence. Let $A_i:= E_{j_i}^{k_i}$. We claim that 
\begin{equation}\label{eq:ClaimKEY}
|A_i\symmdiff B^{\lambda}|\leq \frac{\bar C_{n,\lambda}}{\sqrt{k_i}} \quad \forall i, \qquad
    \text{$\overline{\partial A_i \cap H^+} \to \overline{\partial B^\lambda \cap H^+}$ in Hausdorff distance, as $i\to \infty$.}
\end{equation}

    By definition of $A_i$, we have that $\mathcal{F}_{k_i}(A_i)\leq\mathcal{F}_{k_i}(B^{\lambda})$, and thus
    \begin{equation}\label{eq:subsub-Plambda}
        P_{\lambda}(A_i)+\Lambda||A_i|-|B^{\lambda}||-\frac{1}{k_i}\mu_{\lambda,0}^2(A_i)+|\widetilde b(A_i)|^2 \leq P_{\lambda}(B^{\lambda}),
    \end{equation}
    for any $i$.
    Since $k_i\geq k_{\lambda}$, by~\eqref{eq:PreConvention} we deduce that $P(A_i, H^+)\leq \frac{2}{1-|\lambda|}P_{\lambda}(B^{\lambda})$. Hence $\mu_{\lambda,0}^2(A_i) \le 2 P(A_i, H^+)\leq \frac{4}{1-|\lambda|}P_{\lambda}(B^{\lambda})$. Since $\Lambda>2n$, using the parameter $\bar\Lambda=\Lambda-n>n$ in \cref{lem:ProblemPlambda+LambdaVolumi}, we deduce from~\eqref{eq:subsub-Plambda} that
    \begin{equation}\label{eq:difference-measure-Plambda}
        n||A_i|-|B^{\lambda}||\leq \frac{1}{k_i}\mu_{\lambda,0}^2(A_i)\leq \frac{1}{k_i}\frac{4P_{\lambda}(B^{\lambda})}{1-|\lambda|} = \frac{1}{k_i}\frac{4n|B^{\lambda}|}{1-|\lambda|},
    \end{equation}
    for any $i$.
    Using \cref{thm:PPquantitativaFraenkelCapillare}, rearranging terms in~\eqref{eq:subsub-Plambda} and introducing the parameter $t_i:=\frac{|A_i|}{|B^{\lambda}|}$, we get
    \begin{equation}\label{eq:alpha-bar-Plambda}
        C_{n,\lambda}t_i^{\frac{n-1}{n}}P_{\lambda}(B^{\lambda})\alpha_{\lambda}^2(A_i)+|\widetilde b(A_i)|^2\leq P_{\lambda}(B^{\lambda})\left(1-t_i^{\frac{n-1}{n}}\right)+\frac{4P_{\lambda}(B^{\lambda})}{k_i(1-|\lambda|)},
    \end{equation}
    for some $C_{n,\lambda}>0$ and for any $i$, where we also used the bound $\mu_{\lambda,0}^2(A_i)\leq \frac{4}{1-|\lambda|}P_{\lambda}(B^{\lambda})$.\\
    Notice that $|t_i-1|\leq \frac{4}{k_i(1-|\lambda|)}$ for any $i$ thanks to~\eqref{eq:difference-measure-Plambda}. Therefore, we obtain $C_{n,\lambda}\alpha_{\lambda}^2(A_i)+|\widetilde b(A_i)|^2\leq \frac{C_{n,\lambda}'}{k_i}$ for any $i$, for some constants $C_{n,\lambda}, C'_{n,\lambda}>0$ that may change from line to line. This shows that, denoting by $x_i\in\partial H$ a point realizing the asymmetry $\alpha_{\lambda}(A_i)$, we get
    \begin{equation}\label{eq:zaqs}
        |A_i\symmdiff (x_i+B^{\lambda})| \leq \left|A_i\symmdiff (x_i+t_i^{1/n}B^{\lambda})\right|+\left|t_i^{1/n}B^{\lambda}\symmdiff B^{\lambda}\right|\overset{\eqref{eq:alpha-bar-Plambda}}{\leq} \frac{C_{n,\lambda}}{\sqrt{k_i}}+||A_i|-|B^{\lambda}|| \overset{\eqref{eq:difference-measure-Plambda}}{\leq} \frac{C_{n,\lambda}}{\sqrt{k_i}},
    \end{equation}
    for any $i$.\\
    Now it can be readily checked that $|\widetilde{b}(x_i + B^\lambda)| \ge c_{n,\lambda} \min\{1, |x_i|\}$ for any $i$. Moreover from~\eqref{eq:alpha-bar-Plambda} we also deduce that $|\widetilde b(A_i)|^2\leq \frac{C_{n,\lambda}}{k_i}$ for any $i$. Hence, arguing as in \eqref{eq:LipschitzianitaDiBtilde}, we estimate
    \begin{equation*}
        c_{n,\lambda} \min\{1, |x_i|\} \le |\widetilde b(A_i)| + \left| |\widetilde b(x_i+B^\lambda)| -  |\widetilde b(A_i)|\right|
        \le \frac{C_{n,\lambda}}{\sqrt{k_i}} + C_{n,\lambda} |A_i\symmdiff (x_i+B^{\lambda})|
        \overset{\eqref{eq:zaqs}}{\le} \frac{C_{n,\lambda}}{\sqrt{k_i}},
    \end{equation*}
    for any $i$. So there is $\bar{k}_{\ref{prop:MinimizationProblemSelection}}$ depending on $n,\lambda$ such that $\min\{1, |x_i|\} =|x_i|$ whenever $k_i \ge \bar{k}_{\ref{prop:MinimizationProblemSelection}}$, and the previous estimate yields an upper bound on $|x_i|$.
    Recalling \eqref{eq:zaqs}, this immediately implies that $|A_i\symmdiff B^{\lambda}|\leq \frac{\bar C_{n,\lambda}}{\sqrt{k_i}}$ for any $i$ such that $k_i \ge \bar{k}_{\ref{prop:MinimizationProblemSelection}}$. Up to updating $\tilde{k}_{\ref{prop:MinimizationProblemSelection}}$ with $\max\{ \tilde{k}_{\ref{prop:MinimizationProblemSelection}}, \bar{k}_{\ref{prop:MinimizationProblemSelection}}\}$, we can assume that $\tilde{k}_{\ref{prop:MinimizationProblemSelection}}\ge  \bar{k}_{\ref{prop:MinimizationProblemSelection}}$, whence $|A_i\symmdiff B^{\lambda}|\leq \frac{\bar C_{n,\lambda}}{\sqrt{k_i}}$ holds for any $i$.\\
    We already know that the sets $A_i$ satisfy the density estimates~\eqref{eq:DensityMinimizingSeq}. Up to decreasing $D_{\ref{prop:MinimizationProblemSelection}}, \rho_{\ref{prop:MinimizationProblemSelection}}$, we can assume that the same density estimates hold for $B^\lambda$.
    It is then standard to prove that whenever $\sqrt{k_i}>\tfrac{\bar C_{n,\lambda}}{D_{\ref{prop:MinimizationProblemSelection}}\rho_{\ref{prop:MinimizationProblemSelection}}^n}$, then $d_{\hausdorff}(\overline{\partial A_i\cap H^+}, \overline{\partial B^{\lambda}\cap H^+})\leq \rho_{\ref{prop:MinimizationProblemSelection}}$, hence    
    $|A_i\symmdiff B^{\lambda}|\geq D_{\ref{prop:MinimizationProblemSelection}}d_{\hausdorff}(\overline{\partial A_i\cap H^+}, \overline{\partial B^{\lambda}\cap H^+})^n$. Therefore
    \[
        d_{\hausdorff}(\overline{\partial A_i\cap H^+}, \overline{\partial B^{\lambda}\cap H^+})\leq C_{n,\lambda} k_i^{-1/2n}\qquad \text{when }\sqrt{k_i}>\frac{C_{n,\lambda}}{D_{\ref{prop:MinimizationProblemSelection}}\rho_{\ref{prop:MinimizationProblemSelection}}^n},
    \]
This completes the proof of claim \eqref{eq:ClaimKEY}.

\medskip

The key claim \eqref{eq:ClaimKEY} immediately implies
\begin{equation}\label{eq:FinalClaim}
    \exists\, \hat{k}_{\ref{prop:MinimizationProblemSelection}}(n,\lambda, \Lambda) \ge \tilde{k}_{\ref{prop:MinimizationProblemSelection}}, \, \hat{R}_{\ref{prop:MinimizationProblemSelection}}(n,\lambda, \Lambda)>0 \st \forall\, k \ge \hat{k}_{\ref{prop:MinimizationProblemSelection}}, \,\, j \ge k
    \implies
    E_j^k \subset B_{R_{\ref{prop:MinimizationProblemSelection}}}(0).
\end{equation}
Indeed, negating \eqref{eq:FinalClaim} means that for any $i \in \N$, $i \ge \tilde{k}_{\ref{prop:MinimizationProblemSelection}}$, there is $k_i\ge i$ and $j_i\ge k_i$ such that $|B_i(0) \setminus E_{j_i}^{k_i}| >0$. But \eqref{eq:DensityMinimizingSeq} together with \eqref{eq:ClaimKEY} give a contradiction.

For $k \ge \hat{k}_{\ref{prop:MinimizationProblemSelection}}$, letting $j\to \infty$ in the sequence $E_j^k$, by \eqref{eq:FinalClaim} we find a minimizer $F_k \subset B_{\hat{R}_{\ref{prop:MinimizationProblemSelection}}}$ for the minimization problem \eqref{eq:MinimizationProblemSelection}. Arguing as we did before with $F_k$ in place of $E_j^k$, it clearly follows that $F_k$ is a $(K_{\ref{prop:MinimizationProblemSelection}}, r_{\ref{prop:MinimizationProblemSelection}})$-quasi-minimizer, hence it has an open representative. Moreover, repeating the arguments of Step 2 above applied on the sequence $F_k$, we obtain that for every $k>\max\left\{\hat{k}_{\ref{prop:MinimizationProblemSelection}}, \left(\frac{C_{n,\lambda}}{D_{\ref{prop:MinimizationProblemSelection}}\rho_{\ref{prop:MinimizationProblemSelection}}^n}\right)^2\right\}$
\begin{equation}\label{eq:zzConvergenzaHausFk}
    |F_k\symmdiff B^{\lambda}|\leq \frac{\bar C_{n,\lambda}}{\sqrt{k}},\qquad d_{\hausdorff}(\overline{\partial F_k\cap H^+}, \overline{\partial B^{\lambda}\cap H^+})\leq C_{n,\lambda} k^{-1/2n}.
\end{equation}

\medskip

In order to improve convergence of boundaries to $C^{1,\beta}$-convergence, we wish to apply \cref{thm:almost-regularity}. We first show that there exist $\Lambda'_{\ref{prop:MinimizationProblemSelection}}, r'_{\ref{prop:MinimizationProblemSelection}}>0$, depending on $n,\lambda, \Lambda$, such that $F_k$ is a $(\Lambda'_{\ref{prop:MinimizationProblemSelection}}, r'_{\ref{prop:MinimizationProblemSelection}})$-almost minimizer of $P_{\Phi_k}$ in $H$, for any large $k$. 
So let $G\subset H$ be a set of finite perimeter such that $G \Delta F^k \Subset B_r(x_0)$ for some $r\in(0,\tfrac14)$. Assume first that $x_0 \in H \setminus B_{\frac12}(-\lambda e_n)$. By \eqref{eq:RewritingAsAnisotropicPerimeter}, by minimality of $F_k$, estimating $|\widetilde{b}(G)|^2-|\widetilde{b}(F_k)|^2$ as done in \eqref{eq:zzmainstima}, we find
\begin{equation*}
    \begin{split}
        P_{\Phi_k}(F_k) 
        &\le 
               P_{\Phi_k}(G) + (C(n,\lambda)+ \Lambda) |G \Delta F_k|
               + \frac{n-1}{k} \left(
               \int_{F_k} \frac{1}{|x+\lambda e_n|}
                - \int_{G} \frac{1}{|x+\lambda e_n|}
               \right)
        \\
        &
        \le 
            P_{\Phi_k}(G) + (C(n,\lambda)+ \Lambda) |G \Delta F_k|
               + \frac{n-1}{k} \left(
               \int_{G\Delta F_k} \frac{1}{|x+\lambda e_n|}
               \right).
    \end{split}
\end{equation*}
Since  $r\in(0,\tfrac14)$ and $x_0 \in H \setminus B_{\frac12}(-\lambda e_n)$, then $\frac{1}{|x+\lambda e_n|} \le 4$ almost everywhere on $G\Delta F_k$, hence
\begin{equation*}
    \begin{split}
        P_{\Phi_k}(F_k, B_r(x_0)) 
        &\le
            P_{\Phi_k}(G, B_r(x_0)) + \left( C(n,\lambda)+ \Lambda + \frac{4(n-1)}{k} \right) |G \Delta F_k|\\&
            \le 
             P_{\Phi_k}(G, B_r(x_0)) + \Lambda'_{\ref{prop:MinimizationProblemSelection}} |G \Delta F_k|.
    \end{split}
\end{equation*}
On the other hand, if $x_0 \in B_{\frac12}(-\lambda e_n)$ instead, by \eqref{eq:zzConvergenzaHausFk} for $k$ large we simply have $P_{\Phi_k}(F_k, B_r(x_0)) =0 \le P_{\Phi_k}(G, B_r(x_0)) + \Lambda'_{\ref{prop:MinimizationProblemSelection}} |G \Delta F_k|.$ 
Hence the claimed almost minimality of $F_k$ for $P_{\Phi_k}$ follows. Moreover, recalling the definition \eqref{eq:DefPhik} of $\Phi_k$, it can be readily checked that $\Phi_k$ is a regular elliptic integrand with $\Phi_k \in \mathcal{E}(H^+,l_1,l_2)$ for some constants $l_1\ge1, l_2 >0$, for any $k$ large, in the sense of \cref{def:unif-elliptic-perimeter}. Therefore, recalling \eqref{eq:zzConvergenzaHausFk}, we can apply \cref{thm:almost-regularity}, which implies that $\overline{\partial F_k \cap H^+}$ is a $C^{1,1/2}$-graph over $\S^{n-1}_+$ for large $k$ and that $\overline{\partial F_k \cap H^+} \to \overline{\partial B^\lambda \cap H^+}$ in $C^{1,\beta}$ for any $\beta \in (0,1/2)$, as $k\to \infty$.

\medskip

Finally we want to prove that there is a choice of $\Lambda = \Lambda_{\ref{prop:MinimizationProblemSelection}}(n,\lambda)$ such that, taking $R_{\ref{prop:MinimizationProblemSelection}}:=\hat{R}_{\ref{prop:MinimizationProblemSelection}}(n,\lambda,$ $\Lambda_{\ref{prop:MinimizationProblemSelection}})$ and $k_{\ref{prop:MinimizationProblemSelection}} = \hat{k}_{\ref{prop:MinimizationProblemSelection}}(n,\lambda,\Lambda_{\ref{prop:MinimizationProblemSelection}})$, then the statement follows. So we are left to prove that there is such a choice of $\Lambda$ so that $|F_k|=|B^\lambda|$ for any $k \ge k_{\ref{prop:MinimizationProblemSelection}}$ large enough.\\
Similarly to \cite[Step 3 in the proof of Theorem~1.1]{Neumayer16}, we test the minimality of $F_k$ with the competitor $s_kF_k$, where $s_k = (|B^{\lambda}|/|F_k|)^{1/n}$. We get
    \[
    \begin{split}
        P_{\lambda}(F_k)&+\Lambda||F_k|-|B^{\lambda}||-\frac{1}{k}\mu_{\lambda,0}^2(F_k)+
        \left| 
        \int_{F_k} \psi(x') \de x
        \right|^2
        \\ &
        \leq s_k^{n-1}P_{\lambda}(F_k)-\frac{1}{k}\mu_{\lambda,0}^2(s_kF_k)+s_k^{2n}
        \left| 
        \int_{F_k} \psi(s_k x') \de x
        \right|^2
        ,
    \end{split}
    \]
where $\psi(z'):=(\psi(z_1), \ldots, \psi(z_{n-1}))$ for $z = (z',z_n) \in \R^n$. Since $\psi$ is $1$-Lipschitz, then $|\psi(s_k x') - \psi(x')| \le |s_k-1||x'| \le 10 |s_k-1|$ for any $x \in F_k$ and $k \ge \hat{k}_{\ref{prop:MinimizationProblemSelection}}$ large enough by \eqref{eq:zzConvergenzaHausFk}. Hence
\begin{equation*}
\begin{split}
    \Lambda||F_k|-|B^{\lambda}|| & \leq |s_k^{n-1}-1| P_{\lambda}(F_k)
        +\frac{1}{k} \left( \mu_{\lambda,0}^2(F_k) -\mu_{\lambda,0}^2(s_kF_k) \right)
        +|s_k^{2n} -1|\, |\widetilde{b}(F_k)|^2
        \\
        &\qquad + s_k^{2n}(|\widetilde b(s_kF_k)|+|\widetilde{b}(F_k)|)(|\widetilde b(s_kF_k)|-|\widetilde{b}(F_k)|)\\
        &\leq |s_k^{n-1}-1| P_{\lambda}(F_k)
        +\frac{1}{k} \left( \mu_{\lambda,0}^2(F_k) -\mu_{\lambda,0}^2(s_kF_k) \right)
        +|s_k^{2n} -1|\, |\widetilde{b}(F_k)|^2
        \\
        &\qquad+ 10 s_k^{2n}(|\widetilde b(s_kF_k)|+|\widetilde{b}(F_k)|)|s_k-1|\, |F_k|
\end{split}
\end{equation*}
We already know that $F_k\to B^{\lambda}$ in $L^1(H)$, and thus $s_k\to1$. Moreover, there exists a geometric constant $C>0$ such that
    \begin{equation}\label{eq:GeometricEstimate}
        \left|v-\frac{w+z}{|w+z|}\right|^2\geq \left|v-\frac{w}{|w|}\right|^2-C|z|,
    \end{equation}
    for any $v,w,z\in\R^n$ with $|v|=1$, $|w|\geq 1/2$ and $|z|\leq 1/4$.    
    Recalling that $\partial F_k\cap H^+\subset B_{2}(-\lambda e_n)\setminus B_{1/2}(-\lambda e_n)$ for $k$ large, we can apply \eqref{eq:GeometricEstimate} with $v=\nu_{F_k}(x)$, $w=x+\lambda e_n$ and $z=(s_k-1)x$, for $x\in\partial F_k\cap H^+$.
    In particular, as also $P_{\lambda}(F_k), |\widetilde{b}(F_k)|, \mu_{\lambda,0}(F_k), P(F_k) \le C(n,\lambda)$ for any $k$ large, we obtain
    \[
    \begin{split}
        \Lambda &\left||F_k|-|B^{\lambda}|\right|\leq 
        C(|s_k^{n-1}-1| + |s_k^{2n} -1 | +  |s_k-1|)
        +\\
        & + \frac{1}{2k}\left(\int_{\partial^* F_k \cap H^+}\left|\nu_{F_k}(x)-\frac{x+\lambda e_n}{|x+\lambda e_n|}\right|^2\de\hausdorff^{n-1}
        -
        s_k^{n-1}\int_{\partial^* F_k \cap H^+}\left|\nu_{F_k}(x)-\frac{s_kx+\lambda e_n}{|s_kx+\lambda e_n|}\right|^2\de\hausdorff^{n-1}\right)\\
        &\overset{\eqref{eq:GeometricEstimate}}{\leq} 
        C(|s_k^{n-1}-1| + |s_k^{2n} -1 | +  |s_k-1|)
        + \frac{1}{2k} |s_k^{n-1}-1|\int_{\partial^* F_k \cap H^+}\left|\nu_{F_k}(x)-\frac{x+\lambda e_n}{|x+\lambda e_n|}\right|^2\de\hausdorff^{n-1}\\
        &\qquad + \frac{s_k^{n-1}}{2k}\int_{\partial^* F_k \cap H^+} C|s_k-1||x|\de\hausdorff^{n-1}\\
        &\leq C|s_k-1|,
    \end{split}
    \]
    where we used that the power functions $t\to |t^{n-1}-1|$ and $t\to|t^{2n}-1|$ are locally Lipschitz. On the other hand
    \[
    \left||F_k|-|B^{\lambda}|\right| = |F_k| |s_k^n -1| = |F_k| |s_k-1| (1+s_k+\ldots+s_k^{n-1}) \overset{\eqref{eq:MassMinimizingSeqCloseToBlambda}}{\ge} \frac 12 |B^\lambda| |s_k-1|.
    \]
    Therefore, if by contradiction $s_k\neq 1$ for some $k$ large, the last two estimates imply $\Lambda \le C=C(n,\lambda)$, which is impossible for a choice of $\Lambda$ sufficiently large depending on $n,\lambda$ only.
\end{proof}

\begin{theorem}[Strong quantitative inequality in capillarity problems]\label{thm:capillarityStrongQuantitative}
There exists $C_{\ref{thm:capillarityStrongQuantitative}}>0$ depending on $n,\lambda$ such that for any set of finite perimeter $E \subset H$ with $|E|<+\infty$ there holds
\begin{equation*}\label{eq:strong-quantitative-capillarity}
    \mu_\lambda^2(E) \le C_{\ref{thm:capillarityStrongQuantitative}} D_\lambda(E).
\end{equation*}
\end{theorem}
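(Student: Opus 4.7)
\emph{Strategy.} The plan is to combine the selection principle (\cref{prop:MinimizationProblemSelection}) with the Fuglede-type estimate (\cref{prop:FugledeCapillarity}) to force the minimizers $F_k$ of $\mathcal{F}_k$ to coincide with $B^\lambda$ for all sufficiently large $k$; then the minimality of $B^\lambda$ for $\mathcal{F}_{k_0}$ tested against a suitably translated competitor will directly yield the quantitative inequality. By the scaling invariance of both $\mu_\lambda^2$ and $D_\lambda$, I first reduce to the case $|E|=|B^\lambda|$.

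\emph{Rigidity of the minimizers.} For $k$ large, \cref{prop:MinimizationProblemSelection} gives $|F_k|=|B^\lambda|$, $F_k \subset B_{100}$, and $\overline{\partial F_k \cap H^+}$ realized as a $C^{1,\beta}$-graph of $w_\lambda + u_k$ over $\S^{n-1}_+$ with $\|u_k\|_{C^{1,\beta}}\to 0$. Observing that $\mathcal{F}_k(B^\lambda)=P_\lambda(B^\lambda)$ (since $\widetilde{b}(B^\lambda)=0$ and $\mu_{\lambda,0}^2(B^\lambda)=0$), the minimality of $F_k$ gives
\begin{equation*}
    P_\lambda(B^\lambda) D_\lambda(F_k) + |\widetilde{b}(F_k)|^2 \le \tfrac{1}{k}\mu_{\lambda,0}^2(F_k).
\end{equation*}
Applying \cref{lem:OScillationBoundedByH1Norm} and \cref{prop:FugledeCapillarity} to the right-hand side and using $|\widetilde{b}(F_k)|=|B^\lambda|\,|\barycenter_H F_k|$ (valid since $F_k \subset B_{100}$), I obtain
\begin{equation*}
    P_\lambda(B^\lambda) D_\lambda(F_k) + |B^\lambda|^2 |\barycenter_H F_k|^2 \le \tfrac{C_{n,\lambda}}{k}\bigl(D_\lambda(F_k) + |\barycenter_H F_k|^2\bigr).
\end{equation*}
For every $k \ge k_0$ with $k_0$ large enough (depending only on $n,\lambda$), this forces $D_\lambda(F_k)=0$ and $\barycenter_H F_k = 0$; the rigidity of the capillarity isoperimetric inequality then identifies $F_k$ with a $\partial H$-translate of $B^\lambda$, and the vanishing horizontal barycenter pins $F_k = B^\lambda$.

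\emph{Concluding with $B^\lambda$ as minimizer.} Fix such a $k_0$. For an arbitrary $E\subset H$ with $|E|=|B^\lambda|<+\infty$, I find $\tau_E \in \partial H$ such that $\widetilde{b}(E - \tau_E)=0$: the $i$-th component of $\tau \mapsto \widetilde{b}(E-\tau)$ depends only on $\tau_i$ through $\int_E \psi(x_i-\tau_i)\de x$, and tends to $\mp 100\,|E|$ as $\tau_i \to \pm\infty$, so the intermediate value theorem applies componentwise. By translation invariance of $P_\lambda$ along $\partial H$, the minimality inequality $\mathcal{F}_{k_0}(B^\lambda) \le \mathcal{F}_{k_0}(E-\tau_E)$ reduces to
\begin{equation*}
    \tfrac{1}{k_0}\mu_{\lambda,0}^2(E-\tau_E) \le P_\lambda(E) - P_\lambda(B^\lambda) = P_\lambda(B^\lambda) D_\lambda(E).
\end{equation*}
The identity $\mu_\lambda^2(E) = \inf_{\tau \in \partial H} \mu_{\lambda,0}^2(E-\tau)$, which is immediate from the definitions after a change of variables, then yields $\mu_\lambda^2(E) \le \mu_{\lambda,0}^2(E - \tau_E) \le k_0 P_\lambda(B^\lambda)\, D_\lambda(E)$, as desired.

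\emph{Main obstacle.} The delicate step is the rigidity argument for $F_k$: the barycentric penalization $|\widetilde{b}(F)|^2$ inside $\mathcal{F}_k$ must coercively control horizontal translations well enough to absorb the $|\barycenter_H F_k|^2$ term produced by \cref{prop:FugledeCapillarity}, and this hinges on the a priori boundedness of $F_k$ provided by \cref{prop:MinimizationProblemSelection}. The truncated design of $\widetilde{b}$ is precisely what allows the intermediate value theorem argument in the concluding step to succeed uniformly over arbitrary, possibly unbounded, competitors $E$.
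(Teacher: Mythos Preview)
Your proof is correct and takes a somewhat different organizational route from the paper. The paper argues by contradiction: assuming a sequence $E_k$ with $k^2 D_\lambda(E_k)<\mu_\lambda^2(E_k)$, it tests the minimality of $F_k$ against that contradicting sequence to obtain $0<\mu_{\lambda,0}^2(E_k)\le 2\mu_{\lambda,0}^2(F_k)$, and then the Fuglede estimate forces $F_k=B^\lambda$, hence $\mu_{\lambda,0}^2(F_k)=0$, a contradiction. You instead establish directly, by testing $F_k$ against $B^\lambda$, that $B^\lambda$ itself minimizes $\mathcal{F}_{k_0}$ for some fixed $k_0$, and then read off the inequality for an arbitrary $E$ by testing the minimality of $B^\lambda$ against $E-\tau_E$. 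Your approach is more constructive (it yields an explicit constant $k_0 P_\lambda(B^\lambda)$) and obtains \cref{cor:tilde-bar-quantitative} in the same stroke, since your intermediate inequality is precisely $\mu_{\lambda,0}^2(E-\tau_E)\le k_0 P_\lambda(B^\lambda)D_\lambda(E)$ with $\widetilde b(E-\tau_E)=0$. The paper's contradiction framework, on the other hand, is the traditional selection-principle packaging and makes the role of the contradicting sequence more visible; both routes rest on the same core inputs (\cref{prop:MinimizationProblemSelection}, \cref{lem:OScillationBoundedByH1Norm}, \cref{prop:FugledeCapillarity}).
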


\begin{proof}
By scale invariance of the oscillation asymmetry and of the deficit, we can prove the claim for sets having volume equal to $|B^\lambda|$. Suppose by contradiction that there exists a sequence of sets of finite perimeter $E_k\subset H$ such that $|E_k|=|B^{\lambda}|$ but $k^2D_{\lambda}(E_k)< \mu_{\lambda}^2(E_k)$, for any $k \in \N$ with $k\ge 1$. In particular, $E_k$ does not coincide with the translation of a truncated ball $B^{\lambda}$ for every $k\in\N$, and thus $D_{\lambda}(E_k)>0$. Without loss of generality, up to translation along $\partial H$ we can assume that $\widetilde{b}( E_k)=0$. Since $\mu_{\lambda}^2(E_k)\leq 2P(E_k;H^+)\leq \frac{2}{1-|\lambda|}P_{\lambda}(E_k)$, it follows that $E_k$ satisfies $(k^2-C)P_{\lambda}(E_k)\leq k^2P_{\lambda}(B^{\lambda})$ for some constant $C$ depending on $n$ and $\lambda$, so in particular $D_{\lambda}(E_k)\to0$ as $k\to\infty$. Moreover, by definition of $\mu_{\lambda}^2(E_k)$ and by the contradiction assumption, we have
\begin{equation}\label{eq:zzMuZeroEk}
    0< D_{\lambda}(E_k)< \frac{1}{k^2}\mu_{\lambda}^2(E_k)\leq\frac{1}{k^2}\mu_{\lambda,0}^2(E_k).
\end{equation}

We apply \cref{prop:MinimizationProblemSelection}. There exist constants $\Lambda_{\ref{prop:MinimizationProblemSelection}} >2n$, $k_{\ref{prop:MinimizationProblemSelection}} \ge k_\lambda, R_{\ref{prop:MinimizationProblemSelection}} \ge 10$,
depending on $n,\lambda$ such that, for any $k \ge k_{\ref{prop:MinimizationProblemSelection}}$, the minimization problem
\begin{equation*}
    \inf \left\{
    \mathcal{F}_k(F) :=
        P_{\lambda}(F)+\Lambda_{\ref{prop:MinimizationProblemSelection}} \left||F|-|B^{\lambda}|\right|-\frac{1}{k}\mu_{\lambda,0}^2(F)+|\widetilde{b}(F)|^2
        \st 
        F\subset H, \,\, |F|<+\infty
    \right\}
\end{equation*}
has an open minimizer $F_k$ such that 
\begin{itemize}
    \item $F_k \subset B_{R_{\ref{prop:MinimizationProblemSelection}}}(0)$, 

     \item $|F_k|= |B^\lambda|$ for any $k\ge k_{\ref{prop:MinimizationProblemSelection}}$ sufficiently large,

     \item for any $k\ge k_{\ref{prop:MinimizationProblemSelection}}$ sufficiently large, there exists $u_k\in C^{1,1/2}(\S^{n-1}_+)$ such that $\overline{\partial F_k \cap H}= \{(w_{\lambda}(x)+u_k(x))x:x\in\S^{n-1}_+\}$, where $w_\lambda$ was defined in \cref{def:DistanzaC1}, and $u_k\to 0$ in $C^{1, \beta}(\S^{n-1}_+)$ for any $\beta \in (0,1/2)$.   
\end{itemize}

Testing minimality of $F_k$ with the competitor $E_k$, which satisfies $\widetilde{b} (E_k)=0$ and $|E_k|=|B^{\lambda}|$, and exploiting \cref{lem:ProblemPlambda+LambdaVolumi}, we get the chain of estimates
    \begin{equation}\label{eq:energy-comparison-FE}
    \begin{split}
    P_{\lambda}(F_k)-\frac{1}{k}\mu_{\lambda,0}^2(F_k)+|\widetilde{b} (F_k)|^2
        &=P_{\lambda}(F_k)+\Lambda_{\ref{prop:MinimizationProblemSelection}} \left||F_k|-|B^{\lambda}|\right|-\frac{1}{k}\mu_{\lambda,0}^2(F_k)+|\widetilde{b} (F_k)|^2\\
        &\leq P_{\lambda}(E_k)-\frac{1}{k} \mu_{\lambda,0}^2(E_k)\\
        &\overset{\eqref{eq:zzMuZeroEk}}{\leq} P_{\lambda}(B^{\lambda})+\frac{P_{\lambda} (B^{\lambda})}{k^2}  \mu_{\lambda,0}^2(E_k) -\frac{1}{k}\mu_{\lambda,0}^2(E_k)\\
        &\leq P_{\lambda}(F_k)+\Lambda_{\ref{prop:MinimizationProblemSelection}} \left||F_k|-|B^{\lambda}|\right|+\frac{P_{\lambda} (B^{\lambda})}{k^2}  \mu_{\lambda,0}^2(E_k)-\frac{1}{k} \mu_{\lambda,0}^2(E_k)
        \\
        &=  P_{\lambda}(F_k)+\frac{P_{\lambda} (B^{\lambda})}{k^2}  \mu_{\lambda,0}^2(E_k)-\frac{1}{k} \mu_{\lambda,0}^2(E_k)
    \end{split}
    \end{equation}
    for any $k\ge k_{\ref{prop:MinimizationProblemSelection}}$ sufficiently large. Observe that
    \begin{equation}\label{eq:zwq}
        \frac{P_{\lambda} (B^{\lambda})}{k^2}  \mu_{\lambda,0}^2(E_k)-\frac{1}{k} \mu_{\lambda,0}^2(E_k) 
        \le -\frac{1}{2k} \mu_{\lambda,0}^2(E_k),
    \end{equation}
    for any $k$ large enough. Hence \eqref{eq:energy-comparison-FE} implies that for every $k$ large enough
    \begin{equation}\label{eq:mu-bar-estimate}
        0<  \mu_{\lambda,0}^2(E_k) \leq 2\mu_{\lambda,0}^2(F_k),
        \qquad\qquad
        |\widetilde{b} (F_k)|^2\leq \frac{1}{k}\mu_{\lambda,0}^2(F_k).
    \end{equation}
Since we know that $|F_k|=|B^{\lambda}|$ for $k\ge k_{\ref{prop:MinimizationProblemSelection}}$ large enough, using \eqref{eq:zwq} we may rearrange the terms in the first and third lines of~\eqref{eq:energy-comparison-FE} to get
    \[
    D_{\lambda}(F_k)\leq \frac{\mu_{\lambda,0}^2(F_k)}{k\cdot P_{\lambda}(B^{\lambda})} .
    \]
For $k$ large enough, we have $\|u_k\|_{C^1(\S^{n-1}_+)}<\min\{\eps_{\ref{lem:OScillationBoundedByH1Norm}},\epsilon_{\ref{prop:FugledeCapillarity}}\}$, thus we can apply \cref{lem:OScillationBoundedByH1Norm} and \cref{prop:FugledeCapillarity} to continue the previous estimate as
    \begin{equation}\label{eq:final-quantitative}
    \begin{split}
        D_{\lambda}(F_k)&\leq \frac{\mu_{\lambda,0}^2(F_k)}{k P_{\lambda}(B^{\lambda})} 
        \le \frac{1}{k P_{\lambda}(B^{\lambda})} C_{\ref{lem:OScillationBoundedByH1Norm}} \|u_k \|^2_{H^1(\S^{n-1}_+)}
        \leq \frac{1}{k P_{\lambda}(B^{\lambda})}
        C_{\ref{lem:OScillationBoundedByH1Norm}}
        C_{\ref{prop:FugledeCapillarity}}
        \left(D_{\lambda}(F_k)+|\barycenter_H F_k|^2\right).
    \end{split}
    \end{equation}
    Since $|F_k|=|B^\lambda|$ and $\barycenter_H F_k = |B^\lambda|^{-1}\widetilde{b}(F_k)$ by \eqref{eq:AlternativeBarycenter}, then \eqref{eq:mu-bar-estimate} implies $|\barycenter_H F_k|^2 \le \tfrac{1}{k|B^\lambda|^2} \mu^2_{\lambda,0}(F_k)$. Hence, using \cref{lem:OScillationBoundedByH1Norm} and \cref{prop:FugledeCapillarity} again as above, we find
    \[
        |\barycenter_H F_k|^2 \le \frac{\mu_{\lambda,0}^2(F_k)}{k|B^\lambda|^2}
        \leq \frac{1}{k|B^\lambda|^2}C_{\ref{lem:OScillationBoundedByH1Norm}}
        C_{\ref{prop:FugledeCapillarity}} \left(D_{\lambda}(F_k)+|\barycenter_H F_k|^2\right).
    \]
    Hence, for $k$ large enough, there holds $|\barycenter_H F_k|^2\leq D_{\lambda}(F_k)$. Plugging this estimate in \eqref{eq:final-quantitative} we deduce
    \[
    D_\lambda(F_k) \le \frac12 D_\lambda(F_k),
    \]
    for any $k$ large enough. Hence $D_\lambda(F_k)=0$ for $k$ large, which, recalling that $|F_k|=|B^\lambda|$ and that $|\barycenter_H F_k|^2\leq D_{\lambda}(F_k)$, implies that $F_k$ exactly coincides with the standard bubble $B^\lambda$ with zero barycenter. In particular, $\mu^2_{\lambda,0}(F_k)=0$ for large $k$. This contradicts \eqref{eq:mu-bar-estimate}, which guarantees that $\mu_{\lambda,0}^2(F_k)>0$ for every $k$ large enough.
\end{proof}

\subsection{Strong quantitative inequalities with barycentric asymmetry in capillarity problems}\label{sec:BarycentricCapillarity}

As pointed out in the introduction, the proof of \cref{thm:capillarityStrongQuantitative} provides, in fact, the following more explicit and stronger inequality.

\begin{corollary}[Barycentric strong quantitative inequality in capillarity problems 1]\label{cor:tilde-bar-quantitative}
    There exists a constant $C_{\ref{cor:tilde-bar-quantitative}}>0$ depending only on $n$ and $\lambda$ such that, for every $E\subset H$ of finite perimeter with $|E|=|B^{\lambda}|$, and for any point $x^*\in\partial H$ such that $\widetilde b(E-x^*)=0$ there holds
    \[
        \int_{\partial^*E\cap H^+} \left|\nu_E(x)-\frac{x-(x^*,-\lambda)}{|x-(x^*,-\lambda)|}\right|^2\de\hausdorff^{n-1}_x\leq C_{\ref{cor:tilde-bar-quantitative}} D_{\lambda}(E).
    \]
\end{corollary}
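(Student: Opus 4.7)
The plan is to observe that the proof of \cref{thm:capillarityStrongQuantitative} already establishes the following \emph{stronger} statement: there exists $C>0$, depending only on $n,\lambda$, such that for every $F\subset H$ of finite perimeter with $|F|=|B^\lambda|$ and $\widetilde{b}(F)=0$ one has $\mu_{\lambda,0}^2(F)\le C\,D_\lambda(F)$. Granted this, the corollary follows by a translation argument. Indeed, setting $E':=E-x^*$, the translation invariance of $P_\lambda$ along $\partial H$ (evident from \cref{rem:PlambdaConDivergenza}) gives $D_\lambda(E')=D_\lambda(E)$, while $\widetilde{b}(E')=0$ by the choice of $x^*$. A change of variable $y=x-x^*$ transforms the integrand in the statement into $\nu_{E'}(y)-\tfrac{y+\lambda e_n}{|y+\lambda e_n|}$, so that the left-hand side equals $2\mu_{\lambda,0}^2(E')$. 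The inequality of the corollary is then $\mu_{\lambda,0}^2(E')\le C\,D_\lambda(E')$ with $\widetilde{b}(E')=0$, which is exactly the stronger statement above.

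To prove the stronger statement, I would rerun the contradiction argument of \cref{thm:capillarityStrongQuantitative} with $\mu_{\lambda,0}^2$ in place of $\mu_{\lambda}^2$ throughout. Suppose there were $E_k\subset H$ with $|E_k|=|B^\lambda|$, $\widetilde{b}(E_k)=0$, and $k^2 D_\lambda(E_k)<\mu_{\lambda,0}^2(E_k)$. In the proof of \cref{thm:capillarityStrongQuantitative} the only use of $\mu_\lambda^2(E_k)$ is the trivial bound $\mu_\lambda^2\le\mu_{\lambda,0}^2$ in \eqref{eq:zzMuZeroEk}; if one starts directly from the hypothesis on $\mu_{\lambda,0}^2(E_k)$, this intermediate step is simply skipped. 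All subsequent steps carry over verbatim: application of \cref{prop:MinimizationProblemSelection} gives regular minimizers $F_k$ of the auxiliary functional $\mathcal{F}_k$ with $F_k\to B^\lambda$ in $C^{1,\beta}$, the energy comparison \eqref{eq:energy-comparison-FE} yields
\[
0<\mu_{\lambda,0}^2(E_k)\le 2\mu_{\lambda,0}^2(F_k),\qquad |\widetilde{b}(F_k)|^2\le\frac{1}{k}\mu_{\lambda,0}^2(F_k), \qquad D_\lambda(F_k)\le \frac{\mu_{\lambda,0}^2(F_k)}{k\,P_\lambda(B^\lambda)},
\]
and the Fuglede-type inequalities \cref{lem:OScillationBoundedByH1Norm} and \cref{prop:FugledeCapillarity} close the argument as in the final display of the proof of \cref{thm:capillarityStrongQuantitative}, forcing $F_k=B^\lambda$ for large $k$ and contradicting $\mu_{\lambda,0}^2(F_k)>0$.

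The key conceptual observation, which also motivated the precise design of $\mathcal{F}_k$, is that the penalization term in \eqref{eq:DefFkINTRO} is $-\tfrac{1}{k}\mu_{\lambda,0}^2(F)$, not the translation invariant $-\tfrac{1}{k}\mu_\lambda^2(F)$. For this reason the selection principle intrinsically controls the \emph{non-translation-invariant} oscillation asymmetry $\mu_{\lambda,0}^2$, and the barycentric penalty $|\widetilde{b}(F)|^2$ is needed precisely to make such a control meaningful. Consequently, there is no genuinely new obstacle: the only point requiring verification is that no step in \cref{sec:proof-capillarity} exploits the infimum structure of $\mu_\lambda^2$, which is straightforward to check line by line.
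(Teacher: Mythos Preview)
Your proposal is correct and follows essentially the same route as the paper: the paper likewise observes that if one assumes a contradicting sequence $U_k$ for the corollary and sets $E_k:=U_k-x^*_k$, then $\widetilde{b}(E_k)=0$ and $k^2 D_\lambda(E_k)<\mu_{\lambda,0}^2(E_k)$, which is precisely the contradicting sequence already handled in the proof of \cref{thm:capillarityStrongQuantitative}. Your reformulation via the ``stronger statement'' $\mu_{\lambda,0}^2(F)\le C\,D_\lambda(F)$ for sets with $\widetilde{b}(F)=0$ is just an equivalent packaging of the same observation.
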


\cref{cor:tilde-bar-quantitative} follows by arguing by contradiction exactly as in the proof of \cref{thm:capillarityStrongQuantitative}. Indeed, one starts by assuming by contradiction that there is a sequence of sets $U_k\subset H$ with $|U_k|=|B^{\lambda}|$, such that $k^2 D_{\lambda}(U_k)<\int_{\partial^*U_k\cap H^+} \left|\nu_{U_k}(x)-\frac{x-(x^*_k,-\lambda)}{|x-(x^*_k,-\lambda)|}\right|^2$ for some $x^*_k$ such that $\widetilde{b}(U_k- x^*_k)=0$. Taking the translated sets $E_k:=U_k- x^*_k$, then $k^2 D_\lambda(E_k)=k^2 D_{\lambda}(U_k) <\mu^2_{\lambda,0}(E_k)$, and $E_k$ is the starting contradicting sequence in the proof of \cref{thm:capillarityStrongQuantitative}.\\
However, $x^*$ in \cref{cor:tilde-bar-quantitative} might differ from $\frac{1}{|E|}\widetilde b(E)$ since the function $\psi$ in the definition of $\widetilde b$ is not linear.

Notice that we can take $x^*=\barycenter_H E$ in \cref{cor:tilde-bar-quantitative} for every set $E$ with $\diam_H E\leq 100$ thanks to~\eqref{eq:AlternativeBarycenter}, where $\diam_H E$ is the diameter of the orthogonal projection of $E$ onto $\partial H$. More generally, given $d>\diam_H B^{\lambda}$ an upper bound of the diameter of the considered sets, taking the function $\psi_d \coloneqq d\psi(\cdot/d)$ in place of $\psi$, and running the same proof presented above, one obtains the following barycentric quantitative inequality for the oscillation asymmetry.

\begin{corollary}[Barycentric strong quantitative inequality in capillarity problems 2]\label{cor:bar-quantitative}
    Let $d>\diam_H B^{\lambda}$ be given. Then there exists a constant $C_{\ref{cor:bar-quantitative}}>0$ depending only on $n$, $\lambda$ and $d$ such that, for every $E\subset H$ of finite perimeter with $|E|=|B^{\lambda}|$ and $\diam_H E\leq d$ there holds
    \begin{equation}\label{eq:bar-quantitative}
        \int_{\partial^*E\cap H^+} \left|\nu_E(x)-\frac{x-(\barycenter_H E,-\lambda)}{|x-(\barycenter_H E,-\lambda)|}\right|^2\de\hausdorff^{n-1}_x\leq C_{\ref{cor:bar-quantitative}} D_{\lambda}(E).
    \end{equation}
\end{corollary}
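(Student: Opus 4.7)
The plan is to repeat the proofs of \cref{thm:capillarityStrongQuantitative} and \cref{cor:tilde-bar-quantitative} verbatim, but with the truncation function $\psi$ replaced by $\psi_d(t) := d\,\psi(t/d) = \max\{-100d,\min\{t,100d\}\}$, and the nonlinear barycenter $\widetilde{b}$ replaced by
\begin{equation*}
\widetilde{b}_d(F) := \int_F \bigl(\psi_d(x_1),\ldots,\psi_d(x_{n-1})\bigr)\de x.
\end{equation*}
The decisive feature of this rescaling is that $\widetilde{b}_d(F) = |F|\,\barycenter_H F$ whenever the projection of $F$ onto $\partial H$ is contained in $[-100d,100d]^{n-1}$; in particular this holds for any $F\subset H$ with $\diam_H F\le d$ and $\barycenter_H F = 0$.

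First, I would establish the analog of \cref{prop:MinimizationProblemSelection} for the modified auxiliary functional
\begin{equation*}
\mathcal{F}_k^d(F) := P_\lambda(F) + \Lambda\bigl||F|-|B^\lambda|\bigr| - \tfrac{1}{k}\mu_{\lambda,0}^2(F) + |\widetilde{b}_d(F)|^2,
\end{equation*}
producing, for suitable $\Lambda = \Lambda(n,\lambda,d) > 2n$ and all $k$ large, open minimizers $F_k\subset B_{R(n,\lambda,d)}$ with $|F_k|=|B^\lambda|$, whose boundaries are $C^{1,1/2}$-graphs over $\S^{n-1}_+$ converging to $\overline{\partial B^\lambda\cap H^+}$ in $C^{1,\beta}$. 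The argument carries through verbatim: $\psi_d$ is $1$-Lipschitz and bounded by $100d$, so the estimate \eqref{eq:LipschitzianitaDiBtilde} for $\widetilde{b}_d$ holds with a $d$-dependent constant; Ekeland's principle produces a sequence of quasi-minimizers $E_j^k$ with uniform density estimates; the bound on the translation $x_i$ realizing the Fraenkel asymmetry in the key claim \eqref{eq:ClaimKEY} still follows from \cref{thm:PPquantitativaFraenkelCapillare} together with the inequality $|\widetilde{b}_d(x+B^\lambda)| \ge c_{n,\lambda,d}\min\{d,|x|\}$; and almost-minimality for $P_{\Phi_k}$ combined with \cref{thm:almost-regularity} gives the $C^{1,1/2}$-regularity and $C^{1,\beta}$-convergence. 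The volume-fixing argument at the end of the proof equally goes through, provided $\Lambda$ is taken sufficiently large in terms of $d$.

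Next, I would derive the exact analog of \cref{cor:tilde-bar-quantitative} for $\widetilde{b}_d$: for any $E\subset H$ with $|E|=|B^\lambda|$ and any $x^*\in\partial H$ satisfying $\widetilde{b}_d(E-x^*)=0$,
\begin{equation*}
\int_{\partial^*E\cap H^+}\left|\nu_E(x) - \frac{x-(x^*,-\lambda)}{|x-(x^*,-\lambda)|}\right|^2 \de \hausdorff^{n-1} \le C_{n,\lambda,d}\, D_\lambda(E),
\end{equation*}
by copying the contradiction argument of \cref{thm:capillarityStrongQuantitative}: a sequence $U_k$ violating the inequality with constant $k^2$ is translated by $-x^*_k$ to give $E_k$ with $\widetilde{b}_d(E_k)=0$ and $k^2 D_\lambda(E_k) < \mu_{\lambda,0}^2(E_k)$, then tested against the minimizers $F_k$ of $\mathcal{F}_k^d$, and the chain of estimates culminating in \cref{lem:OScillationBoundedByH1Norm} and \cref{prop:FugledeCapillarity} yields the contradiction. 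Given now $E\subset H$ with $|E|=|B^\lambda|$ and $\diam_H E\le d$, I would take $x^* := \barycenter_H E$: the set $E - x^*$ has vanishing $\partial H$-barycenter and $\diam_H(E-x^*)\le d$, so its projection onto $\partial H$ sits inside $B_d\cap\partial H \subset [-100d,100d]^{n-1}$, whence $\widetilde{b}_d(E-x^*) = |E|\,\barycenter_H(E-x^*) = 0$. Applying the previous inequality at $x^* = \barycenter_H E$ gives \eqref{eq:bar-quantitative}.

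The main technical obstacle is the bookkeeping in the revised \cref{prop:MinimizationProblemSelection}: no new geometric ingredient is needed, but one must systematically track how the scaling of $\psi_d$ enters the Ekeland quasi-minimality constants, the compactness claim \eqref{eq:ClaimKEY}, and the volume-correction step, to ensure that $\Lambda = \Lambda(n,\lambda,d)$ can still be chosen large enough so that the minimizers $F_k$ satisfy $|F_k|=|B^\lambda|$. Once this is verified, the reduction from the $\widetilde{b}_d$-centered inequality to \eqref{eq:bar-quantitative} via the identification $x^* = \barycenter_H E$ is immediate.
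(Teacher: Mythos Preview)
Your proposal is correct and follows exactly the approach indicated in the paper: replace $\psi$ by $\psi_d := d\,\psi(\cdot/d)$, rerun the selection-principle argument of \cref{prop:MinimizationProblemSelection} and \cref{thm:capillarityStrongQuantitative} with $\widetilde{b}_d$ in place of $\widetilde{b}$, and then observe that for $E$ with $\diam_H E\le d$ the choice $x^*=\barycenter_H E$ satisfies $\widetilde{b}_d(E-x^*)=0$. Your additional bookkeeping on the $d$-dependence of the constants is accurate and more detailed than the paper's one-line justification.
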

Our approach does not provide an explicit control of the constant $C_{\ref{cor:bar-quantitative}}$ in terms of the diameter of the sets. However, we remark that the dependence of the constant $C_{\ref{cor:bar-quantitative}}$ on the diameter is necessary in dimension $n\geq3$. Indeed, as analogously pointed out in \cite{GambicchiaPratelli2025}, one can take a sequence of sets $E_j = B^\lambda(|B^\lambda|-\eps_j, x_j) \cup B^\lambda(\eps_j, y_j)$, with $\eps_j\to0$ and suitable mutually diverging points $x_j, y_j \in \partial H$ so that $\barycenter_H E_j =0$, $D_\lambda(E_j) \to 0$, but the left hand side of \eqref{eq:bar-quantitative} with $E=E_j$ tends to a positive constant as $j\to\infty$. 

\medskip

Additionally, we remark that our method can be applied to obtain the quantitative isoperimetric inequality with barycentric Fraenkel asymmetry as follows.

\begin{corollary}[Barycentric quantitative inequality in capillarity problems]\label{cor:barycentric-Fraenkel}
    There exists a constant $C_{\ref{cor:barycentric-Fraenkel}}>0$ depending only on $n$ and $\lambda$ such that, for every $E\subset H$ of finite perimeter with $|E|=|B^{\lambda}|$, and for any point $x^*\in\partial H$ such that $\widetilde b(E-x^*)=0$, we have the following inequality:
    \[
        |E\symmdiff (B^{\lambda}+x^*)|^2\leq C_{\ref{cor:barycentric-Fraenkel}} D_{\lambda}(E).
    \]
    If $\diam_H E\le d <+\infty$, there exists a constant $C_{\ref{cor:barycentric-Fraenkel}}'>0$ depending only on $n$, $\lambda$ and $d$ such that
    \[
        |E\symmdiff (B^{\lambda}+\barycenter_H E)|^2\leq C_{\ref{cor:barycentric-Fraenkel}}' D_{\lambda}(E).
    \]
\end{corollary}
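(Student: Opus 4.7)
The plan is to mimic the contradiction argument of \cref{thm:capillarityStrongQuantitative} and \cref{cor:tilde-bar-quantitative}, with the oscillation asymmetry $\mu_{\lambda,0}^2$ appearing in the auxiliary functional replaced by the volumetric asymmetry $|F\symmdiff B^\lambda|^2$. The second, diameter-bounded inequality is reduced to the first by the same device used for \cref{cor:bar-quantitative}: substituting $\psi_d(t):=d\psi(t/d)$ for $\psi$ in the definition of $\widetilde{b}$ forces $x^*=\barycenter_H E$ whenever $\diam_H E\le d$.

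Supposing the first inequality fails, there exist $U_k\subset H$ with $|U_k|=|B^\lambda|$, points $x_k^*\in\partial H$ with $\widetilde{b}(U_k-x_k^*)=0$, and $|U_k\symmdiff(B^\lambda+x_k^*)|^2 > k^2 D_\lambda(U_k)$. Setting $E_k:=U_k-x_k^*$ and using translation invariance of $P_\lambda$ and $D_\lambda$ along $\partial H$, we may assume $\widetilde{b}(E_k)=0$, $|E_k|=|B^\lambda|$, $|E_k\symmdiff B^\lambda|^2>k^2D_\lambda(E_k)$, and, since $|E_k\symmdiff B^\lambda|\le 2|B^\lambda|$, $D_\lambda(E_k)\to 0$. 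Introduce the auxiliary functional
\begin{equation*}
\mathcal{H}_k(F):=P_\lambda(F)+\Lambda\bigl||F|-|B^\lambda|\bigr|-\tfrac{1}{k}|F\symmdiff B^\lambda|^2+|\widetilde{b}(F)|^2,
\end{equation*}
with $\Lambda$ large as in \cref{prop:MinimizationProblemSelection}. The substituted term is $L^1(H)$-continuous and volumetrically Lipschitz ($\bigl||F\symmdiff B^\lambda|^2-|G\symmdiff B^\lambda|^2\bigr|\le C|F\symmdiff G|$), so the existence/regularity scheme of \cref{prop:MinimizationProblemSelection}---Ekeland's principle, quasi-minimality and density estimates, confinement via \cref{thm:PPquantitativaFraenkelCapillare}, and almost-minimality against $P_\lambda$ plus volume perturbations---transfers verbatim (in fact, being $L^1$-continuous rather than merely lower semicontinuous, the new term is strictly easier to handle than $\mu_{\lambda,0}^2$). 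This yields, for every $k$ large, open minimizers $F_k$ contained in a fixed ball, with $|F_k|=|B^\lambda|$ and with $\overline{\partial F_k\cap H^+}$ a $C^{1,1/2}$-graph $w_\lambda+u_k$ over $\S^{n-1}_+$ with $u_k\to 0$ in $C^{1,\beta}$ for any $\beta\in(0,1/2)$; in particular $F_k\subset B_{100}$ eventually.

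Testing $\mathcal{H}_k(F_k)\le\mathcal{H}_k(E_k)$ and combining with the contradiction hypothesis $|E_k\symmdiff B^\lambda|^2>k^2D_\lambda(E_k)$, exactly as in \eqref{eq:energy-comparison-FE}--\eqref{eq:zwq}, extracts the bounds $D_\lambda(F_k)\le\tfrac{C}{k}|F_k\symmdiff B^\lambda|^2$ and $|\widetilde{b}(F_k)|^2\le\tfrac{C}{k}|F_k\symmdiff B^\lambda|^2$. Since $F_k\subset B_{100}$, the latter reads $|\barycenter_H F_k|^2\le\tfrac{C}{k}|F_k\symmdiff B^\lambda|^2$. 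The direct volumetric bound $|E\symmdiff B^\lambda|\le C\bigl(D_\lambda(E)^{1/2}+|\barycenter_H E|\bigr)$ obtained inside the proof of \cref{prop:FugledeCapillarity} (via \cref{thm:PPquantitativaFraenkelCapillare} and the triangle inequality), applied to $F_k$ which is graphical with small $C^1$-norm, then gives
\begin{equation*}
|F_k\symmdiff B^\lambda|^2\le C\bigl(D_\lambda(F_k)+|\barycenter_H F_k|^2\bigr)\le \tfrac{C}{k}|F_k\symmdiff B^\lambda|^2,
\end{equation*}
so for $k$ large $|F_k\symmdiff B^\lambda|=0$, i.e., $F_k=B^\lambda$. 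Plugging this into the chain $P_\lambda(B^\lambda)=\mathcal{H}_k(F_k)\le\mathcal{H}_k(E_k)\le P_\lambda(B^\lambda)-\tfrac{1}{2k}|E_k\symmdiff B^\lambda|^2$ forces $|E_k\symmdiff B^\lambda|=0$, hence $D_\lambda(E_k)=0$, contradicting the strict inequality $|E_k\symmdiff B^\lambda|^2>k^2D_\lambda(E_k)$.

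The main obstacle is certifying that the selection-principle machinery of \cref{prop:MinimizationProblemSelection} transfers faithfully to $\mathcal{H}_k$; this reduces to rechecking lower semicontinuity, the quasi-minimality/density estimates, confinement, and the almost-minimality against the elliptic anisotropic perimeter $P_\lambda$, all of which are either identical or strictly simpler than the original arguments thanks to the $L^1$-continuity of $F\mapsto|F\symmdiff B^\lambda|^2$.
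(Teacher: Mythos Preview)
Your proposal is correct and follows essentially the same route as the paper: replace $\mu_{\lambda,0}^2$ in the auxiliary functional by the squared Fraenkel-type term $|F\symmdiff B^\lambda|^2$, observe that this is a pure volume term (hence $L^1$-continuous and harmless for quasi/almost-minimality), and rerun the selection principle of \cref{prop:MinimizationProblemSelection} and \cref{thm:capillarityStrongQuantitative}. The only cosmetic difference is that the paper phrases the Fuglede replacement as $|E\symmdiff B^\lambda|\lesssim\|u\|_{L^1}\lesssim\|u\|_{H^1}$ followed by \cref{prop:FugledeCapillarity}, whereas you shortcut directly to the intermediate estimate $|E\symmdiff B^\lambda|\le C\bigl(D_\lambda(E)^{1/2}+|\barycenter_H E|\bigr)$ already obtained inside the proof of \cref{prop:FugledeCapillarity}; this yields the same closing inequality.
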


\begin{proof}
    We provide some details about the necessary adaptations to obtain the first inequality since here we use a different notion of asymmetry. 
    For a set $E\subset H$ with $\overline{\partial E\cap H^+} = \{(w_{\lambda}(x)+u(x))x: x\in\S^{n-1}_+\}$, and $\|u\|_{C^1}\ll1$, as in the proof of \cref{prop:FugledeCapillarity} one estimates $|E\symmdiff B^{\lambda}|\lesssim \|u\|_{L^1}\lesssim \|u\|_{L^2}\leq \|u\|_{H^1}$. This replaces \cref{lem:OScillationBoundedByH1Norm}. Then, we can run the selection principle in \cref{prop:MinimizationProblemSelection} and \cref{thm:capillarityStrongQuantitative} replacing $\mu_{\lambda,0}(F)$ with
    \[
        A_{\lambda,0}(F) := |F\symmdiff B^{\lambda}|.
    \]
    Since the Fraenkel asymmetry corresponds to a ``volume term'',  the application of the regularity theory is just simpler, and the proof follows.
\end{proof}

\subsection{Sharpness of \cref{thm:capillarityStrongQuantitative}.}\label{subsec:sharp}

We prove that exponents in the inequality in \cref{thm:capillarityStrongQuantitative} are sharp. 
Let $\varphi:\S^{n-1}_+\to \R$ be a smooth function such  that $\varphi\not\equiv0$,
\begin{equation}\label{eq:ConditionSharpnessVolume}
    \int_{\S^{n-1}_+} w_\lambda^{n-1} \varphi =0,
\end{equation}
and $\varphi$ is ``rotationally symmetric'', in the sense that we impose $\varphi(x) = f(\Braket{x,e_n})$ for some $f \in C^\infty_c(0,1)$.
Consider the one-parameter family of sets $E_t:= (|B^\lambda|/|\widetilde{E}_t| )^{\frac1n} \widetilde{E}_t$ where $\overline{\partial \widetilde{E}_t \cap H^+} := \{(w_\lambda + t \varphi) x \st x \in \S^{n-1}_+\}$. We also write $\overline{\partial E_t \cap H^+}=: \{(w_\lambda + h_t) x \st x \in \S^{n-1}_+\}$ for a smooth one-parameter family of smooth functions $h_t:\S^{n-1}_+\to \R$. By construction, we have $(|B^\lambda|/|\widetilde{E}_t| )^{\frac1n} (w_\lambda + t \varphi) x = (w_\lambda + h_t) x $. Differentiating the previous relation with respect to $t$ and evaluating at $t=0$, using \eqref{eq:ConditionSharpnessVolume}, one sees that $\partial_t h_t \big|_{t=0} = \varphi$. Hence $h_t$ has the expansion
\[
h_t
= t \,\varphi + \frac12 t^2 \Phi +O(t^3)
\quad
\text{as $t\to0$,}
\]
for some smooth function $ \Phi :\S^{n-1}_+ \to \R$.

Summing up, we constructed a one-parameter family of sets $E_t$, for $t\in(-\eps,\eps)$ for some $\eps>0$ small, such that


\begin{enumerate}
    \item[a)] $|E_t|=|B^\lambda|$ for any $t \in (-\eps,\eps)$,
    \item[b)] $E_t$ is rotationally symmetric with respect to the $x_n$-axis for any $t \in (-\eps,\eps)$,
    \item[c)] $E_t\neq B^\lambda$ for any for any $t \in (-\eps,\eps)\setminus\{0\}$, and $E_0=B^\lambda$,
    \item[d)] $\overline{\partial E_t \cap H^+}= \{(w_\lambda + h_t) x \st x \in \S^{n-1}_+\}$.
\end{enumerate}

We want to compute the expansion of the product $\Braket{\nu_{E_t} , \nu_{B^\lambda}}$ as $t\to0$. Clearly $\Braket{\nu_{E_t} |_{t=0}, \nu_{B^\lambda}}=1$ and $0 = \partial_t |\nu_{E_t}|^2  |_{t=0} = 2 \Braket{\partial_t  \nu_{E_t} |_{t=0}, \nu_{B^\lambda}}=0$. Moreover, a direct computation using the formula for the unit normal in \cref{lemma:formulas} shows that
\begin{equation}\label{eq:Sharp1}
\begin{split}
\Braket{\partial_t^2 \nu_{E_t} \Big|_{t=0}, \nu_{B^\lambda}}
    &= 
    \frac{1}{(w_\lambda^2 + |\nabla w_\lambda|^2)^2} \left(
    (\varphi w_\lambda + \Braket{\nabla \varphi, \nabla w_\lambda})^2 - (\varphi^2 + |\nabla \varphi|^2) (w_\lambda^2 + |\nabla w_\lambda|^2)
    \right).
\end{split}
\end{equation}
Therefore, denoting by $C$ a positive constant independent of $t$ that may change from line to line, for $t$ sufficiently small the first two lines in \eqref{eq:zzz0} imply
\begin{equation}\label{eq:Sharp2}
\begin{split}
    \mu_{\lambda,0}^2(E_t) &\ge C \int_{\S^{n-1}_+} 1- \Braket{\nu_{E_t}((w_\lambda + h_t) x) , \nu_{B^\lambda}(w_\lambda  x)} \\
    &\overset{\eqref{eq:Sharp1}}{\ge} t^2 \, C
    \int_{\S^{n-1}_+} \left(
     (\varphi^2 + |\nabla \varphi|^2) (w_\lambda^2 + |\nabla w_\lambda|^2)
     - (\varphi w_\lambda + \Braket{\nabla \varphi, \nabla w_\lambda})^2
    \right) \\
    &\ge C \, t^2,
\end{split}
\end{equation}
where in the last inequality we used that $\varphi\not\equiv0$ and we used Cauchy--Schwarz inequality, observing that $\varphi$ is not a multiple of $w_\lambda$ because of \eqref{eq:ConditionSharpnessVolume}. 

On the other hand, going back to the proof of \cref{prop:EspansionePlambda}, the expansion in \eqref{eq:zzExpansion1} implies
\begin{equation}\label{eq:Sharp3}
\begin{split}
D_\lambda(E_t) &\le C \left( t^2 + \int_{\S^{n-1}_+} (n-1) w_\lambda^{n-1} h_t \right) 
\\
&
\overset{\eqref{eq:ConditionSharpnessVolume}}{\le} C \left( t^2  + \int_{\S^{n-1}_+} (n-1) w_\lambda^{n-1} \left(\frac12 t^2 \Phi + O(t^3) \right)\right) \le C \, t^2,
\end{split}
\end{equation}
for any $t$ sufficiently small.

Finally we claim that, being $E_t$ rotationally symmetric, implies that
\begin{equation}\label{eq:Sharp4}
\mu_{\lambda,0}^2(E_t) \le 20 \mu_\lambda^2(E_t).
\end{equation}
Indeed, fix $t$ and suppose $x'\in\partial H \neq\{0\}$ is such that $ \tfrac12 \int_{\partial^* E_t \cap H^+} \Big| \nu_{E_t} - \tfrac{x-(x',-\lambda)}{|x-(x',-\lambda)|} \Big|^2 \le 2  \mu^2_\lambda(E_t)$. 
By symmetry, the same holds with $x'$ replaced by $-x'$. 
We shall now employ the geometric inequality
\begin{equation}\label{eq:ElementarySharpness}
\bigg| 
\frac{x-(0,-\lambda)}{|x-(0,-\lambda)|} - \frac{x-(x',-\lambda)}{|x-(x',-\lambda)|} \bigg|^2 
\le
\bigg| 
\frac{x-(-x',-\lambda)}{|x-(-x',-\lambda)|} - \frac{x-(x',-\lambda)}{|x-(x',-\lambda)|} \bigg|^2 ,
\end{equation}
for any $x \in H^+$, that will be justified below. Hence we conclude that
\begin{equation}
\begin{split}
    \mu_{\lambda,0}^2(E_t)
    &= \frac12  \int_{\partial^* E_t \cap H^+} \bigg| \nu_{E_t} - \frac{x-(0,-\lambda)}{|x-(0,-\lambda)|} \bigg|^2 
    \\&
    \le  \int_{\partial^* E_t \cap H^+} \bigg| \nu_{E_t} - \frac{x-(x',-\lambda)}{|x-(x',-\lambda)|} \bigg|^2 
    +
    \bigg| 
    \frac{x-(0,-\lambda)}{|x-(0,-\lambda)|} - \frac{x-(x',-\lambda)}{|x-(x',-\lambda)|} \bigg|^2 
    \\&
    \overset{\eqref{eq:ElementarySharpness}}{\le}
     \int_{\partial^* E_t \cap H^+} \bigg| \nu_{E_t} - \frac{x-(x',-\lambda)}{|x-(x',-\lambda)|} \bigg|^2 
    + \bigg| 
    \frac{x-(-x',-\lambda)}{|x-(-x',-\lambda)|} - \frac{x-(x',-\lambda)}{|x-(x',-\lambda)|} \bigg|^2 
    \\&
    \le 
    \int_{\partial^* E_t \cap H^+} 3 \bigg| \nu_{E_t} - \frac{x-(x',-\lambda)}{|x-(x',-\lambda)|} \bigg|^2 
    + 
    2\bigg| \nu_{E_t} - \frac{x-(-x',-\lambda)}{|x-(-x',-\lambda)|} \bigg|^2 
    \\&
    \le 20 \mu^2_\lambda(E_t),
\end{split}
\end{equation}
proving \eqref{eq:Sharp4}. Putting together \eqref{eq:Sharp2}, \eqref{eq:Sharp3} and \eqref{eq:Sharp4} we conclude that $D_\lambda(E_t) \le C \mu^2_\lambda(E_t)$ for any $t$ small enough, proving the claimed sharpness.

It remains to justify \eqref{eq:ElementarySharpness}. Expanding the squares in \eqref{eq:ElementarySharpness} and changing variables into $z:= x-(x',-\lambda)$, we see that \eqref{eq:ElementarySharpness} is equivalent to
\begin{equation}\label{eq:ElementarySharpness2}
\bigg\langle \frac{z+(x',0)}{|z+(x',0)|} , z \bigg\rangle
\ge
\bigg\langle \frac{z+2(x',0)}{|z+2(x',0)|} , z \bigg\rangle,
\end{equation}
for any $z \in \R^n$. It is sufficient to suppose that $z,(x',0)$ are linearly independent. Hence, considering the $2$-dimensional subspace generated by $z,(x',0)$, in order to prove \eqref{eq:ElementarySharpness2} it is equivalent to prove that if $v,w \in \R^2$ are independent, then
\begin{equation}\label{eq:ElementarySharpness3}
\bigg\langle \frac{v+w}{|v+w|} , v \bigg\rangle
\ge
\bigg\langle \frac{v+2w}{|v+2w|} , v \bigg\rangle.
\end{equation}
Indeed, $\big\langle \tfrac{v+w}{|v+w|} , v \big\rangle = |v| \cos \theta_1$ where $\theta_1 \in (0,\pi)$ is the angle between $v$ and $\tfrac{v+w}{|v+w|}$, and the same can be said for the second scalar product for a corresponding angle $\theta_2\in (0,\pi)$. Clearly $\theta_1\le\theta_2$, hence $\cos \theta_1 \ge \cos \theta_2$, and \eqref{eq:ElementarySharpness3} follows.

\section{Strong quantitative isoperimetric inequalities in convex cones}\label{sec:cones}

\noindent\textbf{List of new symbols related to this section.}

\begin{itemize}

\item A closed convex cone $\mathcal{C}\subset \R^n$ with non-empty interior will be always written as $\mathcal{C}= \R^m \times \widetilde{\mathcal{C}}$, where $\widetilde{\mathcal{C}}$ does not contain lines. We will always assume that $0$ is a tip of $\mathcal{C}$.

\item $\interior{\mathcal{C}}$ denotes the interior of a cone $\mathcal{C}$.

\item $P_\mathcal{C}(E):= \hausdorff^{n-1}(\partial^*E \setminus \partial \mathcal{C}) = P(E,\interior{\mathcal{C}})$ denotes the relative perimeter in a cone $\mathcal{C}$.

\item $K_{\mathcal{C}}:= B_1(0) \cap \mathcal{C}$.

\item $\alpha_{\mathcal{C}}(E)$, $D_{\mathcal{C}}(E)$, $\mu_{\mathcal{C}}(E)$, $\mu_{\mathcal{C},0}(E)$ denote Fraenkel asymmetry, deficit, and oscillation asymmetries of a set $E$ in a cone $\mathcal{C}$, defined in \cref{def:AsymmetriesDeficitInCones}.

\item $\Sigma_{\mathcal{C}} := \mathcal{C} \cap \S^{n-1}$ denotes the cross section of a cone $\mathcal{C}$.

\item $\barycenter_{\R^m} E$ denotes the barycenter of a set $E$ with respect to the factor $\R^m$ in a cone $\mathcal{C}= \R^m \times \widetilde{\mathcal{C}}$, defined in \eqref{eq:DefBaricentroConi}. 

\item $ \widetilde b_{\R^m}(F) = \int_F (\psi(x_1),\ldots,\psi(x_m))\de x$, for any measurable set $F$ in a cone $\mathcal{C}=\R^m\times \widetilde{\mathcal{C}}$ with $|F|<+\infty$, defined in \eqref{eq:DefBaricentroTILDEconi}.

\item For $m \in\{0,\ldots, n-1\}$ fixed, spaces $\R^m, \R^{n-m}$ are identified with subspaces of $\R^n$ by $\R^n= \R^m \times \R^{n-m}$. Vectors $x' \in \R^m$, $\widetilde{x} \in \R^{n-m}$ are thus identified with $(x',0)\in \R^n$, $(0,\widetilde{x})\in \R^n$, respectively.

\end{itemize}


We aim to obtain a sharp quantitative isoperimetric inequality in convex cones in the strong form, similarly to what we did in the previous section. We first collect the notions of asymmetry and deficit that we are going to use.

\begin{definition}\label{def:AsymmetriesDeficitInCones}
    Let $\mathcal{C}\subset \R^n$ be a closed convex cone with non-empty interior $\interior{\mathcal{C}}$. Write $\mathcal{C}=\R^m\times \widetilde{\mathcal{C}}$ where $\widetilde{\mathcal{C}}$ does not contain lines. Let $K_{\mathcal{C}}:= B_1(0) \cap \mathcal{C}$ and let $E \subset \mathcal{C}$ be a measurable set with $|E|\in(0,+\infty)$. For $s>0$ such that $|E|=|sK_{\mathcal{C}}|$, we define the Fraenkel asymmetry
    \[
    \alpha_{\mathcal{C}}(E) := \inf \left\{
    \frac{|E \Delta ((x',0)+sK_{\mathcal{C}})|}{|E|} \st x' \in \R^m,
    \right\}
    \]
    and the isoperimetric deficit
    \[
    D_{\mathcal{C}}(E) := \frac{P_{\mathcal{C}}(E) - P_{\mathcal{C}}(sK_{\mathcal{C}})}{P_{\mathcal{C}}(sK_{\mathcal{C}})} = \frac{P_{\mathcal{C}}(E)}{n|K_{\mathcal{C}}|^{\frac1n} |E|^{\frac{n-1}{n}}} -1,
    \]
    where $P_{\mathcal{C}}(E):= P(E,\interior{\mathcal{C}})= \hausdorff^{n-1}(\partial^*E \setminus\partial \mathcal{C})$ denotes the relative perimeter in $\mathcal{C}$.

    The strong isoperimetric inequality in convex cones will bound from above the following the natural notion of oscillation asymmetry
    \[
        \mu_{\mathcal{C}}^2(E) := \inf\left\{
        \frac{1}{2 s_E^{n-1}} \int_{\partial^* E \cap\interior{\mathcal{C}}} \left| \nu_E(x) - \frac{x-(x',0)}{|x-(x',0)|} \right|^2 \de \hausdorff^{n-1} (x) \st x' \in \R^m \right\},
    \]
    where $\nu_E$ as usual denotes the generalized outer unit normal to $E$, and $s_E=(|E|/|K_{\mathcal{C}}|)^{1/n}$.
    Similarly to what we did in \cref{sec:capillarity}, for technical reasons it will be convenient to introduce an asymmetry index that is not translation, nor it is scaling invariant:
    \[
        \mu_{\mathcal{C},0}^2(E) := \frac{1}{2} \int_{\partial^* E \cap\interior{\mathcal{C}}} \left| \nu_E(x) - \frac{x}{|x|} \right|^2 \de \hausdorff^{n-1} (x).
    \]
\end{definition}

\begin{lemma}\label{lemma:mu-C-alternative}
    Let $\mathcal{C}\subset \R^n$ be a closed cone with non-empty interior. Then, for any measurable set $E\subset \mathcal{C}$ with finite volume there holds
    \[
        \mu_{\mathcal{C},0}^2(E) = P_{\mathcal{C}}(E)-\int_E\frac{n-1}{|x|}\de x.
    \]
\end{lemma}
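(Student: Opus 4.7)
The plan is to expand the square in the definition of $\mu_{\mathcal{C},0}^2(E)$ and apply the divergence theorem to the radial vector field $X(x):=x/|x|$, which satisfies $\divergence X = (n-1)/|x|$ pointwise away from the origin. This mirrors the strategy already used in \cref{lem:RiscritturaDiMuLambdaZero} for the capillarity analogue.

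First, using $|\nu_E(x) - x/|x||^2 = 2 - 2\Braket{\nu_E(x), x/|x|}$, I would rewrite
\[
\mu_{\mathcal{C},0}^2(E) = \int_{\partial^* E \cap \interior{\mathcal{C}}} \Big( 1 - \Braket{\nu_E(x), \tfrac{x}{|x|}} \Big) \de \hausdorff^{n-1}(x) = P_{\mathcal{C}}(E) - \int_{\partial^* E \cap \interior{\mathcal{C}}} \Braket{\nu_E, \tfrac{x}{|x|}} \de \hausdorff^{n-1},
\]
so the claim reduces to proving
\[
\int_{\partial^* E \cap \interior{\mathcal{C}}} \Braket{\nu_E(x), \tfrac{x}{|x|}} \de \hausdorff^{n-1}(x) = \int_E \frac{n-1}{|x|} \de x.
\]

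Next, I would apply the divergence theorem to $X$ on $E$. Both sides of the targeted identity are finite: the right-hand side because $1/|x|$ is locally integrable in $\R^n$ for $n\ge 2$ and bounded by $1$ on $\{|x|\ge 1\}$, while $|E|<+\infty$; the left-hand side because $|X|\le 1$ and one may assume $P_{\mathcal{C}}(E)<+\infty$ (otherwise the identity is trivially satisfied with both sides $+\infty$). The singularity of $X$ at the origin is dealt with in the standard way via the truncation $X_\eps(x) := \min(|x|/\eps, 1)\, x/|x|$, which is Lipschitz with $|X_\eps|\le 1$, coincides with $X$ on $\{|x|>\eps\}$, and satisfies $\divergence X_\eps = n/\eps$ on $B_\eps$. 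Passing to the limit $\eps \to 0$ in the divergence identity for $X_\eps$ is routine: dominated convergence controls the boundary term, while $\big|\int_{E\cap B_\eps} n/\eps \de x\big| \le n\omega_n \eps^{n-1} \to 0$ controls the volume term, yielding
\[
\int_E \frac{n-1}{|x|}\de x = \int_{\partial^* E}\Braket{\nu_E(x), \tfrac{x}{|x|}} \de \hausdorff^{n-1}(x).
\]

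Finally, I would split $\partial^* E = (\partial^* E \cap \interior{\mathcal{C}}) \cup (\partial^* E \cap \partial \mathcal{C})$ modulo $\hausdorff^{n-1}$-negligible sets and show that the contribution from $\partial^* E \cap \partial \mathcal{C}$ vanishes. Since $E \subset \mathcal{C}$, at $\hausdorff^{n-1}$-a.e. $x \in \partial^* E \cap \partial \mathcal{C}$ the generalized outer normal $\nu_E(x)$ agrees with the outer unit normal $\nu_{\mathcal{C}}(x)$ to the convex set $\mathcal{C}$; because $\mathcal{C}$ is a cone with tip at $0$, the half-line $\{tx : t>0\}$ is contained in $\partial \mathcal{C}$, so $x$ itself is tangent to $\partial \mathcal{C}$ at $x$, giving $\Braket{\nu_{\mathcal{C}}(x), x}=0$. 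Combining with the previous display produces the desired identity. The only mild technicality is the bookkeeping of the integrable singularity of $X$ at the origin, which is entirely routine; the vanishing of the boundary term on $\partial \mathcal{C}$ is an immediate consequence of the cone structure and causes no real obstacle.
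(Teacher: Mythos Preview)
Your proposal is correct and follows essentially the same approach as the paper's proof: apply the divergence theorem to the radial field $X=x/|x|$ and observe that $\Braket{X,\nu_E}=0$ on $\partial^* E\cap\partial\mathcal{C}$ because $\mathcal{C}$ is a cone. You supply more detail than the paper (the truncation $X_\eps$ to handle the origin and the explicit discussion of the boundary term), but the argument is the same.
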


\begin{proof}
    The proof is a simplified version of \cref{lem:RiscritturaDiMuLambdaZero}. It is sufficient to apply the divergence theorem to the vector field $X=\frac{x}{|x|}$, observing that $\Braket{X ,\nu_E}=0$ on $\partial^*E \cap \partial^* \mathcal{C}$.
\end{proof}

    As mentioned in the introduction, in \cite{FigalliIndrei} the authors proved the following sharp quantitative isoperimetric inequality for the isoperimetric problem in convex cones.    
    \begin{theorem}[{\cite[Theorem 1.2]{FigalliIndrei}}]\label{thm:QuantitativeConesFraenkel}
    Let $\mathcal{C}\subset \R^n$ be a closed convex cone with non-empty interior. Then there exists $C_{\ref{thm:QuantitativeConesFraenkel}}=C_{\ref{thm:QuantitativeConesFraenkel}}(\mathcal{C})>0$ such that the following holds.\\
    Let $E\subset \mathcal{C}$ be a measurable set with $|E| \in (0,+\infty)$. Then
    \begin{equation*}
        \alpha^2_{\mathcal{C}}(E) \le C_{\ref{thm:QuantitativeConesFraenkel}} D_{\mathcal{C}}(E).
    \end{equation*}
    \end{theorem}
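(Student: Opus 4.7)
The plan is to adapt the optimal transport approach of Figalli--Maggi--Pratelli (originally for $\R^n$) to the setting of convex cones, in the spirit of the Figalli--Indrei paper itself. After rescaling, I would assume $|E|=|K_{\mathcal{C}}|$, so that $P_{\mathcal{C}}(K_{\mathcal{C}})=n|K_{\mathcal{C}}|$ and $D_{\mathcal{C}}(E)=\frac{P_{\mathcal{C}}(E)-n|K_{\mathcal{C}}|}{n|K_{\mathcal{C}}|}$, and restrict to the regime where $D_{\mathcal{C}}(E)$ is smaller than a threshold $\delta_0=\delta_0(\mathcal{C})$ (otherwise the bound is trivial). The key object is the Brenier optimal transport map $T=\grad\phi:E\to K_{\mathcal{C}}$ pushing the uniform measure on $E$ forward to that on $K_{\mathcal{C}}$; then $\det DT=1$ a.e.\ and $DT$ is symmetric positive semidefinite, so AM--GM gives $\tfrac{1}{n}\divergence T\ge (\det DT)^{1/n}=1$ pointwise, with a quantitative surplus $\tfrac{1}{n}\divergence T - 1 \ge c_n |DT-\Id|^2$ in the regime where $DT$ is close to $\Id$.

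Integrating and applying the divergence theorem on $E\subset\mathcal{C}$ yields
\[
n|K_{\mathcal{C}}|=n|E|\le \int_E \divergence T\de x = \int_{\partial^* E\cap \interior{\mathcal{C}}} T\cdot \nu_E\de\hausdorff^{n-1} + \int_{\partial^* E\cap \partial \mathcal{C}} T\cdot \nu_E\de\hausdorff^{n-1}.
\]
On the relative boundary, $|T|\le 1$ bounds the first integral by $P_{\mathcal{C}}(E)$. On the wall boundary I would exploit convexity: since $T(x)\in \mathcal{C}$ a.e.\ and the outer normal $\nu_{\mathcal{C}}(y)$ at an a.e.\ point $y\in\partial \mathcal{C}$ defines a supporting half-space $\mathcal{C}\subset\{z:z\cdot \nu_{\mathcal{C}}(y)\le 0\}$, one gets $T\cdot \nu_E=T\cdot \nu_{\mathcal{C}}\le 0$ on $\partial^* E\cap \partial \mathcal{C}$. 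This recovers the Lions--Pacella inequality; the quantitative slack further gives that both the AM--GM defect $\int_E |DT-\Id|^2\de x$ and the trace defect $\int_{\partial^* E\cap \interior{\mathcal{C}}}(1-|T|)\de\hausdorff^{n-1}$ are bounded by $C_n |K_{\mathcal{C}}| D_{\mathcal{C}}(E)$.

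The third step is the Figalli--Maggi--Pratelli-type stability argument: by a Poincaré--trace-type inequality combined with the above controls, there should exist a vector $b\in\R^m$ such that $\|T-\Id-(b,0)\|_{L^1(E)}\le C(\mathcal{C})\, D_{\mathcal{C}}(E)^{1/2}$. The translation vector is constrained to lie in the $\R^m$ factor because these are the only rigid motions that preserve both $\mathcal{C}$ and $K_{\mathcal{C}}$. Since $T$ transports $E$ onto $K_{\mathcal{C}}$, this $L^1$ closeness of $T$ to a translated identity yields
\[
|E\symmdiff ((b,0)+K_{\mathcal{C}})|\le C(\mathcal{C})\, D_{\mathcal{C}}(E)^{1/2},
\]
whence $\alpha_{\mathcal{C}}^2(E)\le C(\mathcal{C}) D_{\mathcal{C}}(E)$, as claimed.

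The hard part will be the trace-type Poincaré inequality that governs the passage from the $L^2$ control of $DT-\Id$ on $E$ to the $L^1$ control of $T-\Id-(b,0)$: it must respect precisely the residual translation symmetry $\R^m$, so that the relevant spectral constant is computed after quotienting by affine functions constant on the $\R^m$-directions. This is where the dependence of the final constant on the cone $\mathcal{C}$ enters, through a spectral-gap-type quantity on the cross section $\Sigma_{\mathcal{C}}=\mathcal{C}\cap \S^{n-1}$ and a lower bound on $|K_{\mathcal{C}}|$. A subtle related point is the non-smoothness of $\partial\mathcal{C}$, which requires the convexity argument bounding the wall boundary term to be run using supporting half-spaces at $\hausdorff^{n-1}$-a.e.\ point rather than pointwise normals.
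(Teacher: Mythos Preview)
The paper does not prove this theorem: it is simply recalled as a known result from \cite{FigalliIndrei}, and the paper only uses it as a black box (keeping track of the constant $C_{\ref{thm:QuantitativeConesFraenkel}}(\mathcal{C})$ in subsequent estimates). Your sketch is a faithful outline of the Figalli--Indrei argument itself, which in turn adapts the Figalli--Maggi--Pratelli optimal transport proof to the cone setting; in particular, the use of the Brenier map, the AM--GM deficit bound on $DT$, the sign of the wall boundary term via convexity of $\mathcal{C}$, and the identification of the residual translation freedom with the $\R^m$ factor are all correct. Since the paper offers no alternative proof, there is nothing further to compare.
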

    It is known from \cite{FigalliIndrei} that $C_{\ref{thm:QuantitativeConesFraenkel}}=C_{\ref{thm:QuantitativeConesFraenkel}}(\mathcal{C})$ is a constant depending on the geometry of $\mathcal{C}$, and a careful reading of \cite{FigalliIndrei} allows to detect such a dependence. For this reason, in the rest of the work we will keep track of the dependence of newly defined constants on $C_{\ref{thm:QuantitativeConesFraenkel}}(\mathcal{C})$, in place of stating generic dependencies on $\mathcal{C}$.

\begin{definition}\label{def:DistanzaC1-cone}
    Let $\mathcal{C}\subset\R^n$ be a closed convex cone with non-empty interior. Let $k \in \N$ with $k\ge 1$, $\beta \in (0,1)$.
    Let $E \subset \mathcal{C}$ be a bounded open set. We say that the boundary of $E$ is a $C^{k,\beta}$-graph over $\Sigma_{\mathcal{C}}$ (or, more precisely, that $\overline{\partial E\cap \interior{\mathcal{C}}}$ is a $C^{k,\beta}$-graph over $\Sigma_{\mathcal{C}}$) if $\partial E$ is Lipschitz and if there exists a $C^{k,\beta}$-function $f:\Sigma_{\mathcal{C}}\to (0,+\infty)$ such that
    \[
     \overline{\partial E\cap \interior{\mathcal{C}}}= \{ f(x) \, x \st x \in \Sigma_{\mathcal{C}}\}.
    \]
    If the boundaries of two sets $E,F$ are $C^{k,\beta}$-graphs over $\Sigma_{\mathcal{C}}$ with respect to some functions $f,g$, we define
    \[
        \d_{C^{k,\beta}}(E,F):=\|f-g\|_{C^{k,\beta}(\Sigma_{\mathcal{C}})}.
    \]
    If $\{E_i\}_{i \in \N}, E$ have boundary given by $C^{k,\beta}$-graphs over $\Sigma_{\mathcal{C}}$, we say that $\overline{\partial E_i \cap \interior{\mathcal{C}}}$ converge to $\overline{\partial E \cap \interior{\mathcal{C}}}$ in $C^{k,\beta}$ (or, simply, that $E_i$ converge to a $E$ in $C^{k,\beta}$) if $\d_{C^{k,\beta}}(E_i,E)\to 0$ as $i\to\infty$.
\end{definition}

\begin{definition}\label{def:piecewise-C2}
    Let $\mathcal{C}\subset \R^n$ be a closed convex cone. We say that it is piecewise $C^2$ if $\partial B_1\cap \mathcal{C}$ is a piecewise $C^2$ domain in $\S^{n-1}$.
\end{definition}

\subsection{Proof of the strong quantitative isoperimetric inequality in convex cones}

Let $\mathcal{C}=\R^m\times \widetilde{\mathcal{C}} \subset  \R^n$, where $\widetilde{\mathcal{C}}$ does not contain lines, be a closed convex cone with non-empty interior. Denoting any point $x \in \R^n$ as $x=(x',\widetilde{x})$ with $x' \in \R^m$ and $\widetilde{x} \in \R^{n-m}$, for any bounded measurable set $E\subset \mathcal{C}$ we define
\begin{equation}\label{eq:DefBaricentroConi}
    \barycenter_{\R^m} E := \frac{1}{|E|}\int_E x' \de x.
\end{equation}

\begin{proposition}[Fuglede-type estimate in cones]\label{prop:FugledeConi}
%
Let $\mathcal{C}=\R^m\times \widetilde{\mathcal{C}} \subset  \R^n$ be a closed convex cone with non-empty interior, where $\widetilde{\mathcal{C}}$ does not contain lines. Suppose that $ |B_1 \cap \mathcal{C}|\ge v_0$. 
Then there exist $c_{\ref{prop:FugledeConi}}, C_{\ref{prop:FugledeConi}}, \epsilon_{\ref{prop:FugledeConi}}>0$ depending on $v_0, n, C_{\ref{thm:QuantitativeConesFraenkel}}(\mathcal{C})$ such that the following holds.

Let $E\subset \mathcal{C}$ be a bounded open set with Lipschitz boundary and volume $|E|=|K_{\mathcal{C}}|$ such that
\[
\overline{\partial E \setminus \partial \mathcal{C}} = \left\{ (1+u(x)) x \st x \in \Sigma_{\mathcal{C}}\right\},
\]
for a function $u :\Sigma_{\mathcal{C}}\to \R$ with $\|u\|_{C^1} \le \epsilon_{\ref{prop:FugledeConi}}$, where $\Sigma_{\mathcal{C}}:=  \S^{n-1} \cap \mathcal{C}$.
Then
\begin{equation*}
    \mu_{\mathcal{C}}^2(E) \le
    \mu_{\mathcal{C},0}^2(E) \le
    c_{\ref{prop:FugledeConi}} \| u\|^2_{H^1(\Sigma_{\mathcal{C}})} \le C_{\ref{prop:FugledeConi}} \left( D_{\mathcal{C}}(E) + |\barycenter_{\R^m}E|^2\right).
\end{equation*}
\end{proposition}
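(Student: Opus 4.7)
The plan is to follow the same scheme of \cref{lem:OScillationBoundedByH1Norm}, \cref{prop:EspansionePlambda} and \cref{prop:FugledeCapillarity}, with substantial simplifications due to the fact that the boundary of $K_\mathcal{C}$ is parametrized over $\Sigma_\mathcal{C}$ by the constant radial function $w\equiv 1$. All the extra terms carrying derivatives of $w_\lambda$ disappear, the outer unit normal to $K_\mathcal{C}$ is simply $\nu_{K_\mathcal{C}}(x)=x$ for $x\in\Sigma_\mathcal{C}$, and the algebra of the Taylor expansions collapses.

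The first inequality $\mu_\mathcal{C}^2(E)\le \mu_{\mathcal{C},0}^2(E)$ is immediate by taking $x'=0$ in the definition of $\mu_\mathcal{C}^2$, since $|E|=|K_\mathcal{C}|$ yields $s_E=1$. For the second inequality, I would parametrize $\partial^*E\cap\interior{\mathcal{C}}$ by $x\mapsto(1+u(x))x$ and use \cref{lemma:formulas} to write
\[
\nu_E((1+u)x)=\frac{(1+u)x-\nabla u}{\sqrt{(1+u)^2+|\nabla u|^2}},\qquad \frac{(1+u)x}{|(1+u)x|}=x.
\]
A short computation shows $\sqrt{(1+u)^2+|\nabla u|^2}-(1+u)=O(|\nabla u|^2)$ for $\|u\|_{C^1}\le \tfrac12$, hence $|\nu_E((1+u)x)-x|\le C(n)|\nabla u|$. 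Integrating against the area element $(1+u)^{n-2}\sqrt{(1+u)^2+|\nabla u|^2}$, which is uniformly bounded, yields $\mu_{\mathcal{C},0}^2(E)\le c_{\ref{prop:FugledeConi}}\|u\|_{H^1(\Sigma_\mathcal{C})}^2$.

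For the third and main inequality, I would first establish the auxiliary $L^1$-bound $\|u\|_{L^1(\Sigma_\mathcal{C})}^2\le C(D_\mathcal{C}(E)+|\barycenter_{\R^m}E|^2)$. By \cref{thm:QuantitativeConesFraenkel} there exists $x'\in\R^m$ with $|E\Delta(x'+K_\mathcal{C})|\le C\sqrt{D_\mathcal{C}(E)}$. The crucial observation is that, since $\mathcal{C}=\R^m\times\widetilde{\mathcal{C}}$, the set $K_\mathcal{C}$ is symmetric under the reflection $y'\mapsto -y'$ in the $\R^m$-factor, hence $\barycenter_{\R^m}K_\mathcal{C}=0$ and $\barycenter_{\R^m}(x'+K_\mathcal{C})=x'$. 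Using $|E|=|K_\mathcal{C}|$ and the boundedness of $E$ guaranteed by $\|u\|_{C^1}\le \epsilon_{\ref{prop:FugledeConi}}$, an elementary estimate gives $|x'-\barycenter_{\R^m}E|\le \frac{1}{|K_\mathcal{C}|}\int_{E\Delta(x'+K_\mathcal{C})}|y'|\de y\le C\sqrt{D_\mathcal{C}(E)}$. Combining the triangle inequality with \cite[Lemma~17.9]{MaggiBook} yields $|E\Delta K_\mathcal{C}|\le C(\sqrt{D_\mathcal{C}(E)}+|\barycenter_{\R^m}E|)$, and a radial-coordinate expansion analogous to the one in \cref{prop:FugledeCapillarity} shows $|E\Delta K_\mathcal{C}|\ge c\|u\|_{L^1}$, closing the $L^1$-bound.

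To conclude, I would expand the perimeter. Using \cref{lemma:formulas},
\[
P_\mathcal{C}(E)=\int_{\Sigma_\mathcal{C}}(1+u)^{n-2}\sqrt{(1+u)^2+|\nabla u|^2}\de\hausdorff^{n-1},
\]
and a Taylor expansion combined with the volume constraint, which gives $(n-1)\int u=-\tfrac{(n-1)^2}{2}\int u^2+O(\|u\|_{C^1}\|u\|_{L^2}^2)$, yields the second-order estimate
\[
P_\mathcal{C}(E)-P_\mathcal{C}(K_\mathcal{C})\ge \tfrac{1}{2}\|\nabla u\|_{L^2}^2-\tfrac{n-1}{2}\|u\|_{L^2}^2+O(\|u\|_{C^1}\|u\|_{H^1}^2).
\]
Applying \cref{lemma:NashIneq} to bound $\|u\|_{L^2}^2\le \delta\|u\|_{H^1}^2+C(\delta)\|u\|_{L^1}^2$ with $\delta$ small, absorbing the $O(\|u\|_{C^1}\|u\|_{H^1}^2)$ remainder into the left hand side by choosing $\epsilon_{\ref{prop:FugledeConi}}$ small enough, and inserting the $L^1$-bound from the previous step gives $\|u\|_{H^1}^2\le C_{\ref{prop:FugledeConi}}(D_\mathcal{C}(E)+|\barycenter_{\R^m}E|^2)$. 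The main bookkeeping difficulty, as opposed to a genuine obstacle, is tracking the dependence of the constants on $v_0$ (which enters the Nash-type inequality on $\Sigma_\mathcal{C}$ and the lower bound on the area element) and on $C_{\ref{thm:QuantitativeConesFraenkel}}(\mathcal{C})$; no new compactness or qualitative argument is needed.
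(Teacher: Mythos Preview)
Your proposal is correct and follows essentially the same approach as the paper's own proof: adapt \cref{lem:OScillationBoundedByH1Norm}, \cref{prop:EspansionePlambda}, and \cref{prop:FugledeCapillarity} to the cone setting with the radial function $w\equiv 1$, use \cref{thm:QuantitativeConesFraenkel} together with the barycenter comparison to control $\|u\|_{L^1}$, expand the perimeter to second order, and close with \cref{lemma:NashIneq}. The only point you leave implicit is that the optimal center $x'$ from \cref{thm:QuantitativeConesFraenkel} is itself bounded (needed so that $|y'|$ is bounded on $(x'+K_\mathcal{C})\setminus E$); this follows immediately from $E\subset B_{1+\epsilon}$ and $|E\Delta(x'+K_\mathcal{C})|<|K_\mathcal{C}|$, exactly as in the paper's proof of \cref{prop:FugledeCapillarity}.
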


\begin{proof}
Inequality $\mu_{\mathcal{C}}^2(E) \le \mu_{\mathcal{C},0}^2(E)$ follows by definition.
Moreover, the computations contained in the proof \cref{lem:OScillationBoundedByH1Norm} can be readily adapted to the present case. Indeed, here the domain of integration is $\Sigma_{\mathcal{C}}$ (instead of $\S^{n-1}_+$), and $w_{\lambda}$ is replaced by the constant function $1$. Hence, for $\epsilon_{\ref{prop:FugledeConi}}$ small enough, one gets the constant $c_{\ref{prop:FugledeConi}}$ such that $\mu_{\mathcal{C},0}^2(E) \le c_{\ref{prop:FugledeConi}} \| u\|^2_{H^1(\Sigma_{\mathcal{C}})}$.\\
So, it remains to prove the bound of $\|u\|_{H^1}^2$ from above in terms of $D_{\mathcal{C}}(E)+|\barycenter_{\R^m} E|^2$. One can go through the expansion of the perimeter performed in \cref{prop:EspansionePlambda}, observing that all the computations leading to \eqref{eq:zzExpansion1} hold pointwise. Also, here the identity \eqref{eq:zzExpansion2}, which imposes the volume constraint, is replaced with
    \begin{equation}\label{eq:volumeCone}
        \int_{\Sigma_{\mathcal{C}}}\frac{1}{n}=|K_{\mathcal{C}}|=|E|=\int_{\Sigma_{\mathcal{C}}}\frac{(1+u)^n}{n} = \int_{\Sigma_{\mathcal{C}}}\frac{1}{n}+u+\frac{n-1}{2}u^2+O(u^3).
    \end{equation}
Thus $\int_{\Sigma_{\mathcal{C}}}u = -\frac{n-1}{2}\int_{\Sigma_{\mathcal{C}}}u^2+O(u^3)$. Hence using the (significantly easier) computations with $w_{\lambda}$ replaced by $1$ leading to \eqref{eq:zzExpansion1} we obtain that
    \[
        P_{\mathcal{C}}(E)-P_{\mathcal{C}}(K_{\mathcal{C}})\geq \frac{1}{2}\int_{\Sigma_{\mathcal{C}}}|\grad u|^2-(n-1)u^2+\|u\|_{C^1}\cdot O(\|u\|_{H^1}^2).
    \]
We point out that a similar expansion was obtained in \cite[Lemma~3.1]{BaerFigalli17} in a suitable different class of cones. In fact, the proof of \cite[Lemma~3.1]{BaerFigalli17} might be carried out in our setting as well.

We can now proceed as we did in \cref{prop:FugledeCapillarity}. First, using the quantitative isoperimetric inequality recalled in \cref{thm:QuantitativeConesFraenkel}, one obtains that $|E\symmdiff K_{\mathcal{C}}|\leq C(D_{\mathcal{C}}(E)^{\frac12}+|\barycenter_{\R^m}E|)$. Moreover, using a volume expansion similar to~\eqref{eq:volumeCone}, the previous estimate can be turned into $\|u\|_{L^1(\Sigma_{\mathcal{C}})}^2\leq C(D_{\mathcal{C}}(E)+|\barycenter_{\R^m}E|^2)$, which is analogous to \eqref{eq:L1-IsoperimetricDeficit}. Notice that, having applied \cref{thm:QuantitativeConesFraenkel}, the constants appearing in the last inequality only depends on $n, C_{\ref{thm:QuantitativeConesFraenkel}}(\mathcal{C})$. Finally, the conclusion is again obtained as in \cref{prop:FugledeCapillarity}, using the Nash-type inequality in \cref{lemma:NashIneq} with a parameter $\delta$ small enough. The resulting constant will depend on $v_0, n, C_{\ref{thm:QuantitativeConesFraenkel}}(\mathcal{C})$. Indeed the constant in the Nash-type inequality depends only on the dimension and on a lower bound on $\hausdorff^{n-1}(\Sigma_{\mathcal{C}})$, which is uniformly bounded from below away from zero in terms of $n,v_0$; also, the choice of the parameter $\delta$ to plug into the Nash-type inequality depends only on the constants appearing in the proof.
\end{proof}

The next lemma is an easy exercise and it is analogous to \cref{lem:ProblemPlambda+LambdaVolumi}.

\begin{lemma}\label{lem:ProblemCone+LambdaVolumi}
Let $\mathcal{C}=\R^m\times \widetilde {\mathcal C}\subset \R^n$ be a closed convex cone with non-empty interior and such that $\widetilde{\mathcal{C}}\subset \R^{n-m}$ does not contain lines. Then for any $\bar\Lambda>n$ there holds
\begin{equation*}\label{eq:ProblemCone+LambdaVolumi}
    \inf\left\{
        P_{\mathcal{C}}(E) + \bar\Lambda \left| |E| - |K_{\mathcal{C}}| \right| \st E \subset \mathcal{C}, \,\, |E|<+\infty
    \right\} = P_{\mathcal{C}}(K_{\mathcal{C}}),
\end{equation*}
and the infimum is only achieved by  $K_{\mathcal{C}} + x'$ for any $x'\in\R^m$.
\end{lemma}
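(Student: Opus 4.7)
The plan is to reduce the minimization to a one-dimensional analysis via the sharp Lions--Pacella isoperimetric inequality in $\mathcal{C}$, namely $P_{\mathcal{C}}(E) \geq n |K_{\mathcal{C}}|^{1/n} |E|^{(n-1)/n}$. Testing the infimum with $E = K_{\mathcal{C}}$ immediately gives the upper bound $P_{\mathcal{C}}(K_{\mathcal{C}}) = n|K_{\mathcal{C}}|$. For the matching lower bound, given any competitor $E\subset\mathcal{C}$ with $|E|=v>0$, it suffices to prove
\[
f(v) := n|K_{\mathcal{C}}|^{1/n}\, v^{(n-1)/n} + \bar\Lambda\left|v - |K_{\mathcal{C}}|\right| \geq n|K_{\mathcal{C}}|
\qquad \forall\, v>0,
\]
with equality only at $v=|K_{\mathcal{C}}|$.

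For $v \geq |K_{\mathcal{C}}|$ both summands are nondecreasing and already $n|K_{\mathcal{C}}|^{1/n} v^{(n-1)/n}\geq n|K_{\mathcal{C}}|$, with strict inequality for $v>|K_{\mathcal{C}}|$. For $v\in(0,|K_{\mathcal{C}}|]$ I would set $t=v/|K_{\mathcal{C}}|$ and write $f(v)=|K_{\mathcal{C}}|\, g(t)$ with $g(t) = n t^{(n-1)/n} + \bar\Lambda(1-t)$. A direct computation shows $g''(t)<0$ on $(0,1)$, so $g$ is strictly concave; together with the endpoint values $g(1)=n$ and $g(0^+)=\bar\Lambda>n$, strict concavity forces the infimum of $g$ on $(0,1]$ to be attained only at $t=1$, yielding $g(t)>n$ on $(0,1)$. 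This is the only place where the assumption $\bar\Lambda>n$ enters decisively: it guarantees $g(0^+)>g(1)$, ruling out the competing endpoint.

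For the rigidity assertion, any minimizer $E$ must satisfy both $|E|=|K_{\mathcal{C}}|$ (from the strict analysis of $f$) and equality in the isoperimetric inequality in $\mathcal{C}$. The rigidity part of Lions--Pacella (which is where the hypothesis that $\widetilde{\mathcal{C}}$ contains no lines is used) then forces $E$ to be a unit ball intersected with $\mathcal{C}$ centered at a tip of $\mathcal{C}$; since the set of tips of $\mathcal{C}=\R^m\times\widetilde{\mathcal{C}}$ is exactly $\R^m\times\{0\}$, we conclude $E = K_{\mathcal{C}} + x'$ for some $x'\in\R^m$. The whole proof is routine once Lions--Pacella is invoked; the only delicate point is the concavity/endpoint analysis of $g$, which is the small obstacle requiring some care.
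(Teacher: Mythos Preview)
Your proof is correct and follows exactly the approach the paper indicates: the paper does not give a detailed proof of this lemma, stating only that it is ``an easy exercise\ldots analogous to'' \cref{lem:ProblemPlambda+LambdaVolumi}, which in turn ``readily follows by the isoperimetric inequality\ldots and by testing the functional with any rescaling of the isoperimetric bubble.'' You have filled in precisely these details, using the Lions--Pacella inequality and the one-dimensional analysis of $g(t)$.
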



We now collect some considerations analogous to those in \eqref{eq:PreConvention}--\eqref{eq:Convention}. For any $k\geq4$ and for any set of finite perimeter $F\subset \mathcal{C}$ with $|F|<+\infty$ we have that
\begin{equation}\label{eq:cone-PreConvention}
    P_{\mathcal{C}}(F)-\frac{1}{k}\mu_{\mathcal{C},0}^2(F)\geq P_{\mathcal{C}}(F)-\frac{2}{k}P_{\mathcal{C}}(F)\geq \frac{1}{2}P_{\mathcal{C}}(F),
\end{equation}
and thus we set $P_{\mathcal{C}}(F)-\frac{1}{k}\mu_{\mathcal{C},0}^2(F)=+\infty$ whenever $P_{\mathcal{C}}(F)=+\infty$ and $k\geq 4$.

Moreover, similarly to what we did in~\eqref{eq:AlternativeBarycenter}, it is convenient to also introduce the following related quantity that is well-defined for any set $F\subset \mathcal{C}$ with finite measure:
\begin{equation}\label{eq:DefBaricentroTILDEconi}
    \widetilde b_{\R^m}(F) = \int_F (\psi(x_1),\ldots,\psi(x_m))\de x,
\end{equation}
where $\psi$ was defined in \eqref{eq:DefPSI}.

\begin{proposition}\label{prop:selection-cones}
    Let $\mathcal{C}=\R^m\times\widetilde{\mathcal{C}}\subset\{x_n\geq0\}$ be a piecewise $C^2$ closed convex  cone, where $\tilde{\mathcal{C}}$ does not contain lines. Suppose $|B_1 \cap \mathcal{C}| \ge v_0$, for some $v_0>0$. 
    Then there exist $\Lambda_{\ref{prop:selection-cones}}>2n$, $R_{\ref{prop:selection-cones}}\geq 10$, $\Lambda_{\ref{prop:selection-cones}}'>0$, $ r'_{\ref{prop:selection-cones}}\in(0,1)$,
    depending on $n, v_0, C_{\ref{thm:QuantitativeConesFraenkel}}(\mathcal{C})$,
    and
    $k_{\ref{prop:selection-cones}}>4$, depending on $n, v_0, C_{\ref{thm:QuantitativeConesFraenkel}}(\mathcal{C}), \delta_{\ref{thm:almost-regularity-cones}}(\mathcal{C}, \Lambda_{\ref{prop:selection-cones}}', r'_{\ref{prop:selection-cones}} , \epsilon_{\ref{prop:FugledeConi}})$,
    such that the following holds. For any $k\geq k_{\ref{prop:selection-cones}}$, $k\in\N$, the minimization problem
    \begin{equation*}
        \inf \left\{
    \mathcal{G}_k(F) :=
        P_{\mathcal{C}}(F)+\Lambda_{\ref{prop:selection-cones}} \left||F|-|K_{\mathcal{C}}|\right|-\frac{1}{k}\mu_{\mathcal{C},0}^2(F)+|\widetilde{b}_{\R^m}(F)|^2
        \st 
        F\subset \mathcal{C}, \,\, |F|<+\infty
    \right\}
    \end{equation*}
    has an open minimizer $F_k$ such that
    \begin{itemize}
    \item $F_k$ is a $(\Lambda_{\ref{prop:selection-cones}}', r'_{\ref{prop:selection-cones}})$-almost minimizer of the perimeter in $\mathcal{C}$,
    
    \item $F_k \subset B_{R_{\ref{prop:selection-cones}}}(0)\cap \mathcal{C}$,

     \item $|F_k|= |K_{\mathcal{C}}|$ for any $k\ge k_{\ref{prop:selection-cones}}$,

     \item $\overline{\partial F_k \cap \interior{\mathcal{C}}}$ is a $C^{1, \beta_{\ref{thm:almost-regularity-cones}}(\mathcal{C})}$-graph over $\Sigma_{\mathcal{C}}$ for any $k\ge k_{\ref{prop:selection-cones}}$, and $\overline{\partial F_k \cap \interior{\mathcal{C}}} \to \overline{\partial K_{\mathcal{C}} \cap \interior{\mathcal{C}}}$ in $C^{1,\beta}$, for any $\beta \in (0,\beta_{\ref{thm:almost-regularity-cones}}(\mathcal{C}))$, as $k\to \infty$, in the sense of \cref{def:DistanzaC1-cone}.\\
     Additionally, we have that $\|u_k\|_{C^{1, \beta_{\ref{thm:almost-regularity-cones}}(\mathcal{C})}(\Sigma_{\mathcal{C}})}<\epsilon_{\ref{prop:FugledeConi}}$ for every $k\geq k_{\ref{prop:selection-cones}}$.    
\end{itemize}
\end{proposition}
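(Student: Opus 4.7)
The plan is to mirror the strategy of \cref{prop:MinimizationProblemSelection}, which itself treats the analogous (and more delicate) capillarity case. Fix $\Lambda>2n$ (to be made definite at the end) and $k\ge 4$, so that \eqref{eq:cone-PreConvention} ensures $\mathcal{G}_k$ takes values in $[0,+\infty]$, and note that by testing with $K_{\mathcal{C}}$ we can assume the infimum is strictly less than $\mathcal{G}_k(K_{\mathcal{C}})=P_{\mathcal{C}}(K_{\mathcal{C}})$. Similarly to \eqref{eq:RewritingAsAnisotropicPerimeter}, using \cref{lemma:mu-C-alternative} one rewrites
\[
    P_{\mathcal{C}}(F)-\tfrac{1}{k}\mu_{\mathcal{C},0}^2(F) = \int_{\partial^* F\cap \interior{\mathcal{C}}} \Psi_k(x,\nu_F)\de\hausdorff^{n-1} + \tfrac{n-1}{k}\int_F \tfrac{1}{|x|}\de x,
\]
for an integrand $\Psi_k(x,\nu) = (1-\tfrac{1}{k})|\nu| + \tfrac{1}{k|x|}\langle x,\nu\rangle$ which is bounded and elliptic on $\{|x|\le R\}\cap \interior{\mathcal{C}}$ for large $k$; combined with continuity of the volume and barycentric terms with respect to $L^1$-convergence of sets (see \eqref{eq:LipschitzianitaDiBtilde}), Reshetnyak lower semicontinuity yields lower semicontinuity of $\mathcal{G}_k$ on the complete metric space $(\mathbb{X},\d)=\bigl(\{F\subset\mathcal{C}: |F|<+\infty\},|F\Delta G|\bigr)$.

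I would then apply Ekeland's variational principle (\cref{thm:Ekeland}) to a minimizing sequence $\{\widetilde E_j^k\}$, obtaining sets $E_j^k$ satisfying $\mathcal{G}_k(E_j^k)\le \mathcal{G}_k(G)+\tfrac{1}{j}|G\Delta E_j^k|$ for every admissible $G$, exactly as in \eqref{eq:EkelandApplied}. Testing this with localized perturbations $G$ in a ball $B_r(x_0)$, estimating the $\mu_{\mathcal{C},0}^2$, barycentric, and volume increments via \eqref{eq:LipschitzianitaDiBtilde} and the isoperimetric inequality in $\mathcal{C}$, and absorbing low-order terms into $P_{\mathcal{C}}$ through the fact that $P_{\mathcal{C}}\ge P(E,\interior{\mathcal{C}})$ without the $\lambda$-factor, one obtains (for a suitably small $r'_{\ref{prop:selection-cones}}$ depending on $n,\Lambda$) that $E_j^k$ is a $(K,r'_{\ref{prop:selection-cones}})$-quasi-minimizer of the perimeter in $\mathcal{C}$, uniformly in $j$ and in $k\ge k_{\ref{prop:selection-cones}}$. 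Density estimates from \cref{thm:quasi-regularity} then give $|E_j^k\cap B_\rho(x)|\ge D\rho^n$ for all $x\in \overline{E_j^k}$ and $\rho\le \rho_0$.

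The crucial next step is the uniform confinement. By the same computation as \eqref{eq:subsub-Plambda}--\eqref{eq:zaqs}, comparing with $K_{\mathcal{C}}$ gives $||E_j^k|-|K_{\mathcal{C}}||\le C/k$ and, via \cref{thm:QuantitativeConesFraenkel}, $|E_j^k\Delta (K_{\mathcal{C}}+(x_j^k,0))|\le C/\sqrt{k}$ for some $x_j^k\in \R^m$; the barycentric penalization $|\widetilde{b}_{\R^m}|^2$ controls $|x_j^k|$ (since $|\widetilde{b}_{\R^m}(K_{\mathcal{C}}+(x_j^k,0))|\ge c\min\{1,|x_j^k|\}$), so $|E_j^k\Delta K_{\mathcal{C}}|\le C/\sqrt{k}$ uniformly, which combined with the density estimates gives $E_j^k\subset B_{R_{\ref{prop:selection-cones}}}$ for $k$ large. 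A standard compactness and lower-semicontinuity argument in $j\to\infty$ then produces a minimizer $F_k$, which inherits being a $(K,r'_{\ref{prop:selection-cones}})$-quasi-minimizer and satisfies $|F_k\Delta K_{\mathcal{C}}|\le C/\sqrt{k}$, hence $d_{\hausdorff}(\overline{\partial F_k\cap \interior{\mathcal{C}}},\overline{\partial K_{\mathcal{C}}\cap \interior{\mathcal{C}}})\to 0$.

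To upgrade $F_k$ to an almost minimizer of $P_{\mathcal{C}}$ one argues exactly as in \cref{prop:MinimizationProblemSelection}: comparing $\mathcal{G}_k(F_k)\le \mathcal{G}_k(G)$ on local perturbations, the $\mu_{\mathcal{C},0}^2$ term produces only a bounded volume excess (since $1/|x|$ is bounded away from the tip, and Hausdorff convergence pushes $F_k$ away from the origin for large $k$) and the $|\widetilde{b}_{\R^m}|^2$ term is Lipschitz, so $F_k$ is a $(\Lambda',r')$-almost minimizer with uniform constants. Since $\mathcal{C}$ is piecewise $C^2$, the $\varepsilon$-regularity theorem \cref{thm:almost-regularity-cones} applies, yielding $\overline{\partial F_k\cap \interior{\mathcal{C}}}\in C^{1,\beta_{\ref{thm:almost-regularity-cones}}}$ and $C^{1,\beta}$-convergence to $\overline{\partial K_{\mathcal{C}}\cap \interior{\mathcal{C}}}$ for any $\beta<\beta_{\ref{thm:almost-regularity-cones}}$; in particular $\|u_k\|_{C^{1,\beta_{\ref{thm:almost-regularity-cones}}}}<\epsilon_{\ref{prop:FugledeConi}}$ for $k\ge k_{\ref{prop:selection-cones}}$. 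Finally, the volume constraint $|F_k|=|K_{\mathcal{C}}|$ is obtained by a scaling contradiction argument as in the last step of \cref{prop:MinimizationProblemSelection}: testing minimality against $s_k F_k$ with $s_k=(|K_{\mathcal{C}}|/|F_k|)^{1/n}$, a Lipschitz expansion of all four terms of $\mathcal{G}_k$ (using the elementary inequality \eqref{eq:GeometricEstimate} for $\mu_{\mathcal{C},0}^2$ and $1$-Lipschitzness of $\psi$ for $\widetilde{b}_{\R^m}$) forces $\Lambda||F_k|-|K_{\mathcal{C}}||\le C(n,v_0,C_{\ref{thm:QuantitativeConesFraenkel}}(\mathcal{C}))|s_k-1|$, contradicting $\Lambda>2n$ unless $s_k=1$, once $\Lambda=\Lambda_{\ref{prop:selection-cones}}$ is chosen large enough. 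The main technical obstacle, as in the capillarity case, is verifying that the $\mu_{\mathcal{C},0}^2$-term only generates a controlled bounded volume perturbation so that standard regularity theory for perimeter almost minimizers applies; here this is easier than for capillarity since $1/|x|$ is integrable and smooth away from the (fixed) origin, and no non-translation-invariant reference point enters.
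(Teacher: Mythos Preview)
Your outline is correct and follows the paper's own proof essentially step by step: Ekeland's principle on $(\mathbb{X},|{\cdot}\Delta{\cdot}|)$, uniform quasi-minimality of the $E_j^k$, the confinement claim via \cref{thm:QuantitativeConesFraenkel} combined with the barycentric penalization, passage to a minimizer $F_k$, upgrade to almost-minimality and application of \cref{thm:almost-regularity-cones}, and finally the scaling contradiction to force $|F_k|=|K_{\mathcal{C}}|$.

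One small slip: in your rewriting via \cref{lemma:mu-C-alternative}, the extra volume term $\tfrac{n-1}{k}\int_F|x|^{-1}\de x$ is double-counted, since with your $\Psi_k(x,\nu)=(1-\tfrac1k)|\nu|+\tfrac{1}{k|x|}\langle x,\nu\rangle$ the divergence theorem already gives $\int_{\partial^*F\cap\interior{\mathcal{C}}}\Psi_k(x,\nu_F)\de\hausdorff^{n-1}=P_{\mathcal{C}}(F)-\tfrac1k\mu_{\mathcal{C},0}^2(F)$ on its own. In fact the paper just uses the simpler observation $P_{\mathcal{C}}(F)-\tfrac1k\mu_{\mathcal{C},0}^2(F)=(1-\tfrac1k)P_{\mathcal{C}}(F)+\tfrac{n-1}{k}\int_F|x|^{-1}\de x$, which makes lower semicontinuity immediate without introducing an anisotropic integrand at all; this is one of the ways the cone case is genuinely easier than the capillarity one.
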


\begin{proof}
    The proof is very similar to that of \cref{prop:MinimizationProblemSelection} (which actually presents further complications), thus we only emphasize here the main differences.
    Let $\Lambda> 2n$ and let $k \in \N$ with $k>4$. We will fix $\Lambda=\Lambda_{\ref{prop:selection-cones}}$ at the end of the proof.
    As in \cref{prop:MinimizationProblemSelection}, we can use Ekeland's variational principle (recalled in \cref{thm:Ekeland}) in the metric space $\mathbb X = \{F\subset \mathcal{C}:|F|<+\infty\}$ endowed with the $L^1$ distance. Therefore, one finds a sequence of sets $E_j^k\subset\mathcal{C}$ such that
    \begin{equation}\label{eq:ekeland-set-cone}
    \begin{split}
        \mathcal{G}_k(E_j^k)&\le \min\left\{ \inf \left\{
    \mathcal{G}_k(F) 
        \st 
        F\subset \mathcal{C}, \,\, |F|<+\infty
    \right\} 
    + \frac1j
    , \mathcal{G}_k(K_{\mathcal{C}}) \right\}, \\
    \mathcal{G}_k(E_j^k)  &\le \mathcal{G}_k(G) + \frac1j |G \Delta E_j^k|, 
    \end{split}
    \end{equation}
    for any $G\subset\mathcal{C}$ with finite measure. 
    First we show that $E_j^k$ is a $(K_{\ref{prop:selection-cones}},r_{\ref{prop:selection-cones}})$-quasi-minimizer of the perimeter in $\mathcal{C}$, where $K_{\ref{prop:selection-cones}}>0, r_{\ref{prop:selection-cones}} \in(0,1)$ depend only on $n$ and $v_0$.
    Exactly as in \eqref{eq:MassMinimizingSeqCloseToBlambda}, one can see that $\tfrac{|E_j^k|}{|K_{\mathcal{C}}|}\in\left(\frac{1}{2}, \frac{3}{2}\right)$. 
    Let now $x_0\in\mathcal{C}$, and $G\subset\mathcal{C}$ be a set of finite perimeter such that $G\symmdiff E_j^k\Subset B_r(x_0)$ for some $r\le r_{\ref{prop:selection-cones}}$ which we should take small enough. 
    Condition~\eqref{eq:ekeland-set-cone} gives the following estimate for any $j\geq 1$
    \begin{equation*}
    \begin{split}
        P_{\mathcal{C}}(E_j^k)&\leq P_{\mathcal{C}}(G)+\left(\Lambda+1\right)|G\symmdiff E_j^k|+\frac{1}{k}\left(\mu_{\mathcal{C},0}^2(E_j^k)-\mu_{\mathcal{C},0}^2(G)\right)+|\widetilde b_{\R^m}(G)|^2-|\widetilde b_{\R^m}(E_j^k)|^2\\
        &\leq P_{\mathcal{C}}(G)+\left(\Lambda+1\right)|G\symmdiff E_j^k|+\frac{1}{2k}\bigg(\int_{\partial^*E_j^k \cap \interior{\mathcal{C}}\cap B_r(x_0)}\left|\nu_{E_j^k}(x)-\frac{x}{|x|}\right|^2\de \hausdorff^{n-1}\\
        &\qquad -\int_{\partial^*G \cap \interior{\mathcal{C}}\cap B_r(x_0)}\left|\nu_{G}(x)-\frac{x}{|x|}\right|^2\de \hausdorff^{n-1}\bigg)+ C(n)|G\symmdiff E_j^k|\\
        &\leq P_{\mathcal{C}}(G)+\left(\Lambda+1+C(n)\right)\tilde C(n,v_0)rP_{\mathcal{C}}(E_j^k\symmdiff G)+\frac{2}{k}\left(P_{\mathcal{C}}(E_j^k;B_r(x_0))+P_{\mathcal{C}}(G;B_r(x_0))\right)\\
        &\leq P_{\mathcal{C}}(G)+\left(\left(\Lambda+1+C(n)\right)\tilde C(n,v_0)r+\frac{2}{k}\right)\left(P_{\mathcal{C}}(E_j^k;B_r(x_0))+P_{\mathcal{C}}(G;B_r(x_0))\right),
    \end{split}
    \end{equation*}
    where we used the following rough estimate coming from the isoperimetric inequality in $\mathcal{C}$:
    \[
    \begin{split}
        |G\symmdiff E_j^k|\leq |B_1|^{\frac1n}r|G\symmdiff E_j^k|^{\frac{n-1}{n}}\leq |B_1|^{\frac1n}r\cdot \frac{1}{n|K_{\mathcal{C}}|^{1/n}}\left(P_{\mathcal{C}}(E_j^k;B_r(x_0))+P_{\mathcal{C}}(G;B_r(x_0))\right).
    \end{split}
    \]
    Since $k>4$, choosing $r_{\ref{prop:selection-cones}}=\min\{1,(4\tilde C(n,|K_{\mathcal{C}}|)(\Lambda+1+C(n)))^{-1}\}$ we deduce that $E_j^k$ is a $(K_{\ref{prop:selection-cones}},r_{\ref{prop:selection-cones}})$-quasi-minimizer of the perimeter in $\mathcal{C}$ with $K_{\ref{prop:selection-cones}}=7$. Therefore, we apply \cref{thm:quasi-regularity} to get the density estimates: there exist $D_{\ref{prop:selection-cones}},\rho_{\ref{prop:selection-cones}}\in(0,1)$, depending on $n, v_0$, such that for every $k>4$, every $j\geq 1$, every radius $\rho\in(0,\rho_{\ref{prop:selection-cones}})$ and every point $x\in\partial E_j^k$
    \begin{equation}\label{eq:density-estimates-cone}
        |\mathcal{C}\cap E_j^k\cap B_{\rho}(x)|\geq D_{\ref{prop:selection-cones}}\rho^n, \qquad |\mathcal{C}\cap B_{\rho}(x)\setminus E_j^k|\geq D_{\ref{prop:selection-cones}}\rho^n.
    \end{equation}

    Analogously to the proof of \cref{prop:MinimizationProblemSelection}, we claim that there is $\tilde k_{\ref{prop:selection-cones}}>4$ depending on $n,v_0, C_{\ref{thm:QuantitativeConesFraenkel}}$ such that the following holds. For every subsequence $k_i\to+\infty$, with $k_i\ge \tilde k_{\ref{prop:selection-cones}}$, and any subsequence $j_i\geq k_i$, denoting by $A_i=E_{j_i}^{k_i}$, there holds
    \begin{equation}\label{eq:KeyClaimCones}
        |A_i\symmdiff K_{\mathcal{C}}|\leq \frac{ \bar  C(n,v_0, C_{\ref{thm:QuantitativeConesFraenkel}})}{\sqrt{k_i}} \quad \forall i, \qquad d_{\hausdorff}\left(\overline{\partial A_i\cap \interior{\mathcal{C}}} , \overline{\partial K_{\mathcal{C}}\cap \interior{\mathcal{C}}} \right) \le 
        \frac{C(n,v_0, C_{\ref{thm:QuantitativeConesFraenkel}}) }{k_i^{\frac{1}{2n}}}
        \quad \forall i.
    \end{equation}
    By definition of $A_i$, we have that $\mathcal{G}_{k_i}(A_i)\leq\mathcal{G}_{k_i}(K_{\mathcal{C}})$ for any $i$, and thus
    \begin{equation}\label{eq:subsub-coni}
        P_{\mathcal{C}}(A_i)+\Lambda||A_i|-|K_{\mathcal{C}}||-\frac{1}{k_i}\mu_{\mathcal{C},0}^2(A_i)+|\widetilde b_{\R^m} (A_i)|^2 \leq P_{\mathcal{C}}(K_{\mathcal{C}}),
    \end{equation}
    for any $i$.
    As $k_i> 4$, we use~\eqref{eq:cone-PreConvention} and we deduce that $P_{\mathcal{C}}(A_i)\leq 2P_{\mathcal{C}}(K_{\mathcal{C}})$. Hence, we also control the asymmetry as $\mu_{\mathcal{C},0}^2(A_i)\leq 4P_{\mathcal{C}}(K_{\mathcal{C}})$. Since $\Lambda>2n$, using the parameter $\bar\Lambda=\Lambda-n>n$ in \cref{lem:ProblemCone+LambdaVolumi}, we deduce from~\eqref{eq:subsub-coni} that
    \begin{equation}\label{eq:difference-measure}
        n||A_i|-|K_{\mathcal{C}}||\leq \frac{1}{k_i}\mu_{\mathcal{C},0}^2(A_i)\leq \frac{4P_{\mathcal{C}}(K_{\mathcal{C}})}{k_i} = \frac{4n|K_{\mathcal{C}}|}{k_i},
    \end{equation}
    for any $i$.
    Using \cref{thm:QuantitativeConesFraenkel}, rearranging the terms in~\eqref{eq:subsub-coni} and introducing the parameter $t_i=\frac{|A_i|}{|K_{\mathcal{C}}|}$, we get
    \begin{equation}\label{eq:alpha-bar}
        \frac{t_i^{\frac{n-1}{n}}P_{\mathcal{C}}(K_{\mathcal{C}})}{C_{\ref{thm:QuantitativeConesFraenkel}}}\alpha_{\mathcal{C}}^2(A_i)+|\widetilde b_{\R^m}(A_i)|^2\leq P_{\mathcal{C}}(K_{\mathcal{C}})\left(1-t_i^{\frac{n-1}{n}}\right)+\frac{4P_{\mathcal{C}}(K_{\mathcal{C}})}{k_i},
    \end{equation}
    for any $i$, where we also used the bound $\mu_{\mathcal{C},0}^2(A_i)\leq 4P_{\mathcal{C}}(K_{\mathcal{C}})$.\\
    Notice that $|t_i-1|\leq \frac{4}{k_i}$ for any $i$ thanks to~\eqref{eq:difference-measure}. Therefore, we obtain $\frac{P_{\mathcal{C}}(K_{\mathcal{C}})}{C_{\ref{thm:QuantitativeConesFraenkel}}}\alpha_{\mathcal{C}}^2(A_i)+|\widetilde b_{\R^m}(A_i)|^2\leq \frac{C(n)P_{\mathcal{C}}(K_{\mathcal{C}})}{k_i}$ for any $i$. This shows that, denoting by $x_i\in\R^m$ a point realizing $\alpha_{\mathcal{C}}(A_i)$, we get
    \[
        |A_i\symmdiff (x_i+K_{\mathcal{C}})| \leq \left|A_i\symmdiff (x_i+t_i^{1/n}K_{\mathcal{C}})\right|+\left|t_i^{1/n}K_{\mathcal{C}}\symmdiff K_{\mathcal{C}}\right|\leq \frac{C(n,C_{\ref{thm:QuantitativeConesFraenkel}})}{\sqrt{k_i}}+||A_i|-|K_{\mathcal{C}}||\leq \frac{C(n,C_{\ref{thm:QuantitativeConesFraenkel}})}{\sqrt{k_i}},
    \]
    for any $i$, where $C =C(n,C_{\ref{thm:QuantitativeConesFraenkel}})$ may change from one expression to the other.\\    
Now it can be readily checked that $|\widetilde{b}_{\R^m}(x_i + K_{\mathcal{C}})| \ge c(n,v_0) \min\{1, |x_i|\}$ for any $i$. Moreover from~\eqref{eq:alpha-bar} we also deduce that $|\widetilde b_{\R^m}(A_i)|^2\leq \frac{C(n)}{k_i}$ for any $i$. Hence
    \begin{equation*} \begin{split}
        c(n,v_0) \min\{1, |x_i|\} \le |\widetilde b_{\R^m}(A_i)| + \left| |\widetilde b_{\R^m}(x_i+K_{\mathcal{C}})| -  |\widetilde b_{\R^m}(A_i)|\right|
         &\le \frac{C(n)}{\sqrt{k_i}} + C(n) |A_i\symmdiff (x_i+K_{\mathcal{C}})|
         \\&\le \frac{C(n,C_{\ref{thm:QuantitativeConesFraenkel}})}{\sqrt{k_i}}, \end{split}
    \end{equation*}
for any $i$. 
Hence there is $\tilde k_{\ref{prop:selection-cones}}>4$ depending on $n,v_0, C_{\ref{thm:QuantitativeConesFraenkel}}$ such that the previous estimate yields an upper bound on $|x_i|$ for any $k_i \ge \tilde k_{\ref{prop:selection-cones}}$. This immediately implies that $|A_i\symmdiff K_{\mathcal{C}}|\leq \frac{ \bar C(n,v_0, C_{\ref{thm:QuantitativeConesFraenkel}})}{\sqrt{k_i}}$ for any $i$ with $k_i \ge \tilde k_{\ref{prop:selection-cones}}$.\\
We already know that the sets $A_i$ satisfy the density estimates~\eqref{eq:density-estimates-cone}. Up to decreasing $D_{\ref{prop:selection-cones}}$, depending on $n,v_0$ only, we can assume that the same density estimates hold for $K_{\mathcal{C}}$ as well.
It is then standard to prove that whenever $\sqrt{k_i}>\tfrac{ \bar C(n,v_0, C_{\ref{thm:QuantitativeConesFraenkel}})}{D_{\ref{prop:selection-cones}}\rho_{\ref{prop:selection-cones}}^n}$, then $d_{\hausdorff}(\overline{\partial A_i\cap \interior{\mathcal{C}}}, \overline{\partial K_{\mathcal{C}}\cap \interior{\mathcal{C}}} )\leq \rho_{\ref{prop:selection-cones}}$, hence    $|A_i\symmdiff K_{\mathcal{C}}|\geq D_{\ref{prop:selection-cones}}d_{\hausdorff}(\overline{\partial A_i\cap \interior{\mathcal{C}}}, \overline{\partial K_{\mathcal{C}}\cap \interior{\mathcal{C}}})^n$. Therefore
    \[
        d_{\hausdorff}(\overline{\partial A_i\cap \interior{\mathcal{C}}}, \overline{\partial K_{\mathcal{C}}\cap \interior{\mathcal{C}}})
        \leq 
        C(n,v_0, C_{\ref{thm:QuantitativeConesFraenkel}}) k_i^{-1/2n}
        \qquad 
        \text{when } \sqrt{k_i}>\tfrac{ \bar C(n,v_0, C_{\ref{thm:QuantitativeConesFraenkel}})}{D_{\ref{prop:selection-cones}}\rho_{\ref{prop:selection-cones}}^n}.
    \]
Up to updating $\tilde k_{\ref{prop:selection-cones}}$ with $\max\Big\{ \tilde k_{\ref{prop:selection-cones}}, \Big( \tfrac{ \bar C(n,v_0, C_{\ref{thm:QuantitativeConesFraenkel}})}{D_{\ref{prop:selection-cones}}\rho_{\ref{prop:selection-cones}}^n} \Big)^2 \Big\} $, the claim \eqref{eq:KeyClaimCones} follows.

\medskip

Similarly to the proof of \cref{prop:MinimizationProblemSelection}, the main claim \eqref{eq:KeyClaimCones} together with density estimates imply the existence of a radius $R_{\ref{prop:selection-cones}}=R_{\ref{prop:selection-cones}}(n,v_0, C_{\ref{thm:QuantitativeConesFraenkel}}) \ge 10$ and of an open $(K_{\ref{prop:selection-cones}}, r_{\ref{prop:selection-cones}})$-quasi-minimizer $F_k$ which solves the minimization problem for $\mathcal{G}_k$, such that $F_k \subset B_{R_{\ref{prop:selection-cones}}} (0) \cap \mathcal{C}$, for any $k \ge \tilde k_{\ref{prop:selection-cones}}$. Also, the same arguments presented above for the sets $A_i$ can be repeated for the minimizers $F_k$, leading to the estimates
\begin{equation}\label{eq:StimeQuantitativeFkConi}
        |F_k\symmdiff K_{\mathcal{C}}|\leq \frac{ \bar  C(n,v_0, C_{\ref{thm:QuantitativeConesFraenkel}})}{\sqrt{k}} , \qquad d_{\hausdorff}\left(\overline{\partial F_k\cap \interior{\mathcal{C}}} , \overline{\partial K_{\mathcal{C}}\cap \interior{\mathcal{C}}} \right) \le 
        \frac{C(n,v_0, C_{\ref{thm:QuantitativeConesFraenkel}}) }{k^{\frac{1}{2n}}},
    \end{equation}
for every $k \ge \tilde k_{\ref{prop:selection-cones}}$.

\medskip

In order to improve the estimates in \eqref{eq:StimeQuantitativeFkConi} into $C^1$-bounds, we claim that there exist $\Lambda_{\ref{prop:selection-cones}}'>0, r'_{\ref{prop:selection-cones}}\in(0,1)$ depending on  $n, v_0, C_{\ref{thm:QuantitativeConesFraenkel}}(\mathcal{C})$ such that $F_k$ is a $(\Lambda_{\ref{prop:selection-cones}}', r'_{\ref{prop:selection-cones}} )$-almost minimizer of the perimeter in $\mathcal{C}$ for any $k \ge \tilde k_{\ref{prop:selection-cones}}$.\\
This claim follows at once by combining the minimality of $F_k$, the second estimate in \eqref{eq:StimeQuantitativeFkConi}, and \cref{lemma:mu-C-alternative} (see also the proof of the analogous claim in the proof of \cref{prop:MinimizationProblemSelection}).

\medskip

Consider now the parameter $\epsilon_{\ref{prop:FugledeConi}} = \epsilon_{\ref{prop:FugledeConi}}(n, v_0, C_{\ref{thm:QuantitativeConesFraenkel}})$ given by \cref{prop:FugledeConi}. We can apply the $\eps$-regularity \cref{thm:almost-regularity-cones} on the piecewise $C^2$ cone $\mathcal{C}$, getting constants $\delta_{\ref{thm:almost-regularity-cones}}=\delta_{\ref{thm:almost-regularity-cones}}(\mathcal{C}, \Lambda_{\ref{prop:selection-cones}}', r'_{\ref{prop:selection-cones}} , \epsilon_{\ref{prop:FugledeConi}}) \in(0,1)$ and $\beta_{\ref{thm:almost-regularity-cones}}(\mathcal{C}) \in (0,1/2)$. Now for $k \ge \hat k_{\ref{prop:selection-cones}} := \max\{ \tilde k_{\ref{prop:selection-cones}} , \bar C( n,v_0, C_{\ref{thm:QuantitativeConesFraenkel}})^2 / \delta_{\ref{thm:almost-regularity-cones}}^2 \} $, the first inequality in \eqref{eq:StimeQuantitativeFkConi} implies that $|F_k \Delta K_{\mathcal{C}} |\le \delta_{\ref{thm:almost-regularity-cones}}$. Hence \cref{thm:almost-regularity-cones} implies that $\overline{\partial F_k \cap \interior{\mathcal{C}}}$ is parametrized over $\S^{n-1} \cap \mathcal{C}$ by a $C^{1, \beta_{\ref{thm:almost-regularity-cones}}}$ function $u_k$ with $\|u_k\|_{C^{1, \beta_{\ref{thm:almost-regularity-cones}}}} \le \epsilon_{\ref{prop:FugledeConi}}$.

\medskip

It only remains to prove that, up to enlarging $\hat k_{\ref{prop:selection-cones}} $ into a final number $ k_{\ref{prop:selection-cones}} $ (possibly adding further dependence on $n,v_0, C_{\ref{thm:QuantitativeConesFraenkel}} $), then $|F_k|=|K_{\mathcal{C}}|$. This claim follows exactly as in the proof of \cref{prop:MinimizationProblemSelection} by testing the minimality of $F_k$ with the rescaled set $s_kF_k$ such that $|s_kF_k|= |K_{\mathcal{C}}|$. It is also in this final passage that one fixes $\Lambda= \Lambda_{\ref{prop:selection-cones}}$ depending on  $n,v_0, C_{\ref{thm:QuantitativeConesFraenkel}} $ (see the proof of \cref{prop:MinimizationProblemSelection}).
\end{proof}


\begin{theorem}[Strong quantitative inequality in cones]\label{thm:ConesStrongQuantitative}
Let $\mathcal{C}=\R^m\times\widetilde{\mathcal{C}}\subset\{x_n\geq0\}$ be a piecewise $C^2$ closed convex  cone, where $\tilde{\mathcal{C}}$ does not contain lines. Suppose $|B_1 \cap \mathcal{C}| \ge v_0$, for some $v_0>0$. Then there exists $ C_{\ref{thm:ConesStrongQuantitative}}>0$ depending on $n, v_0, C_{\ref{thm:QuantitativeConesFraenkel}}(\mathcal{C}), \delta_{\ref{thm:almost-regularity-cones}}(\mathcal{C}, \Lambda_{\ref{prop:selection-cones}}', r'_{\ref{prop:selection-cones}} , \epsilon_{\ref{prop:FugledeConi}})$ such that the following holds.

Let $E\subset \mathcal{C}$ be a set of finite perimeter with $|E| \in (0,+\infty)$. Then
\begin{equation*}
    \mu_{\mathcal{C}}^2(E) \le C_{\ref{thm:ConesStrongQuantitative}} D_{\mathcal{C}}(E).
\end{equation*}
\end{theorem}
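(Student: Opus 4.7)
The plan is to essentially mirror the contradiction-based selection scheme used for the capillarity case in \cref{thm:capillarityStrongQuantitative}, now feeding in the cone-adapted ingredients \cref{prop:FugledeConi} and \cref{prop:selection-cones} already established in this section. First, by the scaling invariance of $\mu_{\mathcal{C}}$ and $D_{\mathcal{C}}$, I reduce to sets with $|E|=|K_{\mathcal{C}}|$. Then I argue by contradiction: assume there is a sequence $E_k\subset\mathcal{C}$ with $|E_k|=|K_{\mathcal{C}}|$ and $k^2 D_{\mathcal{C}}(E_k)<\mu_{\mathcal{C}}^2(E_k)$. Using the invariance of $\mathcal{C}=\R^m\times\widetilde{\mathcal{C}}$ and of all involved quantities under translations along the $\R^m$-factor, I translate so that $\widetilde{b}_{\R^m}(E_k)=0$; in particular this choice of center gives $\mu_{\mathcal{C}}^2(E_k)\le\mu_{\mathcal{C},0}^2(E_k)$, and together with the bound $\mu_{\mathcal{C},0}^2(E_k)\le 2P_{\mathcal{C}}(E_k)$ one also concludes $D_{\mathcal{C}}(E_k)\to 0$.

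Next, for every $k\ge k_{\ref{prop:selection-cones}}$ I invoke \cref{prop:selection-cones} to obtain an open minimizer $F_k$ of $\mathcal{G}_k$ with $|F_k|=|K_{\mathcal{C}}|$, contained in a fixed ball, and whose boundary $\overline{\partial F_k\cap\interior{\mathcal{C}}}$ is parametrized over $\Sigma_{\mathcal{C}}$ by a function $u_k$ with $\|u_k\|_{C^{1,\beta_{\ref{thm:almost-regularity-cones}}}}<\epsilon_{\ref{prop:FugledeConi}}$. Testing the minimality of $F_k$ against the competitor $E_k$, and using \cref{lem:ProblemCone+LambdaVolumi} to kill the volume-penalization term, I obtain the chain
\[
P_{\mathcal{C}}(F_k)-\tfrac{1}{k}\mu_{\mathcal{C},0}^2(F_k)+|\widetilde{b}_{\R^m}(F_k)|^2\le P_{\mathcal{C}}(E_k)-\tfrac{1}{k}\mu_{\mathcal{C},0}^2(E_k)\le P_{\mathcal{C}}(K_{\mathcal{C}})+\tfrac{P_{\mathcal{C}}(K_{\mathcal{C}})}{k^2}\mu_{\mathcal{C},0}^2(E_k)-\tfrac{1}{k}\mu_{\mathcal{C},0}^2(E_k),
\]
where I used the contradiction hypothesis on $D_{\mathcal{C}}(E_k)$. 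For $k$ large, the last two terms combine to $-\tfrac{1}{2k}\mu_{\mathcal{C},0}^2(E_k)$, which together with $|F_k|=|K_{\mathcal{C}}|$ yields the two crucial bounds $D_{\mathcal{C}}(F_k)\le\mu_{\mathcal{C},0}^2(F_k)/(kP_{\mathcal{C}}(K_{\mathcal{C}}))$ and $|\widetilde{b}_{\R^m}(F_k)|^2\le\mu_{\mathcal{C},0}^2(F_k)/k$, plus the auxiliary information $\mu_{\mathcal{C},0}^2(E_k)\le 2\mu_{\mathcal{C},0}^2(F_k)$.

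Now the small-$C^1$ bound on $u_k$ allows me to apply \cref{prop:FugledeConi} to $F_k$, giving
\[
\mu_{\mathcal{C},0}^2(F_k)\le C_{\ref{prop:FugledeConi}}\bigl(D_{\mathcal{C}}(F_k)+|\barycenter_{\R^m}F_k|^2\bigr).
\]
Since $|F_k|=|K_{\mathcal{C}}|$ and $\widetilde{b}_{\R^m}$ coincides with $|F_k|\barycenter_{\R^m}F_k$ as soon as $F_k\subset B_{100}$ (which holds for large $k$ by the uniform confinement in \cref{prop:selection-cones}), the barycentric bound above becomes $|\barycenter_{\R^m}F_k|^2\le C\mu_{\mathcal{C},0}^2(F_k)/k$. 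Combining these two inequalities and absorbing the $\mu_{\mathcal{C},0}^2(F_k)$ terms for $k$ large forces first $|\barycenter_{\R^m}F_k|^2\le D_{\mathcal{C}}(F_k)$ and then $D_{\mathcal{C}}(F_k)\le\tfrac{1}{2}D_{\mathcal{C}}(F_k)$, whence $D_{\mathcal{C}}(F_k)=0$. By the equality case in the isoperimetric inequality in $\mathcal{C}$ and $\barycenter_{\R^m}F_k=0$, I conclude $F_k=K_{\mathcal{C}}$, so $\mu_{\mathcal{C},0}^2(F_k)=0$, contradicting $\mu_{\mathcal{C},0}^2(E_k)>0\le 2\mu_{\mathcal{C},0}^2(F_k)$.

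The substantive obstacles have already been absorbed into the two supporting results: the Fuglede-type estimate \cref{prop:FugledeConi}, which requires a careful perimeter expansion together with a bootstrap from the Fraenkel-type \cref{thm:QuantitativeConesFraenkel} and a Nash-type interpolation on $\Sigma_{\mathcal{C}}$; and the selection principle \cref{prop:selection-cones}, whose non-trivial parts are the existence of minimizers of $\mathcal{G}_k$ on the unconstrained class $\{F\subset\mathcal{C}:|F|<+\infty\}$ via Ekeland's principle, together with the upgrade from quasi-minimality to $(\Lambda',r')$-almost minimality and the application of the $\eps$-regularity \cref{thm:almost-regularity-cones} available precisely because $\mathcal{C}$ is piecewise $C^2$. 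Once those tools are in place, the proof of \cref{thm:ConesStrongQuantitative} is an essentially mechanical adaptation of \cref{thm:capillarityStrongQuantitative}, but simpler: the oscillation reference point is always $0$ (no $-\lambda e_n$ shift), the parametrizing function of the model $K_{\mathcal{C}}$ is the constant $1$ rather than the non-trivial $w_\lambda$, and $\mu_{\mathcal{C},0}^2$ admits the clean alternative expression from \cref{lemma:mu-C-alternative}, which makes both the almost-minimality check and the testing step more transparent.
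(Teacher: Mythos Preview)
Your proposal is correct and follows the same overall scheme as the paper: reduce to $|E|=|K_{\mathcal{C}}|$, argue by contradiction, feed the contradicting set(s) into the selection principle \cref{prop:selection-cones}, compare with $E$ and with $K_{\mathcal{C}}$ via \cref{lem:ProblemCone+LambdaVolumi}, and close with the Fuglede estimate \cref{prop:FugledeConi}.

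The one noteworthy difference is in how the contradiction is set up. You take a \emph{sequence} $E_k$ with $k^2 D_{\mathcal{C}}(E_k)<\mu_{\mathcal{C}}^2(E_k)$ and reach a contradiction ``for $k$ large enough'', exactly as in the capillarity proof \cref{thm:capillarityStrongQuantitative}. The paper instead fixes a \emph{single} explicit integer
\[
k\ \ge\ \max\Big\{k_{\ref{prop:selection-cones}},\ \bar k(n),\ \tfrac{2C_{\ref{prop:FugledeConi}}}{v_0^2},\ \tfrac{4C_{\ref{prop:FugledeConi}}}{n v_0}\Big\},
\]
assumes a single contradicting set $E$, and applies \cref{prop:selection-cones} only once for that $k$. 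This lets the paper write down $C_{\ref{thm:ConesStrongQuantitative}}$ explicitly and avoid any ``for $k$ large'' compactness at this final stage---which is precisely the feature highlighted in \cref{sec:StrategyProof} (``the proof of the final \cref{thm:ConesStrongQuantitative} is more explicit than that of the capillarity case''). Your sequence argument is equally valid and the resulting constant has the same dependencies, but the paper's version makes that dependence quantitatively transparent.
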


\begin{proof}
We claim it is possible to take 
\begin{equation*}
    C_{\ref{thm:ConesStrongQuantitative}}
    := 
    \left( 1 + \max\left\{
    k_{\ref{prop:selection-cones}} , \bar k ,
    \frac{2C_{\ref{prop:FugledeConi}}}{v_0^2},
    \frac{4C_{\ref{prop:FugledeConi}}}{n v_0}
    \right\} \right)^2
    \ge \left( 1 + \max\left\{
    k_{\ref{prop:selection-cones}} , \bar k ,
    \frac{2C_{\ref{prop:FugledeConi}}}{|K_{\mathcal{C}}|^2},
    \frac{4C_{\ref{prop:FugledeConi}}}{P_{\mathcal{C}}(K_{\mathcal{C}})}
    \right\} \right)^2,
\end{equation*}
where $\bar k = \bar k (n)>0$ is the least integer such that $\tfrac{n\omega_n}{k^2} - \tfrac1k \le - \tfrac{1}{2k}$ for any $k\ge \bar k$. 
Let $k\in\N$ be the least integer such that
\[
k \ge \max\left\{
    k_{\ref{prop:selection-cones}} , \bar k ,
    \frac{2C_{\ref{prop:FugledeConi}}}{v_0^2},
    \frac{4C_{\ref{prop:FugledeConi}}}{n\omega_n v_0}
    \right\}.
\]
We stress that $k$ will be a fixed parameter in this proof. Since the inequality is scaling invariant, it is sufficient to prove the claim for sets $E\subset\mathcal{C}$ such that  $|E|=|K_{\mathcal{C}}|$. 
Let us suppose by contradiction that there exists a set $E\subset \mathcal{C}$ with $|E| = |K_{\mathcal{C}}|$ and $k^2D_{\mathcal{C}}(E) <  C_{\ref{thm:ConesStrongQuantitative}} D_{\mathcal{C}}(E)<\mu_{\mathcal{C}}^2(E)$.
In particular, $E\neq x'+K_{\mathcal{C}}$ for any $x'\in\R^m$.
Without loss of generality, we can suppose that $\tilde b_{\R^m}(E)=0$.
Similarly to the proof of \cref{thm:capillarityStrongQuantitative}, we have that 
    \begin{equation}\label{eq:DMuZero-cone}
        D_{\mathcal{C}}(E)<\frac{1}{k^2}\mu_{\mathcal{C}}^2(E)\leq \frac{1}{k^2}\mu_{\mathcal{C},0}^2(E).
    \end{equation}
We now apply \cref{prop:selection-cones}. We stress that in this proof we do not need the whole sequence of sets generated by \cref{prop:selection-cones}, since $k$ is fixed in this proof. Since $k\ge k_{\ref{prop:selection-cones}}$, by \cref{prop:selection-cones} there exist constants $\Lambda_{\ref{prop:selection-cones}}>2n$, $R_{\ref{prop:selection-cones}}\ge 10$ such that the auxiliary minimization problem
    \begin{equation*}
        \inf \left\{
    \mathcal{G}_k(F) :=
        P_{\mathcal{C}}(F)+\Lambda_{\ref{prop:selection-cones}} \left||F|-|K_{\mathcal{C}}|\right|-\frac{1}{k}\mu_{\mathcal{C},0}^2(F)+|\widetilde{b}_{\R^m}(F)|^2
        \st 
        F\subset \mathcal{C}, \,\, |F|<+\infty
    \right\}
    \end{equation*}
    admits an open minimizer $F \subset B_{R_{\ref{prop:selection-cones}}}(0)\cap \mathcal{C}$. Additionally $|F|=|K_{\mathcal{C}}|$ and there exists $u\in C^1(\Sigma_{\mathcal{C}})$ parametrizing $\overline{\partial F\cap \interior{\mathcal{C}}}$, with $\|u\|_{C^1}<\epsilon_{\ref{prop:FugledeConi}}$. Testing the minimality of $F$ with $E$, which satisfies $\widetilde b_{\R^m}(E)=0$ and $|E|=|K_{\mathcal{C}}|$, and using \cref{lem:ProblemCone+LambdaVolumi}, we get
     \begin{equation}\label{eq:energy-comparison-FE-cone}
    \begin{split}
        P_{\mathcal{C}}(F)-\frac{1}{k}\mu_{\mathcal{C},0}^2(F)+|\widetilde{b}_{\R^m} (F)|^2
        &=P_{\mathcal{C}}(F)+\Lambda_{\ref{prop:selection-cones}} \left||F|-|K_{\mathcal{C}}|\right|-\frac{1}{k}\mu_{\mathcal{C},0}^2(F)+|\widetilde{b}_{\R^m} (F)|^2
        \\&\leq P_{\mathcal{C}}(E)-\frac{1}{k} \mu_{\mathcal{C},0}^2(E)\\
        &\overset{\eqref{eq:DMuZero-cone}}{\leq} P_{\mathcal{C}}(K_{\mathcal{C}})+\frac{P_{\mathcal{C}} (K_{\mathcal{C}})}{k^2}  \mu_{\mathcal{C},0}^2(E) -\frac{1}{k}\mu_{\mathcal{C},0}^2(E)\\
        &\leq P_{\mathcal{C}}(F)+\Lambda_{\ref{prop:selection-cones}} \left||F|-|K_{\mathcal{C}}|\right|+\frac{P_{\mathcal{C}} (K_{\mathcal{C}})}{k^2}  \mu_{\mathcal{C},0}^2(E)-\frac{1}{k} \mu_{\mathcal{C},0}^2(E)
        \\
        &=  P_{\mathcal{C}}(F)+\frac{P_{\mathcal{C}} (K_{\mathcal{C}})}{k^2}  \mu_{\mathcal{C},0}^2(E)-\frac{1}{k} \mu_{\mathcal{C},0}^2(E),
    \end{split}
    \end{equation}
    Since $k\geq \bar k(n)$, then
    \begin{equation*}
        \frac{P_{\mathcal{C}}(K_{\mathcal{C}})}{k^2}\mu_{\mathcal{C},0}^2(E)-\frac{1}{k}\mu_{\mathcal{C},0}^2(E) \leq -\frac{1}{2k}\mu_{\mathcal{C},0}^2(E).
    \end{equation*}
    Hence, from~\eqref{eq:energy-comparison-FE-cone} we deduce that 
    \begin{equation}\label{eq:mu-btilde-cone}
        0<\mu_{\mathcal{C},0}^2(E)\leq 2\mu_{\mathcal{C},0}^2(F),\qquad |\widetilde b_{\R^m}(F)|^2\leq \frac{1}{k}\mu_{\mathcal{C},0}^2(F).
    \end{equation}
    Since $|F|=|K_{\mathcal{C}}|$, using the first and third lines in~\eqref{eq:energy-comparison-FE-cone} we obtain
    \[
        D_{\mathcal{C}}(F)\leq \frac{\mu_{\mathcal{C},0}^2(F)}{k\cdot P_{\mathcal{C}}(K_{\mathcal{C}})}.
    \]
    Since $\|u\|_{C^1}<\epsilon_{\ref{prop:FugledeConi}}$, we can apply \cref{prop:FugledeConi} to continue this estimate as
    \begin{equation}\label{eq:arew}
        D_{\mathcal{C}}(F)\leq \frac{\mu_{\mathcal{C},0}^2(F)}{k\cdot P_{\mathcal{C}}(K_{\mathcal{C}})}\leq \frac{1}{k\cdot P_{\mathcal{C}}(K_{\mathcal{C}})}C_{\ref{prop:FugledeConi}}\left(D_{\mathcal{C}}(F)+|\barycenter_{\R^m} F|^2\right).
    \end{equation}
    Moreover, since $|F|=|K_{\mathcal{C}}|$ and $F\subset B_{10}(0)$, hence $\barycenter_{\R^m} F=|K_{\mathcal{C}}|^{-1}\widetilde b_{\R^m}(F)$. Therefore, using~\eqref{eq:mu-btilde-cone} and \cref{prop:FugledeConi} again we get that
    \[
        |\barycenter_{\R^m}F|^2\leq \frac{1}{k|K_{\mathcal{C}}|^2}\mu_{\mathcal{C},0}^2(F)\leq \frac{C_{\ref{prop:FugledeConi}}}{k|K_{\mathcal{C}}|^2}\left(D_{\mathcal{C}}(F)+|\barycenter_{\R^m}F|^2\right).
    \]
    Since $k\geq \frac{2C_{\ref{prop:FugledeConi}}}{|K_{\mathcal{C}}|^2}$ we see that $|\barycenter_{\R^m}F|^2\leq D_{\mathcal{C}}(F)$. Plugging this inequality in~\eqref{eq:arew}, since $k\geq \frac{4C_{\ref{prop:FugledeConi}}}{P_{\mathcal{C}}(K_{\mathcal{C}})}$, we deduce that $D_{\mathcal{C}}(F)\leq \frac{1}{2}D_{\mathcal{C}}(F)$. Hence, $D_{\mathcal{C}}(F) = 0$. Recalling that $|F|=|K_{\mathcal{C}}|$ and $|\barycenter_{\R^m}F|^2\leq D_{\mathcal{C}}(F)$, then $F=K_{\mathcal{C}}$, contradicting the first estimate in~\eqref{eq:mu-btilde-cone}.
\end{proof}

\subsection{Strong quantitative inequalities with barycentric asymmetry in cones}

As discussed in \cref{sec:BarycentricCapillarity} in the case of capillarity problems, and by the very same reasons, our approach also allows to derive barycentric versions of quantitative isoperimetric inequalities in convex cones. Here, splitting $\R^n=\R^m\times \R^{n-m}$ as usual, we introduce the $\R^m$-diameter of a set, defined as the diameter of the orthogonal projection of a set on $\R^m$, and denoted by $\diam_{\R^m}E:=\diam(\pi_{\R^m}(E))$. We thus obtain the following result.

\begin{corollary}[Barycentric strong quantitative inequalities in cones]\label{cor:barycentric-ineq-cones}
    Let $\mathcal{C}=\R^m\times\widetilde{\mathcal{C}}\subset\{x_n\geq0\}$ be a piecewise $C^2$ closed convex  cone, where $\tilde{\mathcal{C}}$ does not contain lines. Suppose $|B_1 \cap \mathcal{C}| \ge v_0$, for some $v_0>0$. Then there exists $ C_{\ref{cor:barycentric-ineq-cones}}>0$ depending on $n, v_0, C_{\ref{thm:QuantitativeConesFraenkel}}(\mathcal{C}), \delta_{\ref{thm:almost-regularity-cones}}(\mathcal{C}, \Lambda_{\ref{prop:selection-cones}}', r'_{\ref{prop:selection-cones}} , \epsilon_{\ref{prop:FugledeConi}})$ such that the following holds.
    
    Let $E\subset \mathcal{C}$ be a set of finite perimeter with $|E|=|K_{\mathcal{C}}|$, and let $x^*\in\R^m$ be any point such that $\widetilde{b}_{\R^m}(E-x^*)=0$. Then
    \begin{equation}\label{eq:BaricentricaConiConBtilde}
        |E\symmdiff(K_{\mathcal{C}}+x^*)|^2+\int_{\partial^*E\cap \interior{\mathcal{C}}}\left|\nu_E(x)-\frac{x-x^*}{|x-x^*|}\right|^2d\hausdorff^{n-1}_x \le C_{\ref{cor:barycentric-ineq-cones}} D_{\mathcal{C}}(E).
    \end{equation}
    If $d=\diam_{\R^m}E<+\infty$, then there exists a constant $C_{\ref{cor:barycentric-ineq-cones}}'>0$     
    depending on $n$, $v_0$, $C_{\ref{thm:QuantitativeConesFraenkel}}(\mathcal{C})$, $\delta_{\ref{thm:almost-regularity-cones}}(\mathcal{C}, \Lambda_{\ref{prop:selection-cones}}',$ $r'_{\ref{prop:selection-cones}},$ $\epsilon_{\ref{prop:FugledeConi}})$, $d$
    such that
    \[
        |E\symmdiff(K_{\mathcal{C}}+\barycenter_{\R^m}E)|^2+\int_{\partial^*E\cap \interior{\mathcal{C}}}\left|\nu_E(x)-\frac{x-\barycenter_{\R^m}E}{|x-\barycenter_{\R^m}E|}\right|^2d\hausdorff^{n-1}_x\leq C_{\ref{cor:barycentric-ineq-cones}}'D_{\mathcal{C}}(E).
    \]
\end{corollary}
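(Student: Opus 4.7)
\textbf{Proof proposal for \cref{cor:barycentric-ineq-cones}.} The plan is to combine the strategy behind \cref{thm:ConesStrongQuantitative} with the observations discussed in \cref{sec:BarycentricCapillarity}: running the same selection-principle contradiction against an auxiliary functional that \emph{simultaneously} penalizes the two asymmetries appearing on the left hand side.

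First I would reduce the first inequality to the case $x^*=0$. Since $\mathcal{C}=\R^m\times \widetilde{\mathcal{C}}$ is invariant under translations by vectors of $\R^m\times\{0\}$, the change of variables $E\mapsto E-x^*$ preserves $D_\mathcal{C}$, maps $K_\mathcal{C}+x^*$ to $K_\mathcal{C}$, converts the oscillation integral into $2\mu_{\mathcal{C},0}^2(E-x^*)$, and sends the hypothesis $\widetilde b_{\R^m}(E-x^*)=0$ into $\widetilde b_{\R^m}(E)=0$. It is thus enough to prove
\begin{equation*}
|E\symmdiff K_\mathcal{C}|^2+\mu_{\mathcal{C},0}^2(E)\le C\,D_\mathcal{C}(E),
\end{equation*}
for every set of finite perimeter $E\subset \mathcal{C}$ with $|E|=|K_\mathcal{C}|$ and $\widetilde b_{\R^m}(E)=0$.

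Next I would upgrade the Fuglede estimate in \cref{prop:FugledeConi}. If $E$ is parametrized over $\Sigma_\mathcal{C}$ by $1+u$ with $\|u\|_{C^1}\le \epsilon_{\ref{prop:FugledeConi}}$, the same volume expansion as in \eqref{eq:volumeCone} together with $\min w\equiv 1>0$ gives
\begin{equation*}
|E\symmdiff K_\mathcal{C}|\le C\,\|u\|_{L^1(\Sigma_\mathcal{C})}\le C\,\|u\|_{H^1(\Sigma_\mathcal{C})},
\end{equation*}
so that \cref{prop:FugledeConi} yields the strengthened bound $|E\symmdiff K_\mathcal{C}|^2+\mu_{\mathcal{C},0}^2(E)\le C\bigl(D_\mathcal{C}(E)+|\barycenter_{\R^m}E|^2\bigr)$. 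I would then replace the auxiliary functional of \cref{prop:selection-cones} by
\begin{equation*}
\widetilde{\mathcal{G}}_k(F):=P_\mathcal{C}(F)+\Lambda||F|-|K_\mathcal{C}||-\tfrac{1}{k}\bigl(\mu_{\mathcal{C},0}^2(F)+|F\symmdiff K_\mathcal{C}|^2\bigr)+|\widetilde b_{\R^m}(F)|^2,
\end{equation*}
and observe that the additional term $-\tfrac{1}{k}|F\symmdiff K_\mathcal{C}|^2$ is a bounded, globally Lipschitz volume-type perturbation. Hence the full argument of \cref{prop:selection-cones} -- lower semicontinuity, Ekeland's variational principle, uniform quasi-minimality/density estimates, contradiction with the sharp Fraenkel inequality to get $L^1$-closeness to $K_\mathcal{C}$, promotion to almost minimality of the perimeter in $\mathcal{C}$, and $\eps$-regularity via \cref{thm:almost-regularity-cones} -- applies verbatim, producing minimizers $F_k$ with $|F_k|=|K_\mathcal{C}|$ and $\overline{\partial F_k\cap\interior{\mathcal{C}}}\to \overline{\partial K_\mathcal{C}\cap\interior{\mathcal{C}}}$ in $C^{1,\beta}$.

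Then I would run the contradiction of \cref{thm:ConesStrongQuantitative} with $\mu_{\mathcal{C},0}^2(F)$ replaced everywhere by $\mu_{\mathcal{C},0}^2(F)+|F\symmdiff K_\mathcal{C}|^2$. Comparing $\widetilde{\mathcal{G}}_k(F_k)\le \widetilde{\mathcal{G}}_k(E_k)$ with $\widetilde{\mathcal{G}}_k(F_k)\ge \widetilde{\mathcal{G}}_k(K_\mathcal{C})-\tfrac{1}{k}(\cdots)$ exactly as in \eqref{eq:energy-comparison-FE-cone}, for $k$ large one obtains
\begin{equation*}
\mu_{\mathcal{C},0}^2(E_k)+|E_k\symmdiff K_\mathcal{C}|^2\le 2\bigl(\mu_{\mathcal{C},0}^2(F_k)+|F_k\symmdiff K_\mathcal{C}|^2\bigr),\qquad |\widetilde b_{\R^m}(F_k)|^2\le \tfrac{1}{k}\bigl(\mu_{\mathcal{C},0}^2(F_k)+|F_k\symmdiff K_\mathcal{C}|^2\bigr),
\end{equation*}
and $D_\mathcal{C}(F_k)\le \tfrac{1}{kP_\mathcal{C}(K_\mathcal{C})}(\mu_{\mathcal{C},0}^2(F_k)+|F_k\symmdiff K_\mathcal{C}|^2)$. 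The strengthened Fuglede bound above applied to $F_k$, together with $\barycenter_{\R^m}F_k=|K_\mathcal{C}|^{-1}\widetilde b_{\R^m}(F_k)$ (using $F_k\subset B_{R_{\ref{prop:selection-cones}}}\cap \mathcal{C}$ and $|F_k|=|K_\mathcal{C}|$), first yields $|\barycenter_{\R^m}F_k|^2\le D_\mathcal{C}(F_k)$ for $k$ large, then $D_\mathcal{C}(F_k)\le \tfrac{1}{2}D_\mathcal{C}(F_k)$, forcing $F_k=K_\mathcal{C}$ and contradicting the positivity assumption on $E_k$.

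For the second statement with the genuine barycenter $\barycenter_{\R^m}E$, I would repeat the whole construction after replacing $\psi$ in \eqref{eq:DefBaricentroTILDEconi} by $\psi_d(t):=d\,\psi(t/d)$. Since $\psi_d$ is linear on $[-d,d]$, whenever $\diam_{\R^m}E\le d$ the new $\widetilde b_{\R^m,d}(E-x^*)=0$ forces $x^*=\barycenter_{\R^m}E$; the Lipschitz constant of $\psi_d$ and the truncation threshold change \eqref{eq:LipschitzianitaDiBtilde} only by a factor depending on $d$, so the selection principle and the contradiction step go through unchanged at the cost of a constant $C'_{\ref{cor:barycentric-ineq-cones}}$ depending on $d$. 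I do not foresee any substantial obstacle beyond bookkeeping: the genuinely new input is just the trivial $L^1$-Fuglede estimate $|E\symmdiff K_\mathcal{C}|\lesssim \|u\|_{H^1}$, and the only mildly delicate point is checking that inserting the extra term $-\tfrac{1}{k}|F\symmdiff K_\mathcal{C}|^2$ in $\widetilde{\mathcal{G}}_k$ still keeps $F_k$ in the range of applicability of \cref{thm:almost-regularity-cones}, which follows because $|F\symmdiff K_\mathcal{C}|^2$ is a bounded Lipschitz volume-type functional that produces the standard $\Lambda|F\symmdiff G|$ terms in the almost-minimality estimate.
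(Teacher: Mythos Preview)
Your proposal is correct and follows essentially the same route that the paper indicates: the paper gives no detailed proof for this corollary but simply refers to the capillarity analogues in \cref{sec:BarycentricCapillarity} (i.e., \cref{cor:tilde-bar-quantitative}, \cref{cor:bar-quantitative}, \cref{cor:barycentric-Fraenkel}), whose scheme you have reproduced faithfully in the cone setting. The only minor variation is that you treat the two asymmetries simultaneously via the single penalization $-\tfrac{1}{k}(\mu_{\mathcal{C},0}^2+|F\symmdiff K_{\mathcal{C}}|^2)$, whereas the paper's capillarity discussion handles the oscillation and Fraenkel terms in separate corollaries; this is harmless since, as you note, $|F\symmdiff K_{\mathcal{C}}|^2$ is a bounded $L^1$-Lipschitz volume term and does not affect any step of the selection principle. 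One small imprecision: in the second part, $\widetilde b_{\R^m,d}(E-x^*)=0$ does not \emph{force} $x^*=\barycenter_{\R^m}E$; rather, $\barycenter_{\R^m}E$ is an \emph{admissible} choice of $x^*$ because $E-\barycenter_{\R^m}E$ lies in the region where $\psi_d$ is linear, and this is all you need.
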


\appendix

\section{Cornerstones in Calculus of Variations}\label{AppendixA}

\subsection{Regularity theory}

In this work, we exploit results in regularity theory for sets satisfying two possible weak notions of perimeter minimality. The first and weaker one is the notion of $(K,r_0)$-quasi-minimaly.

\begin{definition}\label{def:quasiminimizer}
Let $\mathcal{C}\subset \R^n$ be a closed convex cone with non-empty interior ${\rm int}(\mathcal{C})$. Let $K\ge 1, r_0>0$. A set $E\subset \mathcal{C}$ of locally finite perimeter is a $(K,r_0)$-quasi-minimizer in $\mathcal{C}$ if
    \[
        P(E, B_r(x)\cap {\rm int}(\mathcal{C}))\leq K P(G, B_r(x)\cap {\rm int}(\mathcal{C})),
    \]
for any set $G \subset \mathcal{C}$ such that $G\Delta E \Subset B_r(x)$ for some $r \le r_0$ and some $x \in \mathcal{C}$.
\end{definition}

It is well-known that $(K,r_0)$-quasi-minimizers satisfy density estimates.

\begin{theorem}\label{thm:quasi-regularity}
Let $\mathcal{C}\subset \R^n$ be a closed convex cone with non-empty interior, and let $E\subset \mathcal{C}$ be a $(K,r_0)$-quasi-minimizer in $\mathcal{C}$. Denote $v_0:= \inf_{x \in \mathcal{C}} |\mathcal{C} \cap B_1(x)| $.

Then $E$ has an open representative. Moreover, there exist $\rho_{\ref{thm:quasi-regularity}}>0$, $D_{\ref{thm:quasi-regularity}} \in(0,1)$ depending on $n, K, r_0, v_0$ such that, identifying $E$ with its open representative, there holds
\[
|\mathcal{C}\cap E \cap B_\rho(x)| \ge D_{\ref{thm:quasi-regularity}} \rho^n 
\qquad
\forall\, \rho \le \rho_{\ref{thm:quasi-regularity}}, \, x \in \overline{E}
\]
and
\[
|\mathcal{C}\cap B_\rho(x) \setminus E| \ge D_{\ref{thm:quasi-regularity}} \rho^n
\qquad
\forall\, \rho \le \rho_{\ref{thm:quasi-regularity}}, \, x \in \partial E.
\]
\end{theorem}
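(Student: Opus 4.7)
The argument is the standard quasi-minimizer density estimate, adapted to the cone via the Lions--Pacella isoperimetric inequality. The main step is to derive a differential inequality for $m(s):=|\mathcal{C}\cap E\cap B_s(x)|$, and the only point requiring care is ensuring the isoperimetric constant is uniform in terms of $v_0$.

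\smallskip

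First I would fix $x\in\mathcal{C}$ and a radius $s\in(0,r_0)$ with $P(E,\partial B_s(x))=0$ (which holds for a.e.~$s$), and test quasi-minimality against the competitor $G:=E\setminus B_\sigma(x)$ for $\sigma<s$, which clearly satisfies $G\Delta E\Subset B_s(x)$. Decomposing $P(E,B_s(x)\cap\iinterior{\mathcal{C}})$ and $P(G,B_s(x)\cap\iinterior{\mathcal{C}})$ over $\overline{B_\sigma(x)}$ and $B_s(x)\setminus\overline{B_\sigma(x)}$, the two $P(E,(B_s\setminus\overline{B_\sigma})\cap\iinterior{\mathcal{C}})$ contributions cancel up to a factor $(K-1)$, and letting $s\downarrow\sigma$ gives
\[
P(E,B_\sigma(x)\cap\iinterior{\mathcal{C}})\leq K\,\hausdorff^{n-1}\bigl(E\cap\partial B_\sigma(x)\cap\iinterior{\mathcal{C}}\bigr)
\]
for a.e.~$\sigma\in(0,r_0)$.

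\smallskip

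Next, by the coarea formula $m'(\sigma)=\hausdorff^{n-1}(E\cap\partial B_\sigma(x)\cap\mathcal{C})$ a.e., and since the boundary of $E\cap B_\sigma(x)$ inside $\iinterior{\mathcal{C}}$ is the union of $\partial^*E\cap B_\sigma\cap\iinterior{\mathcal{C}}$ and $E\cap\partial B_\sigma\cap\iinterior{\mathcal{C}}$, we obtain $P(E\cap B_\sigma(x),\iinterior{\mathcal{C}})\leq (K+1)m'(\sigma)$. Applying the Lions--Pacella isoperimetric inequality in the convex cone $\mathcal{C}$ to $E\cap B_\sigma(x)$ (which has constant $(n|K_{\mathcal{C}}|^{1/n})^{-1}\leq (nv_0^{1/n})^{-1}$, since $0\in\mathcal{C}$ forces $|K_{\mathcal{C}}|\geq v_0$) gives the differential inequality
\[
m(\sigma)^{\frac{n-1}{n}}\leq \frac{K+1}{n\,v_0^{1/n}}\,m'(\sigma)\qquad\text{for a.e.\ }\sigma\in(0,r_0).
\]
Rewriting as $\frac{d}{d\sigma}m^{1/n}\geq \frac{v_0^{1/n}}{n(K+1)}$ and integrating over $(0,\rho)$ with $m(0)=0$ yields the lower density estimate with $D_{\ref{thm:quasi-regularity}}:=\bigl(v_0^{1/n}/(n(K+1))\bigr)^n$ and $\rho_{\ref{thm:quasi-regularity}}:=r_0$, provided $m(\sigma)>0$ for small $\sigma$; this last property holds for any $x\in\overline{E}$ once $E$ is identified with its open representative (see below).

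\smallskip

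For the complement estimate at $x\in\partial E$, I repeat the argument verbatim with the competitor $G:=E\cup(B_\sigma(x)\cap\mathcal{C})$, which still lies in $\mathcal{C}$ and satisfies $G\Delta E\Subset B_s(x)$: the same decomposition produces $P(E,B_\sigma(x)\cap\iinterior{\mathcal{C}})\leq K\,\hausdorff^{n-1}(\partial B_\sigma(x)\cap\iinterior{\mathcal{C}}\setminus E)$, and one then applies Lions--Pacella to $(B_\sigma(x)\cap\mathcal{C})\setminus E$, which lies in $\mathcal{C}$, to close the argument. Finally, the existence of the open representative is a standard consequence of the density estimates themselves: defining $E^*$ to be the set of points of Lebesgue density $1$ of $E$ relative to $\mathcal{C}$, the differential inequality above forces the topological and measure-theoretic boundaries of $E^*$ to coincide, so $E^*$ is open and coincides with $E$ up to a null set, which is the representative on which the density bounds have been established.

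\smallskip

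The only genuinely delicate point is the uniformity of the Lions--Pacella constant in the isoperimetric inequality when the base point $x$ is near $\partial\mathcal{C}$; however, that inequality is global on $\mathcal{C}$ and its constant depends only on $|K_{\mathcal{C}}|$, hence only on $v_0$, which resolves the issue.
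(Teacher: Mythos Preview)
Your argument is correct and is precisely the ``adapt \cite[Theorem~21.11]{MaggiBook}'' route that the paper alludes to: the paper does not actually prove this theorem but only points to \cite[Theorem~4.2]{ShanmugalingamQuasiminimizers} (the PI-space version) and to an adaptation of Maggi's proof. Your use of the Lions--Pacella inequality as the relative isoperimetric inequality in $\mathcal{C}$, with constant $n|K_{\mathcal{C}}|^{1/n}\ge n\,v_0^{1/n}$, is exactly what makes the constants depend only on $n,K,r_0,v_0$, and the rest of the differential-inequality argument and the passage to the open representative are standard and handled correctly.
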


A proof of \cref{thm:quasi-regularity} holding in the higher generality of PI spaces can be found in \cite[Theorem 4.2]{ShanmugalingamQuasiminimizers}. Observe that in \cite{ShanmugalingamQuasiminimizers}, the perimeter functional automatically coincides with the relative perimeter in ${\rm int}(\mathcal{C})$, hence the definition of quasiminimal set in \cite[Definition 3.1]{ShanmugalingamQuasiminimizers} coincides with our \cref{def:quasiminimizer}. Alternatively, a proof of \cref{thm:quasi-regularity} follows by adapting the proof of \cite[Theorem 21.11]{MaggiBook} working with $(K,r_0)$-quasi-minimal sets.

\bigskip

The second and stronger notion of generalized perimeter minimizer is that of $(\Lambda, r_0)$-almost minimizer, that we introduce here in the context of weighted anisotropic perimeter functionals. We first recall the definition of elliptic integrands as given in \cite[Definition~1.1]{DePhilippisMaggi2015}.

\begin{definition}\label{def:unif-elliptic-perimeter}
    Given an open set $\Omega\subset \R^n$, we say that $\Phi:\overline \Omega\times \R^n\to[0,+\infty]$ is an elliptic integrand on $\Omega$ if it is lower semicontinuous, $\Phi(x,\cdot)$ is convex, and $\Phi(x,t\nu) = t\Phi(x,\nu)$ for every $x\in \overline \Omega$, $\nu\in\R^n$, $t\geq 0$.
    
    If $\Phi$ is an elliptic integrand and $E\subset \Omega$ is a set of locally finite perimeter in $\Omega$, then we define the associated perimeter
    \[
        P_{\Phi}(E,A)\coloneqq\int_{A\cap \partial^*E}\Phi(x,\nu_E(x))\de\hausdorff^{n-1}(x), 
    \]
    for any open set $A\subset \Omega$.
    Given $\kappa_1\geq 1$ and $\kappa_2\geq 0$, we say that $\Phi$ is a regular elliptic integrand on $\Omega$ with ellipticity constant $k_1$ and Lipschitz constant $k_2$, denoted by
    \[
    \Phi \in \mathcal{E}(\Omega, k_1, k_2),
    \]
    if $\Phi(x,\cdot)\in C^{2,1}(\S^{n-1})$ for every $x\in\overline \Omega$, and for every $x,y\in\overline \Omega$ and $\nu_1,\nu_2\in\S^{n-1}$ there holds
    \begin{equation*}
    \begin{split}
        &\frac{1}{\kappa_1}\leq \Phi(x,\nu_1)\leq \kappa_1,\\
        &|\Phi(x,\nu_1)-\Phi(y,\nu_1)|+|\grad \Phi(x,\nu_1)-\grad\Phi(y,\nu_1)|\leq \kappa_2|x-y|,\\
        &|\grad\Phi(x,\nu_1)|+\|\grad^2\Phi(x,\nu_1)\|+\frac{\|\grad^2\Phi(x,\nu_1)-\grad^2\Phi(x,\nu_2)\|}{|\nu_1-\nu_2|}\leq \kappa_1,\\
        &\Braket{\grad^2\Phi(x,\nu_1)\nu_2, \nu_2 } \geq\frac{1}{\kappa_1}|\nu_2-(\nu_2\cdot\nu_1)\nu_1|^2,
    \end{split}
    \end{equation*}
    where $\nabla\Phi$, $\nabla^2 \Phi$ here denote gradient and Hessian with respect to the $\nu$-variable.
\end{definition}

We can thus define a $(\Lambda,r_0)$-almost minimizer for a perimeter $P_\Phi$ as above.

\begin{definition}\label{def:almostminimizer}
    Let $\Omega\subset\R^n$ be an open set. Let $\Phi: \overline{\Omega}\times \R^n\to[0,+\infty]$ be an elliptic integrand on $\Omega$. 
    Let $\Lambda,r_0>0$. We say that a set $E\subset \Omega$ of locally finite perimeter is a $(\Lambda,r_0)$-almost minimizer of $P_{\Phi}$ in $\Omega$ if
    \[
        P_{\Phi}(E, B_r(x)\cap \Omega) \leq P_{\Phi}(G , B_r (x)\cap \Omega)+\Lambda|E\symmdiff G|
    \]
    whenever $E\symmdiff G\Subset B_r(x)\cap \overline{\Omega}$ for some $r\le r_0$ and $x \in \overline{\Omega}$.

    If $\Phi(x,\nu) = |\nu|$, i.e., $P_\Phi$ coincides with the usual (relative) perimeter in $\Omega$, we will just say that $E$ is a $(\Lambda,r_0)$-almost minimizer of the perimeter in $\Omega$.
\end{definition}

It is well-understood that given a sequence of $(\Lambda,r_0)$-almost minimizers of the perimeter in $\R^n$, if such a sequence converges in $L^1$ to a smooth set, then the convergence can be improved to $C^{1,\beta}$ convergence of the topological boundaries (see \cite{MaggiBook} and references therein). The theory has been generalized  to $(\Lambda,r_0)$-almost minimizers for perimeters defined by elliptic integrands in a half-space in \cite{DePhilippisMaggi2015}. In particular, combining Theorem~3.1 and Remark~3.6 in \cite{DePhilippisMaggi2015}, one gets the following theorem, which yields convergence in $C^{1,\beta}$ up to the boundary for sequences of $(\Lambda,r_0)$-almost minimizers corresponding to a sequence of elliptic integrands with uniform ellipticity bounds.

\begin{theorem}\label{thm:almost-regularity}
Let $\Phi_k:H\times \R^n\to[0,+\infty)$ be a sequence of elliptic integrands such that there exist $l_1\ge1, l_2>0$ such that $\Phi_k \in \mathcal{E}(H^+, l_1, l_2)$ for any $k$. Suppose also that $\Phi_k$ converges to an elliptic integrand $\Phi$ in $C^3_{\rm loc}(H \times \S^{n-1})$ as $k\to\infty$.
    
Let $\Lambda, r_0>0$. For any $k\in\N$, let $E_k\subset H$ be a $(\Lambda,r_0)$-almost minimizer of $P_{\Phi_k}$ in $H^+$, and assume that $E_k\to B^\lambda$ in $L^1_{\rm loc}(H)$.
    
Then $\overline{\partial E_k \cap H^+}$ is a hypersurface with boundary of class $C^{1,1/2}$ for $k$ large enough, and $\overline{\partial E_k \cap H^+}\to \overline{\partial B^\lambda \cap H^+}$ in $C^{1,\beta}$ for any $\beta \in(0,1/2)$, in the sense of \cref{def:DistanzaC1}.
\end{theorem}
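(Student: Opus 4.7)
The plan is to apply standard regularity theory for almost minimizers of anisotropic perimeters, first in the interior of the half-space and then up to the free boundary on $\partial H$, and to upgrade $L^1_{\rm loc}$ convergence to $C^{1,\beta}$ convergence of boundaries. The statement is essentially a sequential/compactness version of \cite[Theorem~3.1]{DePhilippisMaggi2015} together with the corresponding sharp H\"older exponent noted in \cite[Remark~3.6]{DePhilippisMaggi2015}, so I will assemble these ingredients in the right order.

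First, I would fix a compact set $K\Subset H$ and work inside $K$. Since $\Phi_k\in\mathcal{E}(H^+,l_1,l_2)$ uniformly in $k$, each $E_k$ is a $(\Lambda,r_0)$-almost minimizer of a uniformly elliptic anisotropic perimeter with bounded Lipschitz dependence on $x$, and hence satisfies uniform density estimates and uniform perimeter bounds inside $K$ (see \cite[Lemma~2.6]{DePhilippisMaggi2015}). By the lower semicontinuity of $P_{\Phi_k}$ and the $L^1_{\rm loc}$ convergence $E_k\to B^\lambda$, these density estimates together with $B^\lambda$ having smooth non-empty reduced boundary inside $H^+$ force $\overline{\partial E_k\cap K}\to \overline{\partial B^\lambda\cap K}$ in Hausdorff distance; moreover, for any $x_0\in\partial B^\lambda\cap H^+$ and $k$ large, $\partial E_k$ intersects every ball $B_r(x_0)$, $r$ small, and fills a tiny fraction of complementary mass bounded away from $0$ and $1$.

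Second, on the interior part $\overline{\partial E_k\cap H^+}$ away from $\partial H$, the classical $\varepsilon$-regularity theorem for almost minimizers of regular elliptic anisotropic perimeters (see \cite{SchoenSimonAlmgren}, or \cite[Theorem~1.9]{DePhilippisMaggi2015} and references therein) applies: smallness of the flatness excess plus the uniform almost minimality gives $C^{1,1/2}$ regularity of $\overline{\partial E_k\cap H^+}$ inside $H^+$ on scales comparable to $1$, with a priori estimates depending only on $n, l_1, l_2, \Lambda, r_0$. Combined with the above Hausdorff convergence, a standard covering argument lets me parametrize $\overline{\partial E_k\cap H^+}$ over compact subsets of $\overline{\partial B^\lambda \cap H^+}$ that are bounded away from $\partial H$ as $C^{1,1/2}$ graphs with uniform norm bounds, and deduce $C^{1,\beta}$ convergence to the analogous parametrization of $\overline{\partial B^\lambda\cap H^+}$ for every $\beta\in(0,1/2)$ by Arzel\`a--Ascoli.

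The main difficulty is the boundary regularity at $\partial B^\lambda\cap \partial H$, which is where the capillarity angle $\lambda$ enters. Here I would invoke \cite[Theorem~3.1]{DePhilippisMaggi2015} directly: the hypotheses $\Phi_k\in\mathcal{E}(H^+,l_1,l_2)$, $(\Lambda,r_0)$-almost minimality, and convergence of $E_k$ to a smooth capillary model $B^\lambda$ with contact angle $\lambda\in(-1,1)$ are exactly the input of that theorem. At each boundary point $x_0\in\partial B^\lambda\cap\partial H$, for $k$ large the flatness excess of $E_k$ together with the closeness of $\Phi_k$ to the limiting integrand is small enough to fall in the $\varepsilon$-regularity regime of \cite[Theorem~3.1]{DePhilippisMaggi2015}, and one obtains a parametrization of $\overline{\partial E_k\cap H^+}$ near $x_0$ as a $C^{1,1/2}$ graph with uniform bounds, where the exponent $1/2$ is the sharp one highlighted in \cite[Remark~3.6]{DePhilippisMaggi2015}. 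A finite covering of $\partial B^\lambda\cap \partial H$ by such boundary neighborhoods, patched with the interior parametrizations of the previous step, produces a global $C^{1,1/2}$ graph description of $\overline{\partial E_k\cap H^+}$ over $\S^{n-1}_+$ with uniformly bounded $C^{1,1/2}$ norm. Arzel\`a--Ascoli and the uniqueness of the limit (identified with $\overline{\partial B^\lambda\cap H^+}$ thanks to the $L^1_{\rm loc}$ convergence) then upgrade this to $C^{1,\beta}$ convergence in the sense of \cref{def:DistanzaC1} for every $\beta\in(0,1/2)$, concluding the proof.
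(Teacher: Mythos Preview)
Your proposal is correct and follows essentially the same approach as the paper: the paper does not give a detailed proof of this theorem but simply states that it follows by ``combining Theorem~3.1 and Remark~3.6 in \cite{DePhilippisMaggi2015}'', which is exactly the core of your argument. Your sketch fills in the standard density-estimates-plus-Hausdorff-convergence-plus-$\varepsilon$-regularity-plus-Arzel\`a--Ascoli reasoning that makes this citation effective, so there is no substantive divergence from the paper.
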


In the second part of the work, we also need to apply an analogous $\eps$-regularity theorem for $(\Lambda, r_0)$-almost minimizers of the perimeter in convex cones. Assuming that the boundary of a convex cone is piecewise $C^2$, one can apply the theory developed in \cite{EdelenLi22} to derive the following result (see also \cite{GruterJost86} for earlier $\eps$-regularity results in the free boundary context).

\begin{theorem}\label{thm:almost-regularity-cones}
Let $\mathcal{C}\subset\R^n$ be a closed convex cone with non-empty interior, and suppose that $ \S^{n-1} \cap \mathcal{C}$ has piecewise $C^2$ boundary. Let $\Lambda\geq0$, $r_0, \eps>0$. Then there exist $\delta_{\ref{thm:almost-regularity-cones}}= \delta_{\ref{thm:almost-regularity-cones}}(\mathcal{C}, \Lambda, r_0, \eps)\in(0,1)$ and $\beta_{\ref{thm:almost-regularity-cones}}=\beta_{\ref{thm:almost-regularity-cones}}(\mathcal{C})\in(0,1/2)$ such that the following holds.

Let $E\subset \mathcal{C}$ be a $(\Lambda,r_0)$-almost minimizer of the perimeter in 
$\interior{\mathcal{C}}$ such that $|E\Delta (B_1 \cap \mathcal{C})| \le \delta_{\ref{thm:almost-regularity-cones}}$. Then there exists $u:\S^{n-1}\cap \mathcal{C}\to(-1,1)$ such that
    \[
        \overline{\partial E\cap \interior{\mathcal{C}}}=\{(1+u(x))x:x\in\S^{n-1}\cap \mathcal{C}\},
    \]
with $\|u\|_{C^{1,\beta_{\ref{thm:almost-regularity-cones}}}}\le \eps$.
%
    
\end{theorem}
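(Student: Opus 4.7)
The plan is to argue by contradiction, using a compactness argument that combines density estimates up to the boundary $\partial\mathcal{C}$ with the free-boundary $\varepsilon$-regularity theory of Edelen--Li as a black box. Suppose the statement fails: then there exist $\Lambda,r_0,\varepsilon>0$ and a sequence $E_k\subset\mathcal{C}$ of $(\Lambda,r_0)$-almost minimizers of the perimeter in $\interior{\mathcal{C}}$ with $|E_k\Delta(B_1\cap\mathcal{C})|\to0$ such that for each $k$, either $\overline{\partial E_k\cap\interior{\mathcal{C}}}$ cannot be written as a radial graph $\{(1+u_k(x))x\st x\in \Sigma\}$ over $\Sigma:=\S^{n-1}\cap\mathcal{C}$, or the resulting $u_k$ satisfies $\|u_k\|_{C^{1,\beta}(\Sigma)}>\varepsilon$, where $\beta=\beta_{\ref{thm:almost-regularity-cones}}(\mathcal{C})\in(0,1/2)$ is to be fixed below.

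First I would establish two-sided density estimates for the sequence. Since $\mathcal{C}$ is convex, the nearest-point projection onto $\mathcal{C}$ is $1$-Lipschitz; testing the almost-minimality of $E_k$ against $E_k\cup(B_\rho(x)\cap\mathcal{C})$ and $E_k\setminus B_\rho(x)$ for $x\in\overline{E_k}\cup\partial E_k$ yields constants $D_0,r_1>0$ depending only on $n,\Lambda,r_0$ with
\[
D_0\rho^n\le |\mathcal{C}\cap E_k\cap B_\rho(x)|\le(\omega_n-D_0)\rho^n\qquad\forall \rho\le r_1,
\]
uniformly in $k$. These estimates together with the $L^1$-convergence $E_k\to B_1\cap\mathcal{C}$ give Hausdorff convergence $\overline{\partial E_k\cap\interior{\mathcal{C}}}\to\overline{\Sigma}$; in particular $\partial E_k$ is uniformly close to $\Sigma$ for $k$ large, and every blow-up limit of $\partial E_k$ at a point of $\Sigma$ is a flat half-hyperplane meeting $\partial\mathcal{C}$ orthogonally (because $\Sigma$ meets each smooth face of $\partial\mathcal{C}$ at a right angle, since the ball is centered at the vertex).

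Next I apply the $\varepsilon$-regularity of \cite{EdelenLi22}. Fix $p\in\overline{\Sigma}$. If $p\in\interior{\mathcal{C}}$, classical interior $C^{1,1/2}$-regularity for almost-minimizers yields a uniform ball around $p$ on which $\partial E_k$ is a small $C^{1,\beta}$-graph over $\S^{n-1}$, with norm vanishing as $k\to\infty$. If $p$ lies in the smooth part of $\partial\mathcal{C}$, the main Edelen--Li theorem (applied to the free boundary $\partial E_k\cap\partial\mathcal{C}$ on the $C^2$-hypersurface $\partial\mathcal{C}$ near $p$) produces, for $k$ large, a neighborhood $U_p$ (of radius depending only on $\Lambda,r_0$ and the local $C^2$-norm of $\partial\mathcal{C}$) in which $\overline{\partial E_k\cap\interior{\mathcal{C}}}$ is a $C^{1,\beta_p}$-graph over the corresponding piece of $\Sigma$ up to the boundary, for a $\beta_p\in(0,1/2)$ determined by $\partial\mathcal{C}$ at $p$, with $C^{1,\beta_p}$-norm tending to $0$ as $k\to\infty$. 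Since the orthogonality of $\Sigma$ and $\partial\mathcal{C}$ is preserved for small perturbations, these local graphs can be re-expressed as radial ones with the same asymptotic regularity and norm.

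The main obstacle is gluing across the lower-dimensional strata of $\partial\mathcal{C}$, where pure boundary $\varepsilon$-regularity is unavailable. To handle this I would appeal to the stratified $\varepsilon$-regularity in \cite{EdelenLi22}, which applies precisely to piecewise $C^2$ free-boundary constraints: away from each stratum, one controls the closest lower-dimensional stratum using the density estimates, and combines the uniform sharp regularity on each smooth piece with the Hausdorff proximity of $\partial E_k$ to $\Sigma$ to show the local graphs obtained above agree continuously on overlaps and on the singular skeleton. Taking $\beta_{\ref{thm:almost-regularity-cones}}(\mathcal{C})$ to be the minimum of the exponents $\beta_p$ produced on each stratum and covering $\overline{\Sigma}$ by finitely many such neighborhoods, one obtains a single $u_k\in C^{1,\beta_{\ref{thm:almost-regularity-cones}}}(\Sigma)$ parametrizing $\overline{\partial E_k\cap\interior{\mathcal{C}}}$ with $\|u_k\|_{C^{1,\beta_{\ref{thm:almost-regularity-cones}}}(\Sigma)}\to0$, contradicting the assumption and determining $\delta_{\ref{thm:almost-regularity-cones}}$ and $\beta_{\ref{thm:almost-regularity-cones}}$.
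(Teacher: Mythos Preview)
Your approach is essentially the same as the paper's: density estimates yield Hausdorff convergence of $\overline{\partial E_k\cap\interior{\mathcal{C}}}$ to $\Sigma$, and then the free-boundary Allard-type regularity of \cite{EdelenLi22} upgrades this to $C^{1,\beta}$-graph convergence. The paper argues directly (for a single $E$, choosing $\delta$ small enough that the excess in \cite[Theorem~3.1]{EdelenLi22} is below threshold) rather than by contradiction, but this is cosmetic.

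The one substantive simplification you are missing is that your stratum-by-stratum case analysis and the subsequent ``gluing across the lower-dimensional strata'' are unnecessary. The main theorem of \cite{EdelenLi22} is stated precisely for varifolds with free boundary in a \emph{curved polyhedral cone domain}: it already handles corner and edge points of $\partial\mathcal{C}$ directly, not just smooth boundary points. The paper exploits this by fixing $p_0\in\partial\mathcal{C}\cap\S^{n-1}$, noting that after translation and dilation $B_2\cap\partial\big(\tfrac{1}{\rho}(\mathcal{C}-p_0)\big)$ is a curved polyhedral cone domain modeled on some fixed $\Omega^{(0)}$ (this is exactly where the piecewise $C^2$ hypothesis enters), verifying the excess hypothesis and the density hypothesis of \cite[Theorem~3.1]{EdelenLi22}, and reading off a local $C^{1,\beta}$ graph with $\beta=\beta(\Omega^{(0)})$. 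A finite cover then globalizes. Your separate treatment of interior, smooth-boundary, and singular points, and the vague gluing step, are thus subsumed by a single black-box application of \cite[Theorem~3.1]{EdelenLi22} at each boundary point.
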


We just give a sketch of the proof, commenting on the main observations leading to the application of the main regularity result from \cite{EdelenLi22}.

\begin{proof}
By compactness and by a covering argument, it is sufficient to prove the claimed $C^{1,\beta}$ bound just in some neighborhood of a point $p_0 \in \partial \mathcal{C} \cap \S^{n-1}$ with $|p_0|=1$. Let $\mathcal{C}_0:= \mathcal{C}-p_0$.

By the density estimates in \cref{thm:quasi-regularity}, it is standard to prove that the Hausdorff distances $d_{\hausdorff}(E, B_1\cap \mathcal{C})$, $d_{\hausdorff}(\overline{\partial E\cap \interior{\mathcal{C}}}, \S^{n-1} \cap \mathcal{C})$ are bounded from above by a dimensional power of $|E\Delta (B_1 \cap \mathcal{C})|$, up to multiplicative constant depending on $\Lambda, r_0, n, |B_1\cap \mathcal{C}|$.

By assumption, there exists $r_1>0$ such that for any $\rho \in(0,r_1]$ the set $B_2(0) \cap \partial \left( \tfrac{1}{\rho}\mathcal{C}_0 \right)$ is the boundary of a curved polyhedral cone domain modeled on some polyhedral cone domain $\Omega^{(0)}$ (independent of $\rho$) and belonging to the class $\mathcal{D}_{\frac12 \delta({\Omega^{(0)}})}(\Omega^{(0)})$, where $ \delta({\Omega^{(0)}})$ is given by \cite[Theorem 3.1]{EdelenLi22} and the terminology is as in \cite[Definition 2.1, Definition 2.5]{EdelenLi22}.

Moreover, notice that $M:= \partial^*E \cap \interior{\mathcal{C}}$ defines an integral varifold in $B_2$ with free boundary in $\interior{\mathcal{C}}$, such that furthermore $\|H_{M}^{\rm tan}\|_\infty \le \Lambda$, where notation and terminology is as in \cite[Section 2.4]{EdelenLi22}. In fact, such a claim is well-known and it follows, e.g., by an adaptation of \cite[Lemma A.2]{PascalePozzettaQuantitative}.
Hence, there is $r_2$, depending on $\Lambda$, such that for any $\rho\in(0,r_2]$, calling $M_\rho:= \tfrac{1}{\rho}(M - p_0)$, then $\|H_{M_{\rho}}^{\rm tan}\|_\infty \le \rho\Lambda \le \tfrac12 \delta({\Omega^{(0)}})$.

Taking into account all the previous observations and since $\S^{n-1} \cap \mathcal{C}$ is smooth (with piecewise $C^2$ boundary), it follows that for any $\eta$ we can fix $\delta_{\ref{thm:almost-regularity-cones}}= \delta_{\ref{thm:almost-regularity-cones}}(\mathcal{C}, \Lambda, r_0, \eta)$ and $\rho\le \min\{r_0,r_1,r_2\}$ such that the excess $E$ of the varifold $M_\rho$ considered in the Allard-type regularity theorem \cite[Theorem 3.1]{EdelenLi22} satisfies
\[
E \le \eta \le \delta({\Omega^{(0)}})^2,
\]
which yields assumption (3.1) in \cite[Theorem 3.1]{EdelenLi22}.\\
On the other hand, it follows by an easy contradiction argument that, if $\delta_{\ref{thm:almost-regularity-cones}}, \rho$ are possibly chosen smaller enough depending on $\mathcal{C}$, then $M_\rho$ also satisfies assumption (3.2) in \cite[Theorem 3.1]{EdelenLi22}. Hence we can apply \cite[Theorem 3.1]{EdelenLi22}, which implies the existence of a $C^{1, \beta_{\ref{thm:almost-regularity-cones}}}$ function $u_{p_0}$ defined in a neighborhood of $p_0$ in $\S^{n-1} \cap \mathcal{C}$ that parametrizes $\overline{\partial E \cap \interior{\mathcal{C}}}$ in a neighborhood of $p_0$. Moreover, by \cite[Theorem 3.1]{EdelenLi22}, there is $\beta_{\ref{thm:almost-regularity-cones}}\in(0,1)$ such that
\[
\|u_{p_0}\|_{C^{1,\beta_{\ref{thm:almost-regularity-cones}}}}\le C(n, c(\Omega^{(0)})) E^{\frac12} \le C(n, c(\Omega^{(0)})) \eta^{\frac12},
\]
where $c(\Omega^{(0)})$ is given by \cite[Theorem 3.1]{EdelenLi22}. Choosing $\eta = \eps^2/C(n, c(\Omega^{(0)}))^2$, the statement follows.
\end{proof}

\subsection{Ekeland's variational principle}

In the work, we also exploit the following version of the Ekeland's variational principle.

\begin{theorem}[{Ekeland's variational principle \cite[Corollary 1.4.2]{ZalinescuBOOK}}]\label{thm:Ekeland}
Let $(\mathbb{X},\d)$ be a complete metric space and let $f:\mathbb{X}\to (-\infty,+\infty]$ be a lower semicontinuous function bounded from below. Let $\eps>0$ and let $x_0\in \mathbb{X}$ be such that $f(x_0) \le  \eps + \inf_{\mathbb{X}} f$. Then, for any $s>0$ there exists $v \in \mathbb{X}$ such that
\begin{equation*}
    f(v) \le f(x_0) ,
    \qquad
    f(v) \le f(x) + \frac{\eps}{s} \d(x,v) \quad  \forall\, x \in \mathbb{X}.
\end{equation*}
\end{theorem}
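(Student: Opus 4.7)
The plan is to deduce \cref{cor:barycentric-ineq-cones} from \cref{thm:ConesStrongQuantitative} by mimicking the strategy used for the capillarity analogues \cref{cor:tilde-bar-quantitative,cor:bar-quantitative,cor:barycentric-Fraenkel}. Both inequalities in the statement are invariant under translations along the $\R^m$ factor (since $\mathcal{C}=\R^m\times\widetilde{\mathcal{C}}$, $P_{\mathcal{C}}$ and $|\cdot|$ are translation invariant, and the translation maps the normalized center $x^*$ to the origin). Hence, setting $\tilde E:=E-x^*$, the first inequality reduces to showing that
\[
|\tilde E\symmdiff K_{\mathcal{C}}|^2+\mu_{\mathcal{C},0}^2(\tilde E)\le C\, D_{\mathcal{C}}(\tilde E)
\]
for every set of finite perimeter $\tilde E\subset\mathcal{C}$ with $|\tilde E|=|K_{\mathcal{C}}|$ and $\widetilde b_{\R^m}(\tilde E)=0$. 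I will prove each of the two summands on the left hand side separately by contradiction.

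For the oscillation term, I would assume a sequence $U_k\subset\mathcal{C}$ with $|U_k|=|K_{\mathcal{C}}|$ and points $x^*_k\in\R^m$ such that $\widetilde b_{\R^m}(U_k-x^*_k)=0$ and $k^2D_{\mathcal{C}}(U_k)<\int_{\partial^*U_k\cap\interior{\mathcal{C}}}\bigl|\nu_{U_k}-\tfrac{x-x^*_k}{|x-x^*_k|}\bigr|^2 \de\hausdorff^{n-1}$. Translating by $-x^*_k$ and using the change of variable $x\mapsto x-x^*_k$, the translated sets $E_k:=U_k-x^*_k$ satisfy $\widetilde b_{\R^m}(E_k)=0$, $|E_k|=|K_{\mathcal{C}}|$, and $k^2D_{\mathcal{C}}(E_k)<\mu_{\mathcal{C},0}^2(E_k)$. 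This is exactly the starting point of the contradiction argument in the proof of \cref{thm:ConesStrongQuantitative}; repeating verbatim the subsequent steps (applying \cref{prop:selection-cones} to produce auxiliary minimizers $F_k$ converging to $K_{\mathcal{C}}$ in $C^1$, testing minimality of $F_k$ against $E_k$ via \cref{lem:ProblemCone+LambdaVolumi}, and closing the loop with the Fuglede estimate \cref{prop:FugledeConi}) yields the desired contradiction.

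For the Fraenkel term, the same reduction leads us to assume a sequence of translated sets $E_k$ with $\widetilde b_{\R^m}(E_k)=0$, $|E_k|=|K_{\mathcal{C}}|$, and $k^2D_{\mathcal{C}}(E_k)<|E_k\symmdiff K_{\mathcal{C}}|^2$. One would then run a variant of the selection principle in \cref{prop:selection-cones} in which the oscillation term $-\tfrac{1}{k}\mu_{\mathcal{C},0}^2(F)$ in the auxiliary functional $\mathcal{G}_k$ is replaced by the pure volume term $-\tfrac{1}{k}|F\symmdiff K_{\mathcal{C}}|$. As remarked in \cref{cor:barycentric-Fraenkel}, since the Fraenkel asymmetry behaves as a bounded volume perturbation of $P_{\mathcal{C}}+\Lambda\bigl||F|-|K_{\mathcal{C}}|\bigr|$, every step of \cref{prop:selection-cones} (Ekeland-based quasi-minimality, density estimates from \cref{thm:quasi-regularity}, $L^1$-convergence to $K_{\mathcal{C}}$ via \cref{thm:QuantitativeConesFraenkel}, and the $\eps$-regularity result \cref{thm:almost-regularity-cones}) goes through with strictly simpler arguments, and produces the same class of $C^{1,\beta}$-regular minimizers $F_k$. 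The Fuglede-type comparison needed for the final contradiction is simpler as well: for $\overline{\partial E\cap\interior{\mathcal{C}}}=\{(1+u(x))x:x\in\Sigma_{\mathcal{C}}\}$ with $\|u\|_{C^1}\ll 1$, the volume expansion analogous to \eqref{eq:volumeCone} and the Cauchy--Schwarz inequality give $|E\symmdiff K_{\mathcal{C}}|\lesssim\|u\|_{L^1(\Sigma_{\mathcal{C}})}\le C\|u\|_{H^1(\Sigma_{\mathcal{C}})}$, replacing the role of the first inequality in \cref{prop:FugledeConi}, after which the second half of \cref{prop:FugledeConi} is invoked unchanged.

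To deduce the second inequality, I would re-run the whole proof above replacing, throughout the definitions of $\widetilde b_{\R^m}$ in \eqref{eq:DefBaricentroTILDEconi} and in the auxiliary functional $\mathcal{G}_k$, the truncation $\psi$ of \eqref{eq:DefPSI} by the rescaled truncation $\psi_d(t):=d\,\psi(t/d)$. This ensures that, for every set $E\subset\mathcal{C}$ with $\diam_{\R^m}E\le d$, the condition $\widetilde b_{\R^m,d}(E-x^*)=0$ forces $x^*=\barycenter_{\R^m}E$, so the inequality just proven specializes to the claim with the actual barycenter. Every constant produced along the selection principle and in the Fuglede estimate now depends on $d$ through the Lipschitz and boundedness properties of $\psi_d$, which accounts for the additional dependence of $C_{\ref{cor:barycentric-ineq-cones}}'$ on $d$. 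The main technical obstacle is ensuring that the Fraenkel variant of the selection principle produces the analogue of the density estimates and the $C^{1,\beta}$-convergence uniformly in $k$; this is where one must verify that the modified functional is still lower semicontinuous and that the resulting minimizers are $(\Lambda',r')$-almost minimizers of the perimeter in $\mathcal{C}$ with uniform parameters, so that \cref{thm:almost-regularity-cones} can be applied. As outlined in \cref{cor:barycentric-Fraenkel}, this verification is only a simplified version of the arguments already carried out in \cref{prop:selection-cones}.
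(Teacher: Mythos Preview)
Your proposal does not address the stated theorem at all. The statement under consideration is \cref{thm:Ekeland}, Ekeland's variational principle on a complete metric space, which the paper does not prove but merely cites from \cite[Corollary 1.4.2]{ZalinescuBOOK}. Instead, what you have written is a proof sketch for \cref{cor:barycentric-ineq-cones}, the barycentric strong quantitative inequality in convex cones. These are entirely different results: one is a general abstract variational principle about lower semicontinuous functions on complete metric spaces, the other is a specific geometric inequality for sets in cones.

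If your intention was indeed to prove \cref{cor:barycentric-ineq-cones}, then your outline is essentially the same as what the paper indicates in \cref{sec:BarycentricCapillarity} and the discussion surrounding \cref{cor:barycentric-ineq-cones}: reduce by translation to sets with vanishing $\widetilde b_{\R^m}$, rerun the contradiction argument of \cref{thm:ConesStrongQuantitative} for the oscillation term, and for the Fraenkel term replace $\mu_{\mathcal{C},0}^2$ in the auxiliary functional by the volume term $|F\symmdiff K_{\mathcal{C}}|$, noting that this only simplifies the regularity analysis. The rescaling $\psi_d$ to pass to the genuine barycenter is also exactly what the paper prescribes. But none of this has any bearing on \cref{thm:Ekeland}.
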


\section{Technical toolbox}\label{AppendixB}

In this appendix, we collect some elementary technical identities and estimates.

\begin{lemma}\label{lemma:formulas}
    Let $f:\S_+^{n-1}\to (0,+\infty)$ be a $C^1$ function, and let $F:\S_+^{n-1}\to\R^n$ be the embedding $F(x) = f(x)x$. 
    Denote $E := \{ x \in H \st |x| < f(x)\}$.\\    
    Then the tangential Jacobian of $F$ is given by
    \[
        J^T F = f^{n-1} \sqrt{1+\frac{|\grad f|^2
        }{f^2}}.
    \]
    In particular
    \[
    \int_{\partial E \cap H^+} g \de \hausdorff^{n-1} = \int_{\S^{n-1}_+} g(f(x) x) \,  f^{n-1} \sqrt{1+\frac{|\grad f|^2
        }{f^2}} \de \hausdorff^{n-1}(x),
    \]
    for any continuous function $g:\R^n\to\R^n$.\\
    The outer unit normal $\nu_E$ of $E$ is given by
    \[
    \nu_E(f(x)x) = \frac{f(x)x - \nabla f (x)}{( f^2(x) + |\nabla f (x)|^2)^{\frac12}},
    \]
    where $\nabla f$ is the tangential gradient of $f$ on $\S^{n-1}_+$, for any $x \in \S^{n-1}_+$.\\
    Moreover
    \[
    \int_E g = \int_{\S_+^{n-1}}\int_0^{f(x)} g(r x) \, r^{n-1}\de r \de \hausdorff^{n-1}(x) ,
    \]
    for any continuous function $g:\R^n\to\R^n$. In particular
    \[
    |E| = \int_{\S_+^{n-1}}\frac{f^n}{n}\de \hausdorff^{n-1}.
    \]
    If also $f=w_{\lambda}+u$, with $u:\S_+^{n-1}\to\R$ of class $C^1$ such that $f>0$ on $\S^{n-1}_+$, then
    \[
    \begin{split}
        P_{\lambda}(E) &= \int_{\S_+^{n-1}} (1-\lambda\Braket{\nu_E,e_n})f^{n-1}\sqrt{1+\frac{|\grad f|^2}{f^2}}\de \hausdorff^{n-1}\\
        &=\int_{\S_+^{n-1}}(1-\lambda\Braket{\nu_E,e_n})(w_{\lambda}+u)^{n-1}\sqrt{1+\frac{|\grad w_{\lambda}+\grad u|^2}{|w_{\lambda}+u|^2}}\de \hausdorff^{n-1},
    \end{split}
    \]
    and
    \begin{equation}\label{eq:VolumeExpansion}
    \begin{split}
        |E| &= \int_{\S_+^{n-1}}\left[\frac{w_{\lambda}^n}{n}+w_{\lambda}^{n-1}u + \frac{n-1}{2}w_{\lambda}^{n-2}u^2+O(u^3)\right]\de \hausdorff^{n-1}(x),
    \end{split}
    \end{equation}
    as $u\to0$.
\end{lemma}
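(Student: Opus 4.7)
All identities are standard consequences of the area/coarea formulas combined with a direct computation of the differential of the radial parametrization $F(x) = f(x)\,x$. I would organize the proof in four blocks corresponding to the four displayed identities, then deduce the remaining consequences for $P_{\lambda}(E)$ and $|E|$.

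First, I fix an orthonormal frame $\{\tau_1,\dots,\tau_{n-1}\}$ of $T_x\S^{n-1}_+$ and differentiate $F$: since $x \perp \tau_i$, we get $dF_x(\tau_i) = (\partial_i f)\, x + f\,\tau_i$. The Gram matrix of $dF$ is therefore
\[
g_{ij} = \langle dF(\tau_i), dF(\tau_j)\rangle = f^2\,\delta_{ij} + (\partial_i f)(\partial_j f),
\]
which, by the matrix determinant lemma, has determinant $f^{2(n-1)} + f^{2(n-2)}|\nabla f|^2$. Taking the square root yields $J^T F = f^{n-1}\sqrt{1 + |\nabla f|^2/f^2}$, and the area formula then gives the integral representation $\int_{\partial E\cap H^+} g\, d\hausdorff^{n-1}$. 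This is the only slightly computational step; the matrix determinant lemma makes it immediate.

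For the normal vector I verify directly that $v := f\,x - \nabla f$ is orthogonal to each $dF(\tau_i)$: using $\langle x,\tau_i\rangle=0$ and $\langle \nabla f,\tau_i\rangle = \partial_i f$,
\[
\langle v, (\partial_i f)x + f\tau_i\rangle = f\,\partial_i f - f\,\partial_i f = 0.
\]
Since $|v|^2 = f^2 + |\nabla f|^2$, normalizing gives the claimed formula for $\nu_E$. Outward orientation follows because $\langle v/|v|, x\rangle = f/\sqrt{f^2+|\nabla f|^2} > 0$, and $E$ is star-shaped in the radial direction at $f(x)x$.

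For the volume identity I apply the standard polar coordinate change of variables on $H^+$: writing $y = r\,x$ with $r>0$ and $x\in\S^{n-1}_+$, one has $dy = r^{n-1}\,dr\,d\hausdorff^{n-1}(x)$, and the radial slice of $E$ in direction $x$ is exactly $\{r\,x : 0 < r < f(x)\}$. Integrating $g$ and then specializing $g\equiv 1$ gives the two stated formulas.

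Finally, the explicit formulas for $P_{\lambda}(E)$ follow by plugging the identity $P_{\lambda}(E) = \int_{\partial^*E\cap H^+}(1-\lambda\langle\nu_E,e_n\rangle)\, d\hausdorff^{n-1}$ from \cref{rem:PlambdaConDivergenza} into the area formula just obtained, with $f = w_\lambda + u$. The volume expansion \eqref{eq:VolumeExpansion} follows from the formula $|E| = \int_{\S^{n-1}_+} f^n/n$ by a pointwise Taylor expansion of $(w_\lambda+u)^n$ around $w_\lambda$, where the $O(u^3)$ remainder is uniform on $\S^{n-1}_+$ because $w_\lambda$ is bounded there. No step presents a genuine obstacle; the only care needed is the bookkeeping of the matrix determinant computation, which is why I would write that step out first and then treat the remaining identities as consequences.
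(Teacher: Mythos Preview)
Your proposal is correct and follows essentially the same approach as the paper: both compute the Gram matrix of $dF$ (the paper via the adjoint $\iota^*$ and the rank-one perturbation $f^2\,\Id + \nabla f\otimes\nabla f$, you via an explicit orthonormal frame and the matrix determinant lemma), then verify orthogonality of $fx-\nabla f$ to the tangent vectors and check the outward orientation by the sign of $\langle \nu_E,x\rangle$. The remaining formulas are treated as direct consequences in both versions.
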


\begin{proof}
We just need to prove formulas for the tangential Jacobian of $F$ and for the unit normal $\nu_E$.
Denote $\Sigma = F(\S^{n-1}_+) = \overline{\partial E \cap H^+}$.
The differential $\d F_x: T_x\S^{n-1}_+ \to T_{F(x)}\Sigma$ of $F$ is given by
\[
\d F_x(v) = f(x) v + \Braket{\nabla f(x), v}x = \left( f(x) \iota + x \otimes \nabla f(x)
\right) v,
\]
where $\iota:T_x\S^{n-1}_+\hookrightarrow\R^n$ is the natural injection. Hence
\[
\begin{split}
    J^TF
    &= |\det ( (\d F_x)^* \circ \d F_x )|^{\frac12}
    = \left| \det \left( 
    ( f(x) \iota^* + \nabla f(x) \otimes x )
    (f(x) \iota + x \otimes \nabla f(x) )
    \right) \right|^{\frac12}.
\end{split} 
\]
The adjoint $\iota^*: \R^n \to T_x\S^{n-1}_+$ is the projection onto $T_x\S^{n-1}_+$, indeed
\[
\Braket{\iota^* w, v} = \Braket{w, \iota v} = \Braket{w, v}
=\begin{cases}
    \Braket{w,v} & w \in T_x\S^{n-1}_+, \\
    0 & w \in (T_x\S^{n-1}_+)^\perp,
\end{cases}
\]
for any $w \in \R^n$, $v \in T_x\S^{n-1}_+$. Therefore
\begin{equation*}
    \begin{split}
        J^TF
    &=\left| \det \left( 
    f^2(x)  \iota^* \iota + f(x) \, \iota^* \, x\otimes\nabla f(x) + f(x) \nabla f(x) \otimes x \, \iota + \Braket{x,x} \nabla f(x) \otimes \nabla f(x)
    \right) \right|^{\frac12}\\
    &= \left| \det \left( 
    f^2(x) \, \Id_{T_x\S^{n-1}_+} +  \nabla f(x) \otimes \nabla f(x)
    \right) \right|^{\frac12}\\
    & = \left| (f^2(x) + |\nabla f(x)|^2 ) (f^2(x))^{n-2}) \right|^{\frac12}.
    \end{split}
\end{equation*}

We now prove the formula for $\nu_E$. It is sufficient to prove that $f(x)x- \nabla f(x)$ is orthogonal to $T_{F(x)}\Sigma$ for any $x \in \S^{n
-1}_+$. Indeed, for any $v \in T_x\S^{n-1}_+$ we have
\[
\begin{split}
\Braket{\d F_x(v), f(x)x- \nabla f(x)}
&= \Braket{f(x) v + \Braket{\nabla f(x), v}x, f(x)x- \nabla f(x)} =0.
\end{split}
\]
Also, $\nu_E$ points outwards because $\Braket{\nu_E(F(x)), x}>0$ since $f(x)>0$ for any $x \in \S^{n-1}_+$.
\end{proof}

\begin{lemma}[Nash-type inequality]\label{lemma:NashIneq}
Let $\sigma_0>0$ and $n\ge 2$. Then there exists $C_{\ref{lemma:NashIneq}}(n,\sigma_0)>0$ and $\alpha_n>1$ such that the following holds.

Let $\Sigma\subset \S^{n-1}_+$ be a closed geodesically convex set with $\hausdorff^{n-1}(\Sigma)\ge \sigma_0$. Then for any $\delta\in(0,1)$ and any $u \in {\rm Lip}(\Sigma)$ there holds
\begin{equation*}
    \int_{\Sigma} u^2 \le \delta \int_\Sigma |\nabla u|^2 + \frac{C_{\ref{lemma:NashIneq}}}{\delta^{\alpha_n}} \left( \int_\Sigma |u| \right)^2.
\end{equation*}
\end{lemma}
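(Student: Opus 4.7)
The plan is to deduce the stated inequality from a uniform Sobolev--Poincar\'e estimate on $\Sigma$, combined with interpolation between $L^1$ and an intermediate $L^p$ norm, and then Young's inequality to separate powers of $\|\grad u\|_{L^2}$ and $\|u\|_{L^1}$ with weight $\delta$. The first ingredient is the existence of an exponent $p=p(n)>2$ and a constant $C_1=C_1(n,\sigma_0)>0$ such that, for every $u \in \mathrm{Lip}(\Sigma)$,
\begin{equation*}
\|u - \bar u\|_{L^p(\Sigma)} \le C_1\|\grad u\|_{L^2(\Sigma)}, \qquad \bar u := \tfrac{1}{\hausdorff^{n-1}(\Sigma)} \textstyle\int_\Sigma u\,\de\hausdorff^{n-1}.
\end{equation*}
For $n\ge 4$ one chooses $p = 2(n-1)/(n-3)$ (the Sobolev conjugate on a $(n-1)$-dimensional domain); for $n=3$ any $p\in (2,+\infty)$ works, and for $n=2$ one may take $p=+\infty$ via Morrey embedding. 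Combining with the trivial bound $|\bar u|\le \|u\|_{L^1(\Sigma)}/\sigma_0$ and the triangle inequality upgrades this to
\begin{equation*}
\|u\|_{L^p(\Sigma)} \le C_1\|\grad u\|_{L^2(\Sigma)} + C_2\|u\|_{L^1(\Sigma)},
\end{equation*}
for some $C_2=C_2(n,\sigma_0)$.

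Setting $\theta := (p-2)/(2(p-1)) \in (0,1/2)$, H\"older's interpolation inequality $\|u\|_{L^2(\Sigma)} \le \|u\|_{L^1(\Sigma)}^{\theta}\,\|u\|_{L^p(\Sigma)}^{1-\theta}$, squared and combined with the previous display, yields
\begin{equation*}
\|u\|_{L^2(\Sigma)}^2 \le C_3\,\|u\|_{L^1(\Sigma)}^{2\theta}\bigl(\|\grad u\|_{L^2(\Sigma)}^{2(1-\theta)} + \|u\|_{L^1(\Sigma)}^{2(1-\theta)}\bigr).
\end{equation*}
Applying Young's inequality with conjugate exponents $1/(1-\theta)$ and $1/\theta$ to the first summand, with weight parameter $\delta$, gives
\begin{equation*}
C_3\|u\|_{L^1(\Sigma)}^{2\theta}\|\grad u\|_{L^2(\Sigma)}^{2(1-\theta)} \le \delta \|\grad u\|_{L^2(\Sigma)}^2 + \frac{C_4}{\delta^{\alpha_n}}\|u\|_{L^1(\Sigma)}^2, \qquad \alpha_n := \frac{1-\theta}{\theta} = \frac{p}{p-2},
\end{equation*}
so in particular $\alpha_n=(n-1)/2$ for $n\ge 4$, and one may arrange $\alpha_n>1$ in every dimension. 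Since $\delta<1$, the auxiliary term $C_3\|u\|_{L^1(\Sigma)}^2$ is absorbed into $(C_4/\delta^{\alpha_n})\|u\|_{L^1(\Sigma)}^2$ at the cost of enlarging the constant, producing the claimed inequality.

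The main obstacle I foresee is justifying that the Sobolev--Poincar\'e constant $C_1$ is uniform over the class of closed geodesically convex $\Sigma \subset \S^{n-1}_+$ with $\hausdorff^{n-1}(\Sigma) \ge \sigma_0$. Any such $\Sigma$ has intrinsic diameter at most $\pi$ (being contained in a hemisphere) and area at least $\sigma_0$, so one expects uniformly controlled geometry. Concretely, centering a gnomonic (or exponential) chart at an interior point $p_0\in \Sigma$ whose geodesic distance from $\partial \S^{n-1}_+$ is bounded below in terms of $n$ and $\sigma_0$, one transfers the inequality to a bounded convex subset $U\subset \R^{n-1}$ of diameter and volume controlled only by $n$ and $\sigma_0$; the classical Sobolev--Poincar\'e inequality for convex Euclidean domains then applies with a constant depending only on $n$ and $\sigma_0$. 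Alternatively, a compactness-contradiction argument on the Hausdorff-closed class of admissible $\Sigma$'s delivers the same conclusion. Once this geometric input is in place, the interpolation and Young's step above are entirely routine.
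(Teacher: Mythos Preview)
Your interpolation architecture (Sobolev/Poincar\'e, then H\"older between $L^1$ and $L^p$, then Young with weight $\delta$) is exactly the route the paper takes, with only cosmetic differences: the paper uses the full Sobolev inequality $\|u\|_{L^{2k/(k-2)}}^2 \le C(\|u\|_{L^2}^2+\|\nabla u\|_{L^2}^2)$ rather than a Poincar\'e version, which produces an extra $\|u\|_{L^2}^2$ term to be absorbed, and yields $\alpha_n=k/2$ (with $k=n$ for $n\ge4$ and $k>2$ fixed for $n=2,3$) instead of your $\alpha_n=p/(p-2)$. Your remark that one must avoid the endpoint $p=\infty$ for $n=2$ to ensure $\alpha_n>1$ is correct and matches the paper's choice $k_0>2$.

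The genuine gap is in the geometric input, which you rightly flag as the main obstacle but do not resolve. The gnomonic chart idea fails as stated: nothing prevents $\Sigma$ from meeting the equator $\partial\S^{n-1}_+$ (indeed $\Sigma=\S^{n-1}_+$ itself is admissible), and then any gnomonic projection centered at a point of $\S^{n-1}_+$ sends $\Sigma$ to an unbounded set with unbounded metric distortion, so you cannot reduce to a bounded Euclidean convex domain with controlled constants. The compactness-contradiction alternative is also incomplete: Hausdorff convergence of convex domains does not immediately give upper semicontinuity of Sobolev--Poincar\'e constants, so you would need a separate stability argument. The paper handles this uniformly by treating $\Sigma$ as an Alexandrov space with curvature $\ge 1$, volume $\ge\sigma_0$, and diameter $\le\pi$; it then invokes Rajala's Poincar\'e inequality on such spaces together with the Haj\l asz--Koskela machinery (doubling plus Poincar\'e implies Sobolev) to obtain the Sobolev inequality with a constant depending only on $n$ and $\sigma_0$. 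Once that input is secured, your algebra goes through verbatim.
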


\begin{proof}
The proof is well-known to the experts. For the convenience of the reader, we present a classical argument outlined by \cite{Humbert01Nash}.
Fix $k_0 >2$, define
\[
k:= \begin{cases}
    k_0 & \text{ if }n=2,3, \\
    n & \text{ if } n \ge 4.
\end{cases}
\]
It is well-known that a convex subset such as $\Sigma$ enjoys a Sobolev inequality of the following form. There exists $C_{\rm Sob}=C_{\rm Sob}(n,\sigma_0)>0$ such that
\begin{equation}\label{eq:SobolevSuConvessi}
    \|u\|_{L^{\frac{2k}{k-2}}(\Sigma)}^2 \le C_{\rm Sob} ( \|u\|_{L^2(\Sigma)}^2 + \|\nabla u\|_{L^2(\Sigma)}^2 ) ,
\end{equation}
for any $u \in {\rm Lip}(\Sigma)$. Indeed, if $n=2$ then $\Sigma$ is isometric to an interval having length in the interval $[\sigma_0, \pi]$. 
Instead, if $n\ge 3$, if $\partial\Sigma$ is smooth, then $\Sigma$ is a smooth manifold with boundary, Ricci curvature bounded below by $(n-2)$, and volume bounded below by $\sigma_0$. Then \eqref{eq:SobolevSuConvessi} follows from classical Sobolev embeddings keeping track of the dependence of constants (see, e.g., proofs in \cite{HebeyNonlinearAnalysis, ChavelBook}).
More generally, if $\partial\Sigma$ is not a smooth submanifold, it is well-known than $\Sigma$ is an Alexandrov space with positive curvature \cite{AlexanderKapovitchPetrunin}, also having diameter and volume bounded both from above and from below away from zero in terms of $\sigma_0$. Also, the volume measure on $\Sigma$ is doubling and Ahlfors regular, i.e., volumes of relative balls in $\Sigma$ are comparable to the $(n-1)$-th power of the radius, with constants depending on $n, \sigma_0$ only. Hence $\Sigma$ supports a $p$-Poincaré inequality in the sense of \cite[Eq. (5)]{HajlaszKoskela} with
\begin{equation*}
    p= \begin{cases}
        k_0' & \text{ if } n=3,\\
        2 & \text{ if } n\ge 4,
    \end{cases}
\end{equation*}
where $\tfrac{1}{k_0'}=1-\tfrac{1}{k_0}$, and with a Poincaré constant depending only on the dimension \cite{RajalaPoincare}. Hence \eqref{eq:SobolevSuConvessi} follows from \cite[Theorem 5.1, Eq. (22)]{HajlaszKoskela} applied with $s=n$ and $r=\pi$.

Applying the H\"{o}lder inequality with exponents $p=\tfrac{k}{4} + \tfrac12 >1$ and $p'=1+\tfrac{4}{k-2}$, we estimate
\begin{equation*}
\begin{split}
\left(\int_\Sigma u^2 \right)^{1+\frac2k} 
&= \left(\int_\Sigma |u|^{\frac1p} |u|^{2-\frac1p} \right)^{1+\frac2k} 
\le
\left( \int_\Sigma |u| \right)^{\frac4k}
\left( \int_\Sigma |u|^{\frac{2k}{k-2}} \right)^{\frac{k-2}{k}} 
\\
&\overset{\eqref{eq:SobolevSuConvessi}}{\le} 
C_{\rm Sob}  \left( \int_\Sigma |u| \right)^{\frac4k}  \left(\int_\Sigma u^2 + \int_\Sigma |\nabla u|^2 \right) .
\end{split}
\end{equation*}
Applying the Young inequality with exponents $q=1+\tfrac2k$ and $q'=1+\tfrac{k}{2}$, we find
\begin{equation*}
\begin{split}
\left(\int_\Sigma u^2 \right)^{1+\frac2k} 
&\leq \eta^{1+\frac2k} \left(\int_\Sigma |\nabla u|^2 \right)^{1+\frac2k} + \frac{C(n,\sigma_0)}{\eta^{1+\tfrac{k}{2}}}  \left(\int_\Sigma |u| \right)^{2+\frac4k} + \eta^{1+\tfrac{2}{k}}\left(\int_\Sigma u^2 \right)^{1+\frac2k} ,
\end{split}
\end{equation*}
for any $\eta>0$, for some constant $C(n,\sigma_0)>0$.  Choosing $\eta=\eta(n,\sigma_0,\delta)$ sufficiently small, we can absorb the summand $\eta^{1+\tfrac{2}{k}}\Big(\int_\Sigma u^2 \Big)^{1+\frac2k} $ on the left hand side. Eventually raising the inequality to the power $1/(1+\tfrac2k) <1$ the statement follows with $\alpha_n:= \tfrac{k}{2}$.
\end{proof}

\bigskip
\begin{center}
\textsc{Acknowledgements} 
\end{center}

This project was partially developed while D.C. was in Napoli, and he whishes to thank the Mathematics department ``Renato Caccioppoli'' at Università degli Studi di Napoli ``Federico II'' for the hospitality.
M.P. is grateful to Kenneth DeMason for many useful conversations around the topic of the work, to Guido De Philippis for suggesting reference \cite{EdelenLi22}, and to Nicola Fusco for pointing out useful references. 

The authors are members of INdAM - GNAMPA.
The second and third authors have been partially supported by INdAM - GNAMPA Project 2025 \emph{Flussi geometrici su variet\`a: criteri di esistenza e applicazioni geometriche} (Code CUP \#E5324001950001\#).

This research was funded in part by the Austrian Science Fund (FWF) [grant DOI \href{https://www.fwf.ac.at/en/research-radar/10.55776/EFP6}{10.55776/EFP6}]. For open access purposes, the authors have applied a CC BY public copyright license to any author-accepted manuscript version arising from this submission.

\bigskip

\noindent\textbf{Conflicts of interest.} The authors declare no conflicts of interest.

\noindent\textbf{Data availability statement.} Not applicable.

\sloppy
\printbibliography[title={References}]
\end{document}